\newcommand{\id}{\mathsf{id}} 
\newcommand{\cod}{\mathrm{codim\,}} 
\newcommand{\Lie}{\mathbf{Lie}} 
\newcommand{\Mon}{\mathrm{Mon}} 
\newcommand{\del}[1]{{\tfrac{\partial}{\partial #1 }}} 
\newcommand{\Tr}{\mathrm{Tr}} 
\newcommand{\PP}{\mathbb{P}} 
\newcommand{\RR}{\mathbb{R}} 
\newcommand{\ZZ}{\mathbb{Z}} 
\newcommand{\NN}{\mathbb{N}} 
\newcommand{\Aa}{\mathcal{A}} 
\newcommand{\Ff}{\mathcal{F}} 
\newcommand{\Gg}{\mathcal{G}} 
\newcommand{\Hh}{\mathcal{H}} 
\newcommand{\Oo}{\mathcal{O}}
\newcommand{\Nn}{\mathcal{N}} 
\newcommand{\Kk}{\mathcal{K}} 
\newcommand{\Gpd}{\mathbf{Gpd}} 
\newcommand{\Lint}{\mathbf{Gpd}} 
\newcommand{\Lnorm}{\boldsymbol\Lambda}
\newcommand{\Del}{\partial}
\newcommand{\til}[1]{\widetilde{#1}}
\newcommand{\defw}[1]{\emph{#1}}
\newcommand{\blowup}[2]{\mathsf{Bl}_{#2}(#1)}
\newcommand{\BGpd}[2]{[#1\!:\!#2]}
\newcommand{\BAlg}[2]{[#1\!:\!#2]} 
\newcommand{\tot}{\mathrm{tot}} 
\newcommand{\Sym}{\mathrm{Sym}}
\newcommand{\sD}{\texttt{D}}
\newcommand{\sV}{\texttt{V}}
\newcommand{\sE}{\texttt{E}}
\newcommand{\sH}{\texttt{H}}
\newcommand{\Pair}{\mathrm{Pair}}
\newcommand{\At}{\mathbf{At}}
\newcommand{\Hol}{\mathcal{H}\!{\mathit{ol}}}
\newcommand{\R}{\mathbf{R}}
\newcommand{\fF}{\mathfrak{f}}
\newcommand{\gG}{\mathfrak{g}}
\newcommand{\hH}{\mathfrak{h}}
\newcommand{\kK}{\mathfrak{k}}
\newcommand{\rra}{\rightrightarrows}
\newcommand{\Bl}[2]{\mathrm{Bl}_{#2}(#1)}
\newcommand{\genhole}[1]{\draw[#1, xscale=.2, yscale=.3] (-2.2,.4) .. controls (-1.5,-0.3) and (-1,-0.5) .. (0,-.5) .. controls (1,-0.5) and (1.5,-0.3) .. (2.2,0.4);
\draw[#1,xscale=.2, yscale=.3] (-1.75,0) .. controls (-1.5,0.3) and (-1,0.5) .. (0,.5) .. controls (1,0.5) and (1.5,0.3) .. (1.75,0);
}
\tikzset{middlearrow/.style={
        decoration={markings,
            mark= at position 0.5 with {\arrow{#1}} ,
        },
        postaction={decorate}
    }
}
\newcommand{\thickbar}{\mathpalette\@thickbar}
\newcommand{\@thickbar}[2]{{#1\mkern1.5mu\vbox{
  \sbox\z@{$#1\mkern-1mu#2\mkern-1mu$}%
  \sbox\tw@{$#1\overline{#2}$}%
  \dimen@=\dimexpr\ht\tw@-\ht\z@-.6\p@\relax
  \hrule\@height.4\p@ 
  \vskip1\p@
  \hrule\@height.4\p@ 
  \vskip\dimen@
  \box\z@}\mkern1.5mu}
}
\newtheorem{theorem}{Theorem}[section]
\newtheorem{corollary}[theorem]{Corollary}
\newtheorem{lemma}[theorem]{Lemma}
\newtheorem{prop}[theorem]{Proposition}
\theoremstyle{definition}
\newtheorem{definition}[theorem]{Definition}
\newtheorem{remark}[theorem]{Remark}
\newtheorem{example}[theorem]{Example}
\numberwithin{equation}{section}
\title{\vspace{-2em}\bf Symplectic groupoids of log symplectic manifolds}
\author{Marco Gualtieri\footnote{University of Toronto;  mgualt@math.toronto.edu.} \ \  and Songhao Li\footnote{University of Toronto;  sli@math.toronto.edu.}}
\date{}
\definecolor{tocolor}{rgb}{.1,.1,.5}
\definecolor{urlcolor}{rgb}{.2,.2,.6}
\definecolor{linkcolor}{rgb}{.1,.1,.6}
\definecolor{citecolor}{rgb}{.6,.2,.1}
\begin{document}
\maketitle 

\abstract{
A log symplectic manifold is a Poisson manifold which is generically nondegenerate.
We develop two methods for constructing the symplectic groupoids of log symplectic manifolds.
The first is a blow-up construction, corresponding to the notion of an elementary modification of a Lie algebroid along a subalgebroid.  The second is a gluing construction, whereby groupoids defined on the open sets of an appropriate cover may be combined to obtain global integrations.  This allows us to classify all Hausdorff symplectic groupoids of log symplectic manifolds in a combinatorial fashion, in terms of a certain graph of fundamental groups associated to the manifold.  Using the same ideas, and as a first step, we also construct and classify the groupoids integrating the Lie algebroid of vector fields tangent to a smooth hypersurface.
}

\begingroup
\hypersetup{linkcolor=tocolor}
\setcounter{tocdepth}{2}
\tableofcontents\pagebreak
\endgroup

\section{Introduction}\label{intr}

	A Poisson manifold $M$ may have quite complicated local behaviour, as it involves a singular foliation by smooth symplectic leaves of varying dimension.  For this reason, it is natural to consider symplectic manifolds $S$ which map surjectively to $M$ via a Poisson map.  In this way, one hopes to replace the study of $M$ by the study of these symplectic ``realizations'' $S$.  This programme was initiated by Weinstein~\cite{MR866024}, who observed that in many cases a canonical realization $\Gg$ may exist, having twice the dimension of $M$ and further endowed with the structure of a Lie groupoid over $M$.  In analogy with the integration of a Lie algebra to a Lie group, $\Gg$ is called the symplectic groupoid integrating the Poisson manifold $M$. 

In his original paper, Weinstein observed that a Poisson manifold $M$, and more generally, a Lie algebroid, may fail to integrate to a smooth Lie groupoid.  Since that time, increasingly powerful general theories have been developed to address the question of existence of integrations of Lie algebroids, culminating in the work of Crainic and Fernandes~\cite{MR1973056,MR2128714}, who described the obstruction theory for the existence of integrations, using a general construction of the symplectic groupoid given by Cattaneo and Felder~\cite{MR1938552} in terms of an infinite--dimensional symplectic quotient. 

In view of the fact that explicit examples of symplectic groupoids are not very numerous, 
and since a concrete understanding of their topology is desirable in many applications, such as in the theory of geometric quantization, the purpose of this paper is to develop methods for the explicit construction and classification of symplectic groupoids.  We apply these methods to 
study the symplectic groupoids of log symplectic manifolds, which are generically nondegenerate Poisson manifolds that drop rank along a smooth hypersurface.  Log symplectic surfaces were completely classified by Radko; she called them topologically stable Poisson structures~\cite{MR1959058}.  In general dimension, the behaviour of these Poisson structures is carefully described by Guillemin, Miranda, and Pires~\cite{MR2861781,Guillemin2}, who call them {\it b}--symplectic manifolds.
 
The first method is inspired by work of Weinstein~\cite{MR1394388}, Mazzeo--Melrose~\cite{MR1734130}, and Monthubert~\cite{MR1600121}, and involves a systematic use of the projective blow-up operation.  By 
understanding how the blow-up affects the Lie algebroid, Lie groupoid, and Poisson structures individually, we are able to construct explicitly the \emph{adjoint} symplectic groupoids of log symplectic manifolds.   The existence of these groupoids has been known since the work of Debord~\cite{MR1906783}, but their detailed global geometry has not been fully explored.  

The second method is a gluing construction, inspired by the work of Nistor~\cite{MR1774632}.  
By choosing an appropriate open cover, called an \emph{orbit cover}, on $M$, we show that it is possible to give an 
explicit combinatorial description of the category of all integrations of the log tangent bundle, 
as well as the category of all Hausdorff symplectic groupoids integrating a proper
log symplectic manifold.   

We thank Henrique Bursztyn, Marius Crainic, Rui Fernandes, Lisa Jeffrey, Eva Miranda, Ana Rita Pires, Alexander Polishchuk, Brent Pym and Alan Weinstein for helpful discussions and insights.  This research is supported by an NSERC Discovery Grant, an Ontario ERA and an Ontario Graduate Scholarship.

	\subsection{Log symplectic manifolds}\label{logsymp}

	Poisson manifolds of log symplectic type were studied by Goto in the holomorphic category~\cite{MR1953353}, and by Guillemin, Miranda and Pires in the smooth category~\cite{MR2861781,Guillemin2}.  In dimension 2, Radko provided a complete classification~\cite{MR1959058}.  These Poisson manifolds are generically symplectic, and degenerate along a  hypersurface.  In this section, we describe the Poisson geometry near such a hypersurface.
%

\begin{definition}\label{bsymp}
A \defw{log symplectic manifold} is a smooth $2n$--manifold $M$, equipped with a Poisson structure $\pi$ whose Pfaffian, $\pi^n$, vanishes transversely.  
\end{definition}
The degeneracy locus $D = (\pi^n)^{-1}(0)$ is then an embedded, possibly disconnected, Poisson hypersurface, and $M\backslash D$ is a union of open symplectic leaves.  If $M$ is compact, then both $D$ and $M\backslash D$ have finitely many components.  The Poisson structure $\pi$ is called log symplectic because $\pi^{-1}$ defines a logarithmic symplectic form, as we now explain.
\subsubsection{Log tangent bundle}
\begin{definition}\label{logtgt}
The \defw{log tangent bundle} $T_D M$ associated to a closed hypersurface $D\subset M$ is the vector bundle associated to the sheaf of vector fields on $M$ tangent to $D$.  Equipped with the induced Lie bracket and the inclusion morphism $a:T_DM\to TM$, it is a Lie algebroid.
\end{definition}
\begin{remark}
The de Rham complex of the Lie algebroid $T_D M$ may be interpreted as differential forms with logarithmic singularities along $D$; it was introduced in~\cite{MR0417174} and is denoted by $(\Omega^\bullet_M(\log D), d)$.\qed
\end{remark}
\begin{prop}
Let $(M,\pi)$ be a log symplectic manifold with degeneracy locus $D$. Then $\pi^{-1}$ determines a nondegenerate closed logarithmic 2-form in $\Omega^2_M(\log D)$. 
\end{prop}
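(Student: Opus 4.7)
The plan is to lift $\pi$ to a nondegenerate Poisson bivector $\widetilde{\pi}$ on the log tangent bundle $T_D M$, then invert to obtain the desired closed nondegenerate log 2-form. The key point is that no local normal form needs to be invoked: the argument is driven by the transverse vanishing of $\pi^n$ together with the injectivity of the anchor on higher exterior powers.

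First I would observe that $D$ is a Poisson submanifold of $M$. For any $f\in C^\infty(M)$, the Hamiltonian vector field $X_f$ preserves $\pi$ and hence $\pi^n$, so its flow preserves the zero locus of $\pi^n$; thus $X_f$ is tangent to $D=(\pi^n)^{-1}(0)$. In local coordinates $(x,y_1,\ldots,y_{2n-1})$ with $D=\{x=0\}$, this means $\pi^{xy_i}=\{x,y_i\}$ vanishes on $D$ and is consequently divisible by $x$. Every $\partial_x\wedge\partial_{y_i}$ term of $\pi$ can then be rewritten as $g_i\,(x\partial_x)\wedge\partial_{y_i}$ with $g_i$ smooth, exhibiting $\pi$ as lying in the image of the canonical map $\Lambda^2 a:\Lambda^2 T_D M\to\Lambda^2 TM$ induced by the anchor. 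The resulting lift $\widetilde{\pi}\in\Gamma(\Lambda^2 T_D M)$ is unique, since $\Lambda^2 a$ is an isomorphism on $M\setminus D$ and the sheaf $\Lambda^2 T_D M$ is torsion-free.

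Next I would check nondegeneracy by comparing top exterior powers. In the adapted local frames, the induced map $\Lambda^{2n}a:\det T_D M\to\det TM$ is multiplication by $x$, so from $\pi^n=\Lambda^{2n}a(\widetilde{\pi}^n)$ and transverse vanishing of $\pi^n$ one reads off that $\widetilde{\pi}^n$ is nowhere zero. For closedness of $\omega=\widetilde{\pi}^{-1}\in\Omega^2(T_D M)=\Omega^2_M(\log D)$, I would verify that $\widetilde{\pi}$ is itself Poisson on the Lie algebroid $T_D M$, i.e.\ $[\widetilde{\pi},\widetilde{\pi}]=0$ in $\Gamma(\Lambda^3 T_D M)$. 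Since the anchor is a Lie algebroid morphism, the induced map on multivectors is a morphism of Gerstenhaber algebras, so $\Lambda^3 a\,[\widetilde{\pi},\widetilde{\pi}]=[\pi,\pi]=0$; the same torsion-free argument shows $\Lambda^3 a$ is injective, giving $[\widetilde{\pi},\widetilde{\pi}]=0$. Agreement with $\pi^{-1}$ on $M\setminus D$ is immediate, since $\Lambda^2 a$ is an isomorphism there.

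The main conceptual step is the first observation, that $D$ is automatically a Poisson submanifold; the rest is bookkeeping. Once this is in hand, both nondegeneracy and closedness follow formally from the injectivity of the induced anchor maps on $\Lambda^2 T_DM$ and $\Lambda^3 T_DM$.
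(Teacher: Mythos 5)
Your proof is correct and follows essentially the same route as the paper: establish that $\pi$ lifts through the anchor of $T_DM$ using tangency of Hamiltonian vector fields to $D$, deduce nondegeneracy of the lift from an order-of-vanishing count on top exterior powers (the paper phrases this via $\det a$, $\det a^*$ and $\det\pi$ rather than the Pfaffian, but it is the same computation), and get closedness from agreement with $\pi^{-1}$ on the dense open set $M\setminus D$. Your detour through $[\widetilde{\pi},\widetilde{\pi}]=0$ and injectivity of $\Lambda^3 a$ is a harmless variant of the paper's one-line density argument for closedness.
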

\begin{proof}
If $f$ is a local smooth function vanishing to first order along $D$, then $\pi(df)$ must vanish along $D$, since $D$ is Poisson.  Therefore, $\pi(df)=fY$ for a smooth vector field $Y$, which must be tangent to $D$, since $Y(f) = f^{-1}\pi(df,df) = 0$.  This proves that $\pi:T^*M\to TM$ lifts to $\widetilde{\pi}:T^*_DM\to T_DM$, commuting with the natural inclusions:
\[
\xymatrix{
	T^*_DM\ar[r]^{\widetilde{\pi}} & T_DM\ar[d]^{a}\\
	T^*M\ar[r]_{\pi}\ar[u]^{a^*} & TM}
\]
It remains to show $\widetilde{\pi}$ is an isomorphism, but this is obtained from the determinant of the above diagram: $\det a$ and $\det a^*$ vanish to first order along $D$, whereas $\det \pi = \pi^n\otimes\pi^n$ vanishes to second order. Hence $\widetilde{\pi}$ is nondegenerate, and $\widetilde{\pi}^{-1}$ is closed since $\pi^{-1}$ is a well-defined symplectic form on $M\backslash D$.  
\end{proof}
\subsubsection{Poisson line bundles}
To describe the geometry of log symplectic manifolds in a neighbourhood of the degeneracy locus, we make use of the notion of a rank 1 Poisson module~\cite{MR1465521} or Poisson line bundle, which we now recall.
\begin{definition}\label{pvb}
A \defw{Poisson vector bundle} over the Poisson manifold $(M,\pi)$ is a vector bundle $V\to M$ equipped with a flat Poisson connection, i.e. a differential operator $\Del:\Gamma(V)\to\Gamma(TM\otimes V)$ such that $\Del(fs) = \pi(df)\otimes s + f\Del s$ for $f\in C^\infty(M)$ and with vanishing curvature in $\Gamma(\wedge^2 TM)$.
\end{definition}
A real line bundle $L$ always admits a flat connection $\nabla$, and any flat Poisson connection $\Del$ may be written 
\begin{equation}\label{flatdel}
\Del = \pi\circ \nabla + Z,
\end{equation}
for $Z$ a Poisson vector field.  Another flat connection $\nabla'$ differs from $\nabla$ by a closed real 1-form $A$, so that the Poisson vector field $Z$ is determined uniquely by $\Del$ only up to the addition of a locally Hamiltonian vector field.  For this reason, if the underlying Poisson manifold has odd dimension $2n-1$, the multivector $Z\wedge \pi^{n-1}$ is independent of the choice of $\nabla$.  We call this the residue of $(L,\Del)$, following~\cite{MGBP1}.
\begin{definition}\label{residef}
The \defw{residue} $\chi\in\Gamma(\wedge^{2n-1} TD)$ of a Poisson line bundle $(L,\Del)$ over a Poisson $(2n-1)$--manifold $(D,\sigma)$ is defined by 
\[
\chi = Z\wedge \sigma^{n-1},
\]
where $Z$ is a Poisson vector field given by~\eqref{flatdel}.
\end{definition}
\begin{example}\label{acanmod}
The anticanonical bundle $K^* = \det TM$ is a Poisson line bundle, with Poisson connection $\Del$ uniquely determined by the condition
\begin{equation}\label{canondef}
\Del_{df}(\rho) = \mathcal{L}_{\pi(df)}\rho,
\end{equation}
where $f\in C^\infty(M)$ and $\rho\in \Gamma(K^*)$.  In coordinates where $\pi=\pi^{ij}\partial_{x_i}\wedge\partial_{x_j}$, the associated Poisson vector field via~\ref{flatdel} is $Z = ({\partial_{k} \pi^{ik}})\partial_{x_i}$, known as the modular vector field~\cite{MR1484598} associated to the Lebesgue measure.
\end{example}
\subsubsection{Degeneracy loci}
We now describe the Poisson geometry of the degeneracy locus $D$, recovering some results of~\cite{MR2861781} by slightly different means.
\begin{prop}\label{rankloc}
The degeneracy locus of a log symplectic $2n$-manifold $(M,\pi)$ is a $(2n-1)$-manifold whose Poisson structure has constant rank $2n-2$ and which admits a Poisson vector field transverse to the symplectic foliation.  In particular, $D$ is unimodular.   
\end{prop}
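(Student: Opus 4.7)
The plan is to analyze $\pi$ locally around a point of $D$ via a defining function, extract the rank and a transverse vector field from the transversality of $\pi^n$, globalize via the Poisson line bundle structure on the normal bundle, and then deduce unimodularity from a canonical Hamiltonian-invariant volume form.

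First, I would choose coordinates $(x, y_2, \ldots, y_{2n})$ near $p \in D$ with $D = \{x = 0\}$. By the preceding proposition, $\pi(dx) = xY$ for a smooth vector field $Y$ tangent to $D$, which gives the decomposition
\[
\pi = x\,\partial_x \wedge Y + \pi_{yy},
\]
where $\pi_{yy}$ involves only the $\partial_{y_i}$. Since $\pi_{yy}$ lies in a rank-$(2n-1)$ subbundle, $\pi_{yy}^n \equiv 0$, so $\pi^n = n\,x\,\partial_x \wedge Y \wedge \pi_{yy}^{n-1}$. Transversality of $\pi^n$ along $D$ is therefore equivalent to the pointwise nonvanishing of $\partial_x \wedge Y|_D \wedge \pi_{yy}^{n-1}|_D$; this simultaneously forces (a) $\pi_{yy}^{n-1}|_D \neq 0$, so the induced Poisson structure $\sigma := \pi|_D$ on $D$ has constant rank exactly $2n-2$, and (b) $Y|_D$ does not lie in the image of $\sigma$, hence is transverse to the symplectic foliation of $D$.

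Next, I would globalize $Y|_D$ by using that $D$, being a Poisson submanifold, has conormal bundle $N^*D$ equal to a Lie subalgebroid of the cotangent Lie algebroid $T^*M$; dually, $ND = TM|_D/TD$ is a Poisson line bundle in the sense of Definition~\ref{pvb}. Every real line bundle admits a flat connection $\nabla$, so formula~\eqref{flatdel} gives a global decomposition $\Del = \sigma\circ\nabla + Z$ with $Z \in \Gamma(TD)$ a Poisson vector field, agreeing locally with $Y|_D$ modulo Hamiltonian vector fields. The residue $\chi := Z \wedge \sigma^{n-1}$ of Definition~\ref{residef} is independent of $\nabla$ (Hamiltonian vector fields are tangent to the leaves and so wedge to zero against $\sigma^{n-1}$), and hence equals the local $Y|_D \wedge \sigma^{n-1}$, which is nowhere zero by (b). Thus $Z$ is a globally defined Poisson vector field transverse to the symplectic foliation.

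Finally, for unimodularity, the nonvanishing multivector $\chi$ trivializes $\wedge^{2n-1}TD$, determining a global volume form $\mu$ on $D$ by $\mu(\chi) = 1$. Using that $Z$ is Poisson, one has $[X_f, Z] = -X_{Zf}$, and a short Schouten calculation gives
\[
L_{X_f}\chi = [X_f, Z]\wedge \sigma^{n-1} + Z \wedge L_{X_f}\sigma^{n-1} = -X_{Zf}\wedge\sigma^{n-1} = 0,
\]
since $X_f$ preserves $\sigma$ (being Hamiltonian) and $X_{Zf}$ is tangent to the symplectic leaves, against which $\sigma^{n-1}$ is of top degree. So $\mu$ is invariant under every Hamiltonian flow, establishing unimodularity. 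The main conceptual step to verify carefully is the passage from the local $Y|_D$ to a globally defined Poisson vector field via the Poisson line bundle structure on $ND$ and the invariance of $\chi$ under change of flat connection; everything else reduces to direct Schouten calculus using the transversality of $\pi^n$.
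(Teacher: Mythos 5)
Your proof is correct, but it takes a more hands-on route than the paper for the first half. Where you choose a defining function and read everything off the explicit factorization $\pi^n = n\,x\,\partial_x\wedge Y\wedge\pi_{yy}^{n-1}$, the paper argues coordinate-freely with the first jet: transversality of $\pi^n$ makes $j^1(\pi^n)$ a nonvanishing section of $N^*D\otimes\wedge^{2n}TM|_D\cong\wedge^{2n-1}TD$ (this is their covolume $\chi$), and the Leibniz identity $j^1(\pi^n)=n\,\pi^{n-1}\wedge j^1(\pi)$ gives the constant rank. For the transverse vector field, the paper uses the modular vector field of the anticanonical Poisson module $K^*=\det TM$ and shows by the computation $\chi=\Tr(\nabla\pi^n)|_D=(i_{d\log f}\pi^n)|_D=n(Z\wedge\pi^{n-1})|_D$ that its residue along $D$ is exactly $\chi$; you instead use the Bott-type Poisson module structure on $ND$ directly and match its residue to your local $Y|_D\wedge\sigma^{n-1}$. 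These are essentially the same object (the paper identifies $K^*|_D$ with $ND$ as Poisson modules), so the two globalizations buy the same thing; your local computation is more elementary and makes the role of transversality very concrete, while the paper's jet argument avoids coordinates and produces $\chi$ intrinsically. Your unimodularity argument via $L_{X_f}(Z\wedge\sigma^{n-1})=-X_{Zf}\wedge\sigma^{n-1}=0$ is exactly what the paper leaves implicit in its last sentence, spelled out.

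Two small points to tighten. First, the justification that $ND$ is a Poisson line bundle is not quite ``dually from $N^*D$ being a subalgebroid'': the canonical flat Poisson connection is $\Del_{df}\bar v=\overline{[X_{\tilde f},\tilde v]}$, well defined because $X_{\tilde f}$ is tangent to $D$ and $\pi(N^*D)=0$; this is the structure the paper invokes in \S\ref{sympmaptorus}. Second, the identification $Z=Y|_D$ for the flat connection trivializing $\overline{\partial_x}$ is asserted rather than checked; the check is the one-line computation $\Del_{df}\overline{\partial_x}=\overline{[X_{\tilde f},\partial_x]}=Y|_D(f)\,\overline{\partial_x}$, using that the $\partial_x$--coefficient of $X_{\tilde f}$ is $-xY(\tilde f)$. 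You correctly flag this as the step needing care; with it supplied, the argument is complete.
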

\begin{proof}
Since $\pi^n$ vanishes transversely along $D$, its first derivative (i.e. first jet) $j^1(\pi^n)$ defines, along $D$, a nonvanishing section $\chi$ of $N^*D\otimes\wedge^{2n}TM|_D\cong \wedge^{2n-1} TD$, i.e. a covolume form on $D$.  On the other hand, the Leibniz rule gives $j^1(\pi^n) = n \pi^{n-1}\wedge j^1(\pi)$, so that $\pi^{n-1}$ is nonvanishing along $D$, showing $\pi$ has constant rank $2n-2$ along $D$.   

By Example~\ref{acanmod}, the anticanonical bundle $K^*=\det TM$ has a natural flat Poisson connection $\Del$.  Upon choosing a usual flat connection $\nabla$ on $K^*$, we obtain, via~\eqref{flatdel}, a Poisson vector field $Z$ on $M$, which must be tangent to the degeneracy locus $D$.  It remains to show that $Z$ is transverse to the symplectic leaves on $D$.  This may be rephrased as follows: the anticanonical bundle restricts to a Poisson line bundle on $D$, canonically the normal bundle $ND$, and we claim its residue $Z\wedge\pi^{n-1}$ is nonvanishing.  In fact, we show it coincides with the covolume $\chi$ defined above.

To see this, choose a flat local trivialization $\rho$ for $K^*$ near a point in $D$.  Then $\pi^n = f\rho$, for a smooth function $f$.  From~\eqref{canondef}, we have $\Del\pi^n = 0$, and applying~\eqref{flatdel}, we obtain $\pi(df) + f Z = 0$, i.e. $Z$ has singular Hamiltonian $-\log f$. Therefore:
\[
\chi = \Tr(\nabla \pi^n)|_D = \Tr(\nabla (f\rho))|_D = (i_{d\log f} \pi^n)|_D =  n(Z\wedge \pi^{n-1})|_D,
\]
showing that $Z\wedge\pi^{n-1}$ is nonvanishing on $D$, as required.  Also, since $Z$ is Poisson, this covolume form is invariant under Hamiltonian flows, i.e. $D$ is unimodular.
\end{proof}
Proposition~\ref{rankloc} has a converse, because the total space of a Poisson line bundle is naturally Poisson~\cite{MR1465521}.  This provides an alternative approach to the extension theorem for regular corank one Poisson structures in~\cite{Guillemin2}.
\begin{prop}\label{totpois}
Let $(D,\sigma)$ be a Poisson $(2n-1)$--manifold of constant rank $2n-2$, and let $N$ be a Poisson line bundle with nonvanishing residue $\chi\in \Gamma(\wedge^{2n-1} TD)$.  Then the total space of $N$ is a log symplectic manifold with degeneracy locus $(D,\sigma)$.
\end{prop}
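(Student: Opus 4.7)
The plan is to write down the Poisson bivector on $\tot(N)$ explicitly in a local flat trivialization and then read off the log symplectic structure directly. Following Polishchuk~\cite{MR1465521}, a Poisson line bundle $(N,\Del)$ carries a canonical Poisson structure on its total space restricting to $\sigma$ on the zero section. Picking a flat connection $\nabla$ on $N$, the decomposition~\eqref{flatdel} yields $\Del = \sigma\circ\nabla + Z$ for some Poisson vector field $Z$ on $D$. In a local flat trivialization with fibre coordinate $t$, using $\nabla$ to lift $\sigma$ and $Z$ horizontally, I would verify that the Poisson structure takes the form
\[
\pi_N = \sigma + tZ\wedge\partial_t,
\]
with the Jacobi identity reducing to the given conditions $[\sigma,\sigma]=0$ and $[Z,\sigma]=0$.

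The log symplectic condition then follows from a direct computation of the top power. Since $\sigma$ is a bivector in the rank-$(2n-1)$ horizontal distribution, $\sigma^n=0$ for dimensional reasons; and since $tZ\wedge\partial_t$ is decomposable as a bivector, $(tZ\wedge\partial_t)^{2}=0$. Only a single cross term survives, yielding
\[
\pi_N^n = n\,t\,\sigma^{n-1}\wedge Z\wedge\partial_t = n\,t\,\chi\wedge\partial_t,
\]
by Definition~\ref{residef}. Since $\chi$ is nowhere-vanishing and $\partial_t$ is a nowhere-vanishing vertical field, $\chi\wedge\partial_t$ is a nowhere-vanishing top-degree multivector; hence $\pi_N^n$ vanishes transversely along $\{t=0\}$, which per Definition~\ref{bsymp} exhibits $\tot(N)$ as a log symplectic manifold with degeneracy locus the zero section. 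Restricting $\pi_N$ to $\{t=0\}$ kills the vertical term and leaves $\sigma$, so the induced Poisson structure on the degeneracy locus agrees with $\sigma$ as required.

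The main obstacle is establishing the global well-definedness of the local formula for $\pi_N$. Under a change of flat trivialization $s_0\mapsto e^\alpha s_0$, the coordinate rescales as $t\mapsto te^{-\alpha}$, the horizontal distribution shifts, and $Z$ acquires a Hamiltonian correction $\sigma(d\alpha)$; one must verify that these changes conspire to leave $\pi_N$ invariant. This patching is implicit in Polishchuk's construction, but a hands-on verification requires careful bookkeeping.
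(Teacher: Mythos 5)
Your approach is essentially the paper's: the bivector you write in a flat trivialization, $\sigma + tZ\wedge\partial_t$, is exactly the paper's $\widetilde{\sigma}+\widetilde{Z}\wedge E$ (with $E=t\partial_t$ the Euler vector field), and your Pfaffian computation $\pi_N^n = n\,t\,\chi\wedge\partial_t$ correctly establishes transverse vanishing along the zero section --- indeed this is spelled out more explicitly than in the paper, which essentially asserts the log symplectic property and defers the Pfaffian factorization to a later proposition. The Jacobi identity reduction you describe is also correct, since $\sigma$ and $Z$ are $t$-independent in a flat trivialization.

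The one step you leave as ``careful bookkeeping'' --- well-definedness under change of flat connection --- is, however, the \emph{entire} content of the paper's proof, and it is not long: for $\nabla'=\nabla+A$ with $A$ a closed $1$-form, the horizontal lift of a vector field $X$ changes by $\widetilde{X}'-\widetilde{X}=A(X)E$, so the lifted bivector changes by $\widetilde{\sigma}'=\widetilde{\sigma}+\sigma(A)\wedge E$; meanwhile the decomposition $\Del=\sigma\circ\nabla'+Z'$ forces $Z'=Z-\sigma(A)$, so $\widetilde{Z}'\wedge E=\widetilde{Z}\wedge E-\sigma(A)\wedge E$, and the two corrections cancel. (Note the invariance should be checked against a change of \emph{connection}, not merely of flat trivialization of a fixed connection --- the latter has locally constant transition functions and is vacuous; your $t\mapsto te^{-\alpha}$ rescaling is really the coordinate expression of $\nabla\mapsto\nabla-d\alpha$.) You should not appeal to Polishchuk here as a black box, since the statement being proved is precisely that this local formula globalizes; with the three-line cancellation above inserted, your argument is complete.
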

\begin{proof}
Choose a flat connection\footnote{Proposition~\ref{totpois} also holds for complex line bundles~\cite{MR1465521}, when flat connections may not exist; for later convenience we present an argument tailored to the real case.} $\nabla$ on $N$, and let $Z$ be given by~\eqref{flatdel}.  Let $\widetilde{\sigma}$ and $\widetilde{Z}$ be the horizontal lifts of $\sigma$ and $Z$ to $\tot(N)$, and let $E$ be the Euler vector field.  Then 
\begin{equation}\label{totpi}
\pi = \widetilde{\sigma} + \widetilde{Z} \wedge E
\end{equation}
is a log symplectic structure on $\tot(N)$ with degeneracy locus $(D,\sigma)$.  We now verify that $\pi$ is independent of the choice of $\nabla$: for another connection $\nabla'=\nabla + A$, the horizontal lifts differ by $\widetilde{X}' - \widetilde{X} = A(X)E$, so that 
\[
\widetilde{\sigma}' = \widetilde{\sigma} + \sigma(A)\wedge E.
\]
On the other hand, for the new connection $Z' = Z - \sigma(A)$, so that 
\[
\widetilde{Z}' \wedge E =\widetilde{Z} \wedge E -\sigma(A)\wedge E,
\]
showing~\eqref{totpi} is independent of $\nabla$.
\end{proof}
\subsubsection{Proper log symplectic manifolds}\label{sympmaptorus}
In the proof of Proposition~\ref{rankloc}, we saw that the normal bundle of the degeneracy locus is itself a Poisson line bundle.  By Proposition~\ref{totpois}, therefore, its total space inherits a natural log symplectic structure.  This structure is called the \defw{linearization} of $\pi$ along $D$.  We require a very concrete description of this linearized Poisson structure, so we restrict to a special class of log symplectic manifolds.
\begin{definition}
A log symplectic manifold is \defw{proper} when each connected component $D_j$ of its degeneracy locus $D = \coprod_j D_j$ is compact and contains a compact symplectic leaf $F_j$. 
\end{definition}
Now let $(M,\pi)$ be a proper log symplectic manifold, with connected degeneracy locus $D$, containing a compact symplectic leaf $(F,\omega)$. As shown in~\cite{MR2861781}, it follows from the Reeb--Thurston stability theorem that the transverse Poisson vector field $Z$ renders $D$ isomorphic to a symplectic mapping torus $S^1_\lambda\ltimes_\varphi F$: 
\begin{equation}\label{maptor}
S^1_\lambda\ltimes_\varphi F = \frac{F \times \RR}{(x,t)\sim (\varphi(x), t+ \lambda)}, \ \ \ \lambda>0,
\end{equation}
where $\varphi:F\to F$ is a symplectomorphism.  Note that there is a natural projection map 
\[
f:S^1_\lambda\ltimes_\varphi F \longrightarrow S^1_\lambda = \frac{\RR}{t\sim t+\lambda},
\] 
and the Poisson structure on the mapping torus is given by $\iota\omega^{-1}\iota^*$, where $\iota$ is the inclusion morphism of the subbundle $\ker (Tf)\subset TD$.  The normal bundle of $D$ is a real line bundle, classified up to isomorphism by $H^1(D,\ZZ_2)=H^1(F,\ZZ_2)^\varphi\times \ZZ_2$, where $H^1(F,\ZZ_2)^\varphi$ denotes the subgroup of $\varphi$--invariant classes.  In other words, the normal bundle is isomorphic to a tensor product $N=\widetilde{L} \otimes f^*Q$, where $\widetilde{L}$ is the line bundle induced on the mapping torus from a $\ZZ$--equivariant line bundle $L$ on $F$, and $Q$ is a line bundle on $S^1_\lambda$.  Choosing a $\ZZ$--invariant flat connection on $L$ and a flat connection on $Q$, we obtain a flat connection $\nabla$ on $N$.  The Poisson module structure on $N$ is then simply 
\[
\Del = \pi\circ\nabla + \partial_t,
\]
and so the residue $\chi$ of the Poisson line bundle is $(\omega^{n-1}\wedge f^*dt)^{-1}$. The constant $\lambda$ retained in the construction has an invariant meaning: it is the ratio of the volume of $D$ (with respect to $\chi^{-1}$) to the volume of the symplectic leaf $F$ (with respect to $\omega^{n-1}$).  Summarizing the above discussion, we obtain the following result.
\begin{prop} \label{Prop: LogSympLineariation}
The linearization of a proper log symplectic $2n$--manifold along a connected component $D$ of its degeneracy locus is classified up to isomorphism by the following data: a compact symplectic $(2n-2)$--manifold $(F,\omega)$, a symplectomorphism $\varphi$, a cohomology class in $H^1(F,\ZZ_2)^\varphi\times \ZZ_2$, and a positive real number $\lambda$, called the modular period.  
\end{prop}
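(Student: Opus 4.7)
The plan is to assemble the classification from three pieces of structure already developed above: the description of $(D,\sigma)$ as a symplectic mapping torus coming from the transverse Poisson vector field $Z$ of Proposition~\ref{rankloc}, the classification of the real normal line bundle $N$ by its first Stiefel--Whitney class, and the total-space reconstruction given in Proposition~\ref{totpois}. One direction packages the linearization into the listed data; the other reconstructs the linearization from it and checks independence of auxiliary choices.

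First I would verify that the data determine the linearization uniquely up to isomorphism. Given $(F,\omega)$, a symplectomorphism $\varphi$, and $\lambda>0$, the mapping torus~\eqref{maptor} carries a canonical Poisson structure $\sigma$ of constant rank $2n-2$ for which $\partial_t$ is a transverse Poisson vector field. The cohomology class in $H^1(F,\ZZ_2)^\varphi\times\ZZ_2$ picks out, up to isomorphism, a real line bundle of the form $N = \widetilde{L}\otimes f^*Q$ on $D$, and the flat Poisson connection $\Del = \pi\circ\nabla + \partial_t$ has residue proportional to $(\omega^{n-1}\wedge f^*dt)^{-1}$, hence is nonvanishing. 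Proposition~\ref{totpois} then produces a log symplectic structure on $\tot(N)$ whose degeneracy locus realizes $(D,\sigma)$, and the argument there shows independence from the auxiliary flat connection.

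Conversely, I would extract each invariant from $(M,\pi)$ near $D$. By Proposition~\ref{rankloc}, the symplectic leaves of $D$ have codimension one and the Poisson vector field $Z$ is transverse to them; the hypothesis that $D$ contains a compact leaf $F$ together with Reeb--Thurston stability forces $D$ to be the mapping torus $S^1_\lambda\ltimes_\varphi F$, with $\varphi$ the time-$\lambda$ return map of the flow of $Z$. The period $\lambda$ is invariantly the ratio of the $\chi^{-1}$--volume of $D$ to the $\omega^{n-1}$--volume of $F$, and both measures are canonically determined by $\pi$. The isomorphism class of the real line bundle $N$ is then determined by its class in $H^1(D,\ZZ_2)$, which splits as $H^1(F,\ZZ_2)^\varphi\times\ZZ_2$ via the mapping torus fibration $f$.

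The main obstacle will be making precise the equivalence relation under which the data $(F,\omega,\varphi,[N])$ classify the linearization. Different choices of base leaf $F' = \Phi_s(F)$ for the flow of $Z$ and different trivializations of the flow produce monodromy symplectomorphisms differing by conjugation in $\mathrm{Symp}(F,\omega)$ and by Hamiltonian isotopy; one must check that precisely these ambiguities are absorbed when one declares two tuples equivalent, and that the $H^1(F,\ZZ_2)^\varphi$ invariance is exactly the $\varphi$-equivariance needed to compare $\widetilde{L}$ across such changes. Once this bookkeeping is in place, combining it with the reconstruction in Proposition~\ref{totpois} yields the claimed bijection between isomorphism classes of linearized log symplectic germs along $D$ and tuples $(F,\omega,\varphi,\text{class},\lambda)$ up to the natural equivalence.
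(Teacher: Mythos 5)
Your proposal is correct and follows essentially the same route as the paper: the statement is explicitly a summary of the preceding discussion in \S\ref{sympmaptorus}, which derives the mapping torus structure of $D$ from Reeb--Thurston stability and the transverse Poisson vector field $Z$, classifies $N$ by $H^1(D,\ZZ_2)\cong H^1(F,\ZZ_2)^\varphi\times\ZZ_2$, identifies the residue as $(\omega^{n-1}\wedge f^*dt)^{-1}$ and $\lambda$ as a volume ratio, and reconstructs the linearization via Proposition~\ref{totpois}. Your additional care about the equivalence relation (conjugation and Hamiltonian isotopy ambiguities in $\varphi$, and $\varphi$-equivariance of the class of $\widetilde{L}$) is a reasonable elaboration of bookkeeping the paper leaves implicit, not a different argument.
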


One of the main results of~\cite{Guillemin2}, extending the result in~\cite{MR1959058} for surfaces, is a proof, via a Moser-type deformation argument, that log symplectic manifolds are linearizable, namely that a tubular neighbourhood of each component $D_j$ of the degeneracy locus is isomorphic, as a log symplectic manifold, to a neighbourhood of the zero section in the linearization along $D_j$.
\begin{theorem}[Guillemin--Miranda--Pires~\cite{Guillemin2}]\label{Thm: GMP} A log symplectic manifold is linearizable along its degeneracy locus.\end{theorem}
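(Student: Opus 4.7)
The plan is to prove the linearization theorem by a Moser-type argument carried out in the logarithmic de Rham complex. Since the statement is local around each component of the degeneracy locus, fix one component $D$ and work in a tubular neighborhood.

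The first step is to use the ordinary tubular neighborhood theorem to identify a neighborhood $U$ of $D$ in $M$ with a neighborhood of the zero section in $\tot(ND)$. Under this diffeomorphism, the original log symplectic structure becomes a Poisson structure $\pi$ on $U$, which I will compare to the linearized model $\pi_0$ constructed via Proposition~\ref{totpois} from the Poisson line bundle $ND$. By construction of the linearization, $\pi$ and $\pi_0$ have the same degeneracy locus $D$, restrict to the same Poisson structure $\sigma$ on $D$, and induce the same Poisson module structure on $ND$; equivalently, passing to the inverse log symplectic forms $\omega = \widetilde{\pi}^{-1}$ and $\omega_0 = \widetilde{\pi_0}^{-1}$ in $\Omega^2_M(\log D)$, the difference $\omega - \omega_0$ vanishes along $D$ in the sense of log forms.

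The second step is to establish a relative Poincar\'e lemma in the log de Rham complex: a closed log $k$-form on $\tot(ND)$ that vanishes along the zero section is, after possibly shrinking the neighborhood, of the form $d\alpha$ for a log $(k-1)$-form $\alpha$ that also vanishes along the zero section. The standard proof uses the homotopy operator associated to the Euler vector field $E$ on $\tot(ND)$, which is tangent to $D$ and hence acts naturally on $\Omega^\bullet(\log D)$; the classical Cartan homotopy formula $\mathcal{L}_E = d\iota_E + \iota_E d$ then produces the desired primitive, since the retraction to the zero section is covered by a family of log-form automorphisms. Applying this to the closed log 2-form $\omega - \omega_0$ yields a log 1-form $\alpha$ with $d\alpha = \omega - \omega_0$ and $\alpha|_D = 0$.

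The third step is the Moser interpolation. Consider $\omega_t = (1-t)\omega_0 + t\omega$ for $t \in [0,1]$. Both $\omega_0$ and $\omega$ are nondegenerate log 2-forms, and they agree along $D$, so their convex combination is still a nondegenerate log 2-form on a sufficiently small neighborhood of $D$ (independent of $t$, by compactness of $D$ and continuity of nondegeneracy). Define a time-dependent log vector field $X_t$ by $\iota_{X_t}\omega_t = -\alpha$; because $\omega_t$ is nondegenerate as a log form, $X_t$ is a section of $T_D M$, hence a genuine vector field tangent to $D$, and moreover it vanishes along $D$ because $\alpha$ does. Integrating $X_t$ produces a family of diffeomorphisms $\phi_t$ defined on a neighborhood of $D$ and fixing $D$ pointwise, and the usual Moser computation $\frac{d}{dt}\phi_t^*\omega_t = \phi_t^*(\mathcal{L}_{X_t}\omega_t + \dot\omega_t) = \phi_t^*(d\iota_{X_t}\omega_t + (\omega - \omega_0)) = 0$ shows $\phi_1^*\omega = \omega_0$, which is the required log symplectomorphism.

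The main technical obstacle is the relative Poincar\'e lemma in step two: one needs the homotopy operator from the Euler vector field to respect the logarithmic structure and to send forms vanishing on $D$ to forms vanishing on $D$. This is where the specific structure of $\tot(ND)$ as a vector bundle (equipped with a connection chosen in the proof of Proposition~\ref{totpois}) is used essentially, since the same argument on a general neighborhood of $D$ would require an additional retraction. Once this log Poincar\'e lemma is in hand, steps one and three are formal consequences of the standard Moser method adapted to the log setting.
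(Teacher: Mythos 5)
The paper does not prove this theorem: it is quoted verbatim from Guillemin--Miranda--Pires, and the surrounding text only records that their proof proceeds ``via a Moser-type deformation argument.'' Your sketch --- identify a tubular neighbourhood with $\tot(ND)$, observe that $\omega-\omega_0$ is a closed logarithmic $2$-form vanishing along $D$, produce a primitive vanishing along $D$ via the Euler-vector-field homotopy operator in the log de Rham complex, and run the Moser path method with the resulting flow tangent to and fixing $D$ --- is a correct outline of exactly that argument, so it matches the intended (cited) proof rather than departing from it.
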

%
%
%
\begin{example} \label{Example: EllipticCurve}
The cubic polynomial $g(x) = x(x-1)(x-t)$,  $0<t<1$, defines a Poisson structure on $\RR^2$ given by 
\[
\pi = (g(x)-y^2) \partial_x\wedge \partial_y,
\] 
which extends smoothly to a log symplectic structure on $\RR P^2$ with degeneracy locus $D$ given by the real elliptic curve $y^2 = g(x)$, as shown below.
\begin{center}
	\begin{tikzpicture}[scale=.6][>=angle 60]
	\draw	[->, thick] (1.5,-2.598) arc (-60:120:3cm);
	\draw [->, thick] (-1.5,2.598) arc (120:300:3cm);
	\draw (0,-3) .. controls (0,-1) and (-.7,-1) .. (-.75,0) .. controls (-.7,1) and (0,1) .. (0,3) node [at start, left] {$D_1$};
	\draw (1,0) circle (0.7cm) node [right=1.2em] {$D_0$};
	\end{tikzpicture}
\end{center}
The degeneracy locus has two connected components: $D_0$, containing $\{(0,0),(t,0)\}$ and with trivial normal bundle, and $D_1$, containing $\{(1,0), (\infty,0)\}$ and with nontrivial normal bundle. The residue $\chi$ of $\pi$ along $D$ is such that $\chi^{-1}$ coincides with the Poincar\'e residue
\[
\frac{dx}{2y} = \frac{dx}{2\sqrt{x(x-1)(x-t)}}.
\] 
This extends to a holomorphic form on the complexified elliptic curve, in which $D_0, D_1$ are cohomologous, so that the modular periods $\lambda_0, \lambda_1$ of $D_0, D_1$ must coincide.  The modular period $\lambda_0$ is therefore a classical elliptic period~\cite{MR2024529}, given by the Gauss hypergeometric function 
\[
\lambda_0(t) = \pi F(\tfrac{1}{2}, \tfrac{1}{2}, 1; t).
\]
\end{example}

	\subsection{Lie groupoids and Lie algebroids}

	 In this paper, the manifold $M$ of objects of a Lie groupoid is assumed to be a smooth manifold in the usual sense, but the space of arrows $\Gg$ must satisfy all smooth manifold axioms {\emph{except the Hausdorff requirement}}.  This relaxation is customary in the literature because of the fact that smooth Lie algebroids on $M$ often integrate to smooth, but non-Hausdorff, Lie groupoids (see, for example,~\cite{MR1973056, MR2012261}). One of the key points of this paper is that we identify precisely which of the groupoids we construct are actually Hausdorff.  For more details concerning the separation axiom for Lie groupoids, see~\cite{MR2592728}.
 
 \subsubsection{Notation}
A Lie groupoid $(\Gg, M, s, t, m, \id)$, or $\Gg\rightrightarrows M$ for short, is defined as follows.  The space of arrows $\Gg$ is a smooth but possibly non-Hausdorff manifold, equipped with smooth submersions $s, t$ to the manifold of objects $M$, called the \defw{source} and \defw{target} maps, respectively.  The smooth multiplication map $m$ is defined on the manifold of composable arrows, defined by the fiber product 
\[
\Gg^{(2)} \coloneqq \Gg {_{t}\times_s} \Gg = \{(g, h) \in \Gg \times \Gg ~|~ t(g) = s(h) \},
\]
and the identity arrow for each object is given by a smooth embedding $\id$, so that the groupoid structure maps may be displayed as follows:
\[
\xymatrix{
\Gg^{(2)} \ar[r]^-{m}		& \Gg \ar@/_1.2pc/[r]_-{t} \ar@/^1.2pc/[r]^-{s}	& M \ar[l]_{\id}}.
\]
These maps satisfy the expected compatibility conditions for a category where all morphisms have inverses.

For $x,y\in M$, the space $s^{-1}(x)\cap t^{-1}(y)$ of arrows from $x$ to $y$ is denoted by $\Gg(x,y)$ and, in analogy with group actions, $\Gg_x \coloneqq \Gg(x,x)$ is called the \defw{isotropy group} at $x$. Also, as in the case of a group action, the manifold $M$ obtains an equivalence relation
\[
x\sim y  \Leftrightarrow y\in t(s^{-1}(x)),
\]  
partitioning $M$ into equivalence classes called \defw{orbits} of the groupoid.

\subsubsection{Lie groupoid quotients}

We say that a subgroupoid $\Nn\rightrightarrows L$ of the Lie groupoid $\Gg\rightrightarrows M$ is a \emph{Lie subgroupoid} when the inclusions $\Nn\subset \Gg$ and $L\subset M$ are smooth embeddings.  A subgroupoid $\Nn\rightrightarrows L$ of $\Gg\rightrightarrows M$ is called \emph{wide} when $L = M$; such a wide subgroupoid is \emph{normal} if for all $g\in \Gg(x,y)$, 
\[
g \Nn_x g^{-1} = \Nn_y.
\]
A normal subgroupoid defines an equivalence relation $R\subset \Gg\times\Gg$ via 
\begin{equation}\label{eqrel}
R \coloneqq \{(g, g') \in \Gg \times \Gg ~|~ \Nn g = \Nn g'\},
\end{equation}
whose equivalence classes are the right cosets $\Gg / \Nn$.  Finally, we say that the subgroupoid is \emph{totally disconnected} when $\Nn(x,y)$ is empty for $x\neq y$.  We now describe the quotient construction for Lie groupoids, following \cite[Theorem 3.3]{MR1078178}, which treats Hausdorff Lie groupoids but is easily extended to the general case:
\begin{theorem}[Higgins-Mackenzie~\cite{MR1078178}] \label{Prop: LieGpdQuotient}
Let $\Gg \rightrightarrows M$ be a Lie groupoid and $\Nn\rightrightarrows M$ a totally disconnected normal Lie subgroupoid.  Then the quotient $\Gg / \Nn\rightrightarrows M$ is a Lie groupoid. Furthemore, if $\Gg$ is Hausdorff, then the quotient groupoid is Hausdorff if and only if $\Nn$ is closed in $\Gg$.
\end{theorem}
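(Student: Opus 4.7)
The plan is to build the smooth structure on $\Gg/\Nn$ by exploiting the total disconnectedness of $\Nn$, then verify that the groupoid operations descend using normality, and finally characterize Hausdorffness.

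First, I would observe that since $\Nn(x,y)=\emptyset$ for $x\neq y$, the source and target maps on $\Nn$ agree, and since $\Nn$ is a Lie subgroupoid of $\Gg$ whose source is a submersion onto $M$, each isotropy $\Nn_x$ is a zero-dimensional submanifold of $\Gg$. In other words, $s|_\Nn : \Nn \to M$ is a local diffeomorphism, and $\Nn$ is a bundle of discrete groups. Consequently every local section $\sigma:U\to\Nn$ over an open $U\subset M$ induces a local diffeomorphism $g\mapsto \sigma(t(g))\cdot g$ of $t^{-1}(U)\subset\Gg$, and the pseudogroup generated by such maps has orbit set equal to $\Gg/\Nn$. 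This endows $\Gg/\Nn$ with a smooth, possibly non-Hausdorff, manifold structure for which the projection $q:\Gg\to\Gg/\Nn$ is itself a local diffeomorphism.

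Next, I would check that the groupoid structure descends. Because $\Nn$ is totally disconnected, $t(ng)=t(n)=s(n)=t(g)$ and $s(ng)=s(g)$, so source and target descend to well-defined submersions $\Gg/\Nn\to M$. Normality handles multiplication: if $g_i'=n_i g_i$ with $n_i\in\Nn$, then
\[
g_1'g_2' \;=\; n_1(g_1 n_2 g_1^{-1})g_1 g_2 \;\in\; \Nn\cdot(g_1 g_2),
\]
since $g_1 n_2 g_1^{-1}\in\Nn$ by normality. Inversion is handled similarly via $(ng)^{-1}=(g^{-1}n^{-1}g)g^{-1}$. Smoothness of the descended structure maps follows from smoothness on $\Gg$ together with the étale property of $q$.

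For the Hausdorff statement, note that when $q$ is étale, $\Gg/\Nn$ is Hausdorff if and only if the relation $R$ of~\eqref{eqrel} is closed in $\Gg\times\Gg$. The map $\Phi:\Nn\times_M\Gg\to\Gg\times\Gg$, $(n,g)\mapsto(ng,g)$, is a diffeomorphism onto $R$ with smooth inverse $(g_1,g_2)\mapsto(g_1 g_2^{-1},g_2)$. If $\Nn$ is closed in $\Gg$, any convergent sequence $(n_k g_k, g_k)\to(h_1,h_2)$ in $R$ forces $n_k=(n_k g_k)g_k^{-1}\to h_1 h_2^{-1}\in\Nn$, whence $(h_1,h_2)\in R$. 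Conversely, if $R$ is closed, then since $\id(M)$ is closed in the Hausdorff manifold $\Gg$, the subgroupoid $\Nn$ is identified with the closed slice $R\cap(\Gg\times\id(M))$ and is therefore closed. The main obstacle is the first step: producing a smooth quotient structure in the possibly non-Hausdorff setting, where the standard Godement-type criterion is unavailable because $R$ need not be a closed submanifold. The pseudogroup viewpoint, which uses that $\Nn\to M$ is étale to realize cosets as local orbits of diffeomorphisms of $\Gg$, is the required substitute.
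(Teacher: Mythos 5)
Your overall strategy (descend the structure maps via normality, then characterize Hausdorffness through the graph $R$ of the coset relation) is sound, and your analysis of $R$ via the diffeomorphism $\Nn\times_M\Gg\to R$, $(n,g)\mapsto(ng,g)$, matches the paper's observation that $\Nn$ is embedded/closed iff $R$ is. But there are two genuine gaps in the first step.

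First, your claim that each isotropy group $\Nn_x$ is zero-dimensional does not follow from the hypotheses. ``Totally disconnected'' in this paper means only that $\Nn(x,y)=\varnothing$ for $x\neq y$; it does not mean the fibres are discrete. A submersion $s|_\Nn:\Nn\to M$ has fibres of dimension $\dim\Nn-\dim M$, which can be positive — e.g.\ a bundle of positive-dimensional Lie groups inside $\Gg$ is a totally disconnected normal Lie subgroupoid, and the theorem is meant to cover it (discreteness is imposed only later, in Theorem~\ref{Thm: Gpd-NormalSubgpd}). So your pseudogroup-of-local-sections construction, which needs $\Nn\to M$ to be \'etale, does not prove the statement in the generality in which it is asserted.

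Second, even in the discrete case, the step ``the pseudogroup generated by these local diffeomorphisms has orbit set $\Gg/\Nn$, hence $\Gg/\Nn$ is a smooth manifold'' is exactly the point that needs proof. Orbit spaces of pseudogroups of local diffeomorphisms are not manifolds in general (think of an irrational rotation); what makes the quotient locally Euclidean here is that $\Nn$ is an \emph{embedded} submanifold of $\Gg$, so that orbits do not accumulate and $q$ is injective on small opens. Packaging this is precisely the content of Godement's criterion, and your stated reason for avoiding it — that it is ``unavailable because $R$ need not be closed'' — is a misreading: the version the paper invokes (\cite[Theorem II.3.12.2]{MR2179691}) only requires $R$ to be an embedded wide Lie subgroupoid of the pair groupoid $\Gg\times\Gg$ to produce a possibly non-Hausdorff smooth quotient, with closedness of $R$ entering only in the Hausdorff refinement. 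Applying that criterion to $R\cong\Nn\times_M\Gg$, as the paper does, handles both the non-discrete case and the local-Euclidean issue in one stroke; your Hausdorffness argument then goes through essentially as written.
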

\begin{proof}[Sketch of proof:]  The fact that $\Nn$ is normal ensures that $\Gg/\Nn$ is a groupoid, and since $\Nn$ is totally disconnected, the quotient groupoid has the same space of objects $M$ as $\Gg$, as described in~\cite{MR1078178}.  

To obtain smoothness of $\Gg/\Nn$, we use a theorem of Godement~\cite[Theorem II.3.12.2]{MR2179691}, which states that if $X$ is a possibly non-Hausdorff smooth manifold, and $R\subset X\times X$ is an equivalence relation, then $X/R$ is a possibly non-Hausdorff smooth manifold if and only if $R$ is a wide Lie subgroupoid of the pair groupoid $X\times X$.  Furthermore, if $X$ is Hausdorff, then $X/R$ is Hausdorff if and only if $R\subset X\times X$ is closed. 

To apply this to the case at hand, note that $\Nn$ is smooth, embedded and closed if and only if the graph of its equivalence relation $R$ in~\eqref{eqrel} has the respective property. Applying Godement's result to $X=\Gg$, we obtain the required smoothness of the quotient, as well as the Hausdorff condition.     The verification that the groupoid operations are smooth is identical to that in~\cite{MR1078178}.
\end{proof}

\subsubsection{The Lie functor}\label{liefunctor}
Just as a Lie group determines a Lie algebra, the Lie groupoid $\Gg\rightrightarrows M$ determines a Lie algebroid $A$, which is a vector bundle over $M$ equipped with a Lie bracket on its sections as well as a bracket-preserving bundle map $a:A\to TM$, satisfying the Leibniz rule
\[
[X,fY] = f[X,Y] +  a(X)(f)Y,
\]
for all sections $X, Y$ of $A$ and functions $f$ on $M$.  The functor which associates a Lie algebroid to a Lie groupoid, is defined as follows (see~\cite{MR0216409} for details):

\begin{definition} 
The \emph{Lie functor} associates, to any Lie groupoid $\Gg\rightrightarrows M$, the Lie algebroid
\[
\Lie(\Gg) = \id^*\ker\left(Ts:T\Gg \to TM\right),
\]
with morphism to $TM$ given by the restriction of the derivative $Tt$ to $\ker (Ts)$, and bracket defined by the Lie bracket on left-invariant vector fields.  For any groupoid homomorphism $\Psi:\Gg\to \Gg'$, we have the induced morphism of Lie algebroids 
\[
\Lie(\Psi) = T\Psi|_{\Lie(\Gg)}: \Lie(\Gg)\to \Lie(\Gg').
\] 
If a Lie algebroid $A$ is equipped with an isomorphism $A\cong \Lie(\Gg)$, we say that $\Gg$ is an \emph{integration} of, or \emph{integrates}, $A$. 
\end{definition}

The main purpose of this paper is to construct and classify Lie groupoids integrating two types of Lie algebroids.  The first is the log tangent bundle of a hypersurface, as in Definition~\ref{logtgt}.  The second is the Lie algebroid of a log symplectic structure, which is a special case of the Lie algebroid defined by any Poisson structure:
\begin{definition}\label{poisalg}
The Lie algebroid $T^*_\pi M$ of a Poisson manifold $(M,\pi)$ is the cotangent bundle $T^*M$, equipped with the \emph{Koszul bracket}
\[
[\alpha,\beta] = L_{\pi(\alpha)}\beta - L_{\pi(\beta)}\alpha - d\pi(\alpha,\beta),
\]
as well as the bundle map $\pi:T^*M\to TM$.
\end{definition}
The tautological symplectic form on $T^*M$ then endows any integration of $T^*_\pi M$ with a symplectic form compatible with the groupoid structure -- for this reason, the integrations of $T^*_\pi M$ are called \emph{symplectic groupoids}.  It will be useful to view symplectic groupoids as a special case of Poisson groupoids, which we now define. 

\begin{definition} \label{def: PoissonAgd}
A \defw{Poisson groupoid} is a Lie groupoid $\Gg$, equipped with a Poisson structure $\Pi$, such that the graph of multiplication map
\[
\Gamma_m = \{(g, h, m(g,h)) \in \Gg \times \Gg \times {\Gg} \}
\]
is coisotropic with respect to $\Pi \oplus \Pi \oplus (-\Pi)$.  In the case that $\Pi$ is non-degenerate, $\Omega = \Pi^{-1}$ defines a symplectic form such that $\Gamma_m$ is Lagrangian, and then $(\Gg, \Pi)$ is called a \defw{symplectic groupoid}. 
\end{definition}

If $(\Gg, \Omega)$ is a symplectic groupoid over $M$, then the Poisson structure $\Omega^{-1}$ is invariant along the source and target fibers, so that it descends to a Poisson structure $\pi \coloneqq Ts(\Omega^{-1}) = - Tt(\Omega^{-1})$ on $M$.  The groupoid $\Gg\rightrightarrows M$ is then an integration of the Poisson algebroid $T^*_\pi M$. For this reason, symplectic groupoids provide a special class of symplectic realizations of Poisson manifolds, as discussed in~\S\ref{intr}.

	\subsection{The category of integrations} \label{sub: IntCat}

	A finite-dimensional Lie algebra $\mathfrak{g}$ determines a lattice $\Lambda(\mathfrak{g})$ of connected Lie groups which integrate it: $G'$ covers $G$ in $\Lambda(\mathfrak{g})$ if there is a morphism $G'\to G$ inducing the identity map on $\mathfrak{g}$.  The initial object of this lattice is the simply-connected integration $\widetilde G$; all other groups in the lattice are quotients of $\widetilde G$ by discrete subgroups of its center; and the terminal object, when it exists, is called the \emph{adjoint form} of the group.

When a Lie algebroid is integrable, its lattice, or more properly, its category of integrating Lie groupoids has similar properties to those described above, but with some important differences. For instance, there is the question of which integrations are Hausdorff.  Also, unlike the case of Lie groups, where morphisms among integrations are covering maps, for Lie groupoids these morphisms are only local diffeomorphisms, which may fail to be covering maps. 

We now define the category of integrations of a Lie algebroid; for this we need the groupoid analog of (simple-)connectedness.
\begin{definition} \label{Defn: SscGpd} 
Any Lie groupoid $\Gg \rightrightarrows M$ has a well-defined subgroupoid $\Gg^c\rightrightarrows M$ all of whose source fibres are connected.  If $\Gg=\Gg^c$, we say that $\Gg$ is \defw{source-connected}.

If the source fibres of a source-connected groupoid are also simply connected, then the groupoid is called \defw{source-simply-connected}, or \defw{ssc} for short.    
\end{definition}

By a result of Moerdijk-Mr\v{c}un~\cite{MR2012261}, if a Lie algebroid $A$ is integrable, then it has a source-simply-connected integration $\Gg^{ssc}$, unique up to a canonical isomorphism.  Their results also show that any source-connected integration $\Gg$ of $A$ receives a unique morphism
\begin{equation}\label{surjgrp}
p:\Gg^{ssc}\to \Gg,
\end{equation}
which is a surjective local diffeomorphism.  As a result, $\Gg^{ssc}$ may be viewed as the initial object of a category of integrations.

\begin{definition}
To a Lie algebroid $A$ over $M$, we associate two categories 
\[
\Lint^\Hh(A)\subset \Lint(A):
\]
\begin{enumerate}
\item Objects of $\Lint(A)$ are pairs $(\Gg, \phi)$, where $\Gg$ is a source-connected Lie groupoid over $M$ and $\phi: \Lie(\Gg) \rightarrow A$ is an isomorphism covering the identity on $M$; 
\item A morphism from $(\Gg, \phi)$ to $(\Gg', \phi')$ is a Lie groupoid morphism $\psi: \Gg \rightarrow \Gg'$ such that $\phi = \phi' \circ \Lie(\psi)$.   
\end{enumerate}
The subcategory of Hausdorff integrations is then denoted by $\Lint^\Hh(A)$.
When $\Lint(A)$ is nonempty, there is an initial object, called the \emph{ssc} groupoid $\Gg^{ssc}$.  A terminal object, when it exists, is called the \emph{adjoint} groupoid $\Gg^{adj}$.   It follows from~\eqref{surjgrp} that there is at most one morphism between any two objects, and that this morphism is a surjective local diffeomorphism.
\end{definition}

In analogy with the theory of covering spaces, each of the integrations in $\Lint(A)$ may be described as a quotient of the source-simply-connected integration.  This defines an equivalence between integrations and normal subgroupoids of $\Gg^{ssc}$.    

\begin{theorem} \label{Thm: Gpd-NormalSubgpd}
A ssc integration $\Gg^{ssc}$ of the Lie algebroid $A$ defines an equivalence of categories between $\Lint(A)$ and the poset $\Lnorm(\Gg^{ssc})$ of discrete, totally disconnected, normal Lie subgroupoids of $\Gg^{ssc}$, via 
\[
\mathbf{N}: \Lint(A) \rightarrow \Lnorm(\Gg^{ssc}),
\]
which takes a groupoid $\Gg$ to the kernel of the canonical morphism $p:\Gg^{ssc}\to \Gg$, and
\[
\mathbf{G}: \Lnorm(\Gg^{ssc}) \rightarrow \Lint(A),
\]
which takes the normal subgroupoid $\Nn\vartriangleleft\Gg^{ssc}$ to the quotient groupoid $\Gg^{ssc} / \Nn$.  

In the case that $\Gg^{ssc}$ is itself Hausdorff, then the equivalence identifies $\Gpd^\Hh(A)$ with the subposet $\Lnorm^\Hh(A)\subset \Lnorm(A)$ of closed subgroupoids.
\end{theorem}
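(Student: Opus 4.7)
The plan is to model the argument on the classical Lie group case, leveraging two key inputs already in the paper: the Moerdijk--Mr\v{c}un existence theorem, which provides $\Gg^{ssc}$ together with the canonical surjective local diffeomorphism $p:\Gg^{ssc}\to\Gg$ of~\eqref{surjgrp} for every $\Gg\in\Lint(A)$; and the Higgins--Mackenzie quotient construction (Theorem~\ref{Prop: LieGpdQuotient}), which manufactures the Lie groupoid $\Gg^{ssc}/\Nn$ whenever $\Nn$ is a totally disconnected normal Lie subgroupoid. The proof then splits into (a) well-definedness of $\mathbf{N}$ and $\mathbf{G}$ as functors of posets, (b) verification that they are mutually inverse, and (c) the Hausdorff refinement.

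For $\mathbf{N}$, let $\Nn:=\ker p$. Since morphisms in $\Lint(A)$ cover the identity on $M$, so does $p$, and hence for any $g\in\Nn$ the equation $p(g)=\id_{s(g)}=\id_{t(g)}$ forces $s(g)=t(g)$, yielding total disconnectedness. Normality is automatic from the fact that $p$ is a groupoid homomorphism. The remaining content is smoothness and discreteness of $\Nn$: because $p$ is a local diffeomorphism and the identity embedding $\id:M\to\Gg$ is a closed embedded submanifold, the preimage $\Nn=p^{-1}(\id(M))$ is an embedded submanifold of $\Gg^{ssc}$ whose intersection with each source fibre is discrete. For $\mathbf{G}$, Theorem~\ref{Prop: LieGpdQuotient} produces the Lie groupoid $\Gg^{ssc}/\Nn\rightrightarrows M$; the quotient map $q$ is a local diffeomorphism because $\Nn$ is discrete and object-preserving, so $\Lie(q):\Lie(\Gg^{ssc})\to\Lie(\Gg^{ssc}/\Nn)$ is an isomorphism, and composing with the structural identification $\Lie(\Gg^{ssc})\cong A$ makes $\Gg^{ssc}/\Nn$ an object of $\Lint(A)$. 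Source-connectedness is inherited since $q$ is a surjective submersion.

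For the mutual inverse property, one direction is tautological: $\mathbf{N}\circ\mathbf{G}(\Nn)=\ker q=\Nn$. For the other, given $(\Gg,\phi)\in\Lint(A)$ with canonical $p:\Gg^{ssc}\to\Gg$, the universal property of the quotient groupoid yields a factorization $p=\bar p\circ q$, where $q:\Gg^{ssc}\to\Gg^{ssc}/\ker p$ is the quotient map and $\bar p$ is an injective groupoid homomorphism. Since $p$ is a surjective local diffeomorphism and $q$ is one, $\bar p$ is also a surjective local diffeomorphism, hence a diffeomorphism by injectivity; this is the Lie groupoid analogue of the first isomorphism theorem and supplies the natural isomorphism $\mathbf{G}\circ\mathbf{N}(\Gg)\cong\Gg$. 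Functoriality on morphisms is then forced: any $\psi:\Gg\to\Gg'$ in $\Lint(A)$ satisfies $p_{\Gg'}=\psi\circ p_\Gg$, so $\mathbf{N}(\psi)$ is simply the inclusion $\ker p_\Gg\subset\ker p_{\Gg'}$ in $\Lnorm(\Gg^{ssc})$, which is well-defined since morphisms in $\Lint(A)$ are unique.

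The Hausdorff refinement is then immediate from the second half of Theorem~\ref{Prop: LieGpdQuotient}: assuming $\Gg^{ssc}$ is Hausdorff, $\mathbf{G}(\Nn)=\Gg^{ssc}/\Nn$ is Hausdorff precisely when $\Nn$ is closed in $\Gg^{ssc}$; conversely, if $\Gg\in\Lint^\Hh(A)$ then $\id(M)$ is closed in $\Gg$ and hence $\mathbf{N}(\Gg)=p^{-1}(\id(M))$ is closed in $\Gg^{ssc}$. The only step which I expect to require real care is verifying that $\ker p$ is an \emph{embedded} (as opposed to merely immersed) Lie subgroupoid, since Lie groupoids in this paper need not be Hausdorff; this reduces, via the local diffeomorphism property of $p$, to the observation that the identity bisection $\id(M)$ is always a closed embedded submanifold of $\Gg$, regardless of the separation properties of $\Gg$.
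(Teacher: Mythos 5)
Your proposal is correct and follows essentially the same route as the paper's own (much sketchier) proof: both directions rest on the canonical morphism $p:\Gg^{ssc}\to\Gg$ from~\eqref{surjgrp} together with the Higgins--Mackenzie quotient Theorem~\ref{Prop: LieGpdQuotient}, and you are merely filling in the first-isomorphism-theorem step and the embeddedness/discreteness checks that the paper leaves implicit. The one inaccuracy is your closing claim that the identity bisection $\id(M)$ is closed in $\Gg$ \emph{regardless} of separation properties --- this fails for non-Hausdorff integrations (the paper's own Proposition~\ref{Prop: MonthubertLocal} produces quotients by non-closed $\Nn$ in which a sequence of identity arrows also converges to a non-identity arrow) --- but the error is harmless: embeddedness of $\ker p = p^{-1}(\id(M))$ needs only that $p$ is a local diffeomorphism and that $\id(M)$ is embedded, and closedness of $\id(M)$ is actually required only in the Hausdorff refinement, where $\Gg$ is assumed Hausdorff and the claim does hold.
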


\begin{proof}
Since the morphism $p: \Gg^{ssc} \rightarrow \Gg$ is base-preserving, the normal subgroupoid $\mathbf{N}(\Gg) = \ker(p)$ is totally disconnected. Since $\Lie(\Gg^{ssc}) = \Lie(\Gg)$, the subgroupoid $\mathbf{N}(\Gg)$ is discrete.  If $\Gg^{ssc}$ is Hausdorff, then by Theorem~\ref{Prop: LieGpdQuotient}, $\mathbf{N}(\Gg)$ is closed.

Conversely, by Theorem~\ref{Prop: LieGpdQuotient}, since $\Nn$ is a normal, totally disconnected Lie subgroupoid, $\mathbf{G}(\Nn) = \Gg^{ssc} / \Nn$ is a Lie groupoid over the same base as $\Gg^{ssc}$. Since $\Nn$ is discrete, it follows that $\Gg^{ssc} / \Nn$ integrates $A$.  Finally, if $\Gg^{ssc}$ is Hausdorff, then $\Gg^{ssc}/\Nn$ is Hausdorff iff $\Nn$ is closed.  
\end{proof}

%
%
%

For the Lie algebroids $T_DM$, $T^*_\pi M$ under consideration in this paper, the category of integrations described above is somewhat simplified, in that for any source-connected integration $(\Gg,\phi)$ of $A$, the isomorphism $\phi:\Lie(\Gg)\to A$ is uniquely determined by the groupoid $\Gg$, and may be ignored. This derives from the fact that the Lie algebroids are~\emph{almost injective}~\cite{MR1881646}:
\begin{definition}
A Lie algebroid is called \emph{almost injective} when its anchor map $a:A\to TM$ induces an injection of sheaves of local sections.  In other words, the anchor map is an injective bundle map over an open dense subset of $M$.
\end{definition}
A Lie algebroid automorphism $\phi:A\to A$ covering $\id_M$ must commute with the anchor map, i.e. $a\circ \phi = \phi$, and so we immediately obtain the following rigidity result:
\begin{prop} \label{Proposition: SubsheafLieAlgbroid}
If $A$ is an almost injective Lie algebroid over $M$, then the only automorphism of $A$ covering $\id_M$ is the identity map.
\end{prop}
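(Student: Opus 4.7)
The plan is to exploit the hint supplied by the paper: any Lie algebroid morphism covering $\id_M$ must intertwine the anchors, so $a \circ \phi = a$. (The inequality as written in the excerpt, $a\circ\phi=\phi$, is a typo — the correct statement, which follows immediately from the definition of a Lie algebroid morphism, is $a\circ\phi = a$.) Once this is in hand, the proposition is essentially a pointwise statement combined with a density argument.

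First I would fix a point $x \in M$ lying in the open dense subset $U\subset M$ on which the anchor $a:A\to TM$ is an injective bundle map, which exists by the definition of almost injectivity. For $v\in A_x$ with $x\in U$, the identity $a\circ \phi = a$ gives $a(\phi(v)-v) = 0$ in $T_xM$, so injectivity of $a_x$ forces $\phi(v) = v$. Thus $\phi|_{A|_U}$ is the identity.

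Next I would extend this conclusion to all of $M$ by continuity. Pick any $y\in M$ and choose a local frame $e_1,\dots,e_r$ of $A$ on a neighbourhood $V$ of $y$. Writing $\phi(e_i) = \sum_j \phi_{ij} e_j$ for smooth functions $\phi_{ij}$ on $V$, the previous step shows $\phi_{ij} = \delta_{ij}$ on the dense open set $U\cap V$; by continuity, the same holds on all of $V$, so $\phi$ acts as the identity on the frame and hence on $A|_V$. Since $y$ was arbitrary, $\phi = \id_A$.

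There is no serious obstacle here: the only thing to verify carefully is that being ``almost injective'' in the sheaf sense really does imply pointwise injectivity of $a$ on an open dense set, which is immediate since the locus where $a$ fails to be fiberwise injective is the closed set where $\rk a < \rk A$, and almost injectivity forces this locus to have empty interior (otherwise one could produce a nonzero local section killed by $a$).
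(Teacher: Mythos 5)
Your proof is correct and follows essentially the same route as the paper, which simply notes that $a\circ\phi=a$ (you rightly flag the typo) and treats the conclusion as immediate from almost injectivity. Your added density-and-continuity argument just makes explicit the step the paper leaves implicit.
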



\begin{example} \label{ex: tanggpd}
The tangent Lie algebroid $TM$ of a connected manifold $M$ has ssc integration given by the fundamental groupoid $\Pi_1(M)$, and has adjoint groupoid given by the pair groupoid $\Pair(M)=M\times M$.  We determine all integrations of $TM$ as follows.  By Theorem~\ref{Thm: Gpd-NormalSubgpd}, any integration $\Gg$ of $TM$ may be described as a quotient of $\Pi_1(M)$ by a discrete, totally disconnected normal Lie subgroupoid $\Nn$.  

Since $\Nn$ is totally disconnected, it is contained in the isotropy subgroupoid of $\Pi_1(M)$, which is discrete, so that $\Nn$ is automatically discrete.  The fact that $\Nn$ is normal implies that $\Nn$ is determined by its intersection with the isotropy group at any point $x_0\in M$, which is simply the fundamental group $\Pi_1(x_0,x_0) = \pi_1(M, x_0)$ based at $x_0$.  Hence $\Nn$ is uniquely determined by the choice of a normal subgroup of the fundamental group of $M$, and so the category of integrations is equivalent to the lattice of normal subgroups of the fundamental group of $M$:  
\[
\Lint^\Hh(TM) \cong \Lint(TM) \cong \Lnorm(\pi_1(M)).
\]
The integrations of $TM$ are all Hausdorff, since the normal subgroupoids described above are closed in $\Pi_1(M)$.
\end{example}

Given a Poisson manifold $(M, \pi)$, the ssc integration $\Gg^{ssc}$ of the Lie algebroid $T^*_\pi M$ inherits a natural multiplicative symplectic structure $\Omega$, making $(\Gg^{ssc}, \Omega)$ a symplectic groupoid for $(M, \pi)$. In general, however, other integrations of $T^*_\pi M$ need not admit multiplicative symplectic structures.  On the other hand, multiplicative symplectic forms behave well under pullbacks, in the following sense.
\begin{prop} \label{prop: multisymp}
Let $\phi: \Gg' \rightarrow \Gg$ be a morphism between groupoids integrating $T^*_\pi M$. If $\Omega\in\Omega^2(\Gg)$ is multiplicative and symplectic, then so is $\phi^*\Omega\in\Omega^2(\Gg')$. 
\end{prop}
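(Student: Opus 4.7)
The plan is to verify the three properties defining a multiplicative symplectic form — closedness, multiplicativity, and nondegeneracy — for $\phi^*\Omega$, invoking the fact (recorded just before the proposition) that a morphism in $\Lint(T^*_\pi M)$ is automatically a surjective local diffeomorphism.

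Closedness of $\phi^*\Omega$ is immediate from $d\circ\phi^* = \phi^*\circ d$ applied to $d\Omega=0$. Multiplicativity is the algebraic identity $m^*\Omega = \mathrm{pr}_1^*\Omega + \mathrm{pr}_2^*\Omega$ on $\Gg^{(2)}$, and I would deduce it for $\phi^*\Omega$ from the fact that $\phi$ is a Lie groupoid homomorphism: the map $\phi\times\phi$ restricts to a smooth map $(\Gg')^{(2)}\to\Gg^{(2)}$, and the commutative squares
\[
\phi\circ m_{\Gg'} = m_\Gg\circ(\phi\times\phi),\qquad \phi\circ\mathrm{pr}_i = \mathrm{pr}_i\circ(\phi\times\phi)
\]
let me pull back the multiplicativity equation for $\Omega$ by $\phi\times\phi$ and obtain the corresponding equation for $\phi^*\Omega$.

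The substantive step is nondegeneracy, which does not hold for pullbacks by arbitrary maps. Here the key input is the discussion following~\eqref{surjgrp}: because $\Gg'$ and $\Gg$ are both source-connected integrations of the same Lie algebroid $T^*_\pi M$, and because Proposition~\ref{Proposition: SubsheafLieAlgbroid} guarantees there is no ambiguity in the identification of Lie algebroids, the morphism $\phi$ is the unique base-preserving homomorphism between them, and is therefore a surjective local diffeomorphism. Consequently $T_g\phi$ is an isomorphism at every $g\in\Gg'$, and the pullback of a nondegenerate 2-form remains nondegenerate pointwise.

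The main potential obstacle would have been nondegeneracy, since multiplicative forms can easily become degenerate under non-étale maps; it is resolved entirely by the earlier observation that morphisms in $\Lint(A)$ are local diffeomorphisms, so no further Poisson-geometric input is required.
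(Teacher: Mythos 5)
Your proof is correct, and it is the standard argument the authors evidently had in mind (the paper states Proposition~\ref{prop: multisymp} without proof). All three points check out: closedness and multiplicativity pull back formally along the groupoid homomorphism, and nondegeneracy is exactly where you need the earlier observation that a morphism in $\Lint(T^*_\pi M)$ between source-connected integrations is a surjective local diffeomorphism, so that $T_g\phi$ is invertible at every point.
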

%
%
In the following section, we show that for proper log symplectic manifolds, the adjoint integration has a natural multiplicative symplectic form, i.e. is a symplectic groupoid. By Proposition~\ref{prop: multisymp}, all other integrations are symplectic for this class of Poisson manifolds.

\section{Birational construction of adjoint groupoids} \label{Section: Blowup}
In this section, we systematically develop a blow-up operation in the category of Poisson groupoids, and use this operation to contruct the adjoint symplectic groupoid of a proper log symplectic manifold. This groupoid is a special case of the class of groupoids constructed by Debord in~\cite{MR1881646}, but the construction is different, inspired by and extending the  
work of Melrose~\cite{MR1348401} and Monthubert~\cite{MR1600121}.  We hope that the explicit and global nature of the construction will make the (adjoint) symplectic groupoid more accessible from a geometric point of view. 

	\subsection{Real projective blow-up}\label{blowp}

	Let $M$ be a real smooth manifold, with a closed submanifold $L\subset M$, such that
\[
\cod(L) \geq 2.
\]
We denote by $\blowup{M}{L}$ the real projective blow-up of $M$ along $L$. Recall that to construct $\blowup{M}{L}$ from $M$, we replace $L$ by the projectivisation of its normal bundle, $\mathbb{P}(NL)$, which then defines a hypersurface $E\subset \blowup{M}{L}$ called the exceptional divisor. The blow-down map
\[
p: \blowup{M}{L} \rightarrow M
\]
is a diffeomorphism away from $E$ and coincides with the bundle projection $\mathbb{P}(NL) \rightarrow L$ upon restriction to the exceptional divisor. 

Any submanifold $S\subset M$ having clean\footnote{Submanifolds $S, L$ have clean intersection when $S\cap L$ is a submanifold and $T(S\cap L) = TS\cap TL$.} intersection with $L$ may be ``pulled back'' to $\blowup{M}{L}$, by forming the \defw{proper transform} (a.k.a the strict transform)
\[
\thickbar{S} \coloneqq\overline{p^{-1}(S \backslash L)},
\]
where the closure is taken in $\blowup{M}{L}$.  The proper transform $\thickbar{S}$ is itself a submanifold, naturally isomorphic to $\blowup{S}{L\cap S}$.  Of course, if $L\cap S$ has codimension 1 in $S$, the blowdown map restricts to a diffeomorphism $\thickbar{S}\to S$. 

The projective blow-up $\blowup{M}{L}$ is characterized by a universal property~(\cite{MR0463157}, Prop. II.7.14): a smooth map $f:X\to M$ has a unique lift $\widetilde{f}$ to the blowup if and only if the pullback $f^*\mathcal{I}_L$ of the ideal sheaf of $L$ defines a line bundle.  
\begin{equation} \label{Diagram: Universal-BlowUp}
\begin{aligned}
\xymatrix{ & \blowup{M}{L} \ar[d]^-{p} \\
X \ar[r]_-{f} \ar@{.>}[ur]^-{\widetilde{f}}	& M }
\end{aligned}
\end{equation}
A special case of this result, more useful for our purposes, is the following.

\begin{prop} \label{univblowup2}
Let $L\subset M$ be a closed submanifold of codimension $\geq 2$.  If $f:X\to M$ is a smooth map and $Y=f^{-1}(L)\subset X$ is a hypersurface such that the bundle map 
$$Nf: NY\to f^*NL$$ 
induced by the derivative $Tf$ is injective, then there exists a unique smooth map $\widetilde{f}:X\to\blowup{M}{L}$ such that $f=p\circ \widetilde{f}$.
\end{prop}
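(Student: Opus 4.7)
The plan is to reduce the proposition to the universal property of the projective blow-up recorded in Diagram~\eqref{Diagram: Universal-BlowUp}: it suffices to verify that the pullback ideal sheaf $f^*\mathcal{I}_L$ defines a line bundle on $X$, since then the universal property produces the unique smooth lift $\widetilde{f}$ automatically. Away from $Y$, the pullback ideal is the unit ideal and there is nothing to check, so the entire argument concentrates near an arbitrary point $y \in Y$.

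Near $y$, I would work in local coordinates. Choose defining functions $g_1,\ldots,g_k$ cutting out $L$ near $f(y)$, where $k = \codim L \geq 2$, and a local defining function $h$ for the hypersurface $Y$ near $y$. Since each pullback $f^*g_i$ vanishes on $Y$, Hadamard's lemma supplies smooth factorizations $f^*g_i = h\, u_i$, so that locally
\[
f^*\mathcal{I}_L \;=\; h\cdot (u_1,\ldots,u_k).
\]
This presents $f^*\mathcal{I}_L$ as a line bundle, canonically isomorphic to $\mathcal{I}_Y$, precisely when the functions $u_1,\ldots,u_k$ generate the unit ideal at $y$, i.e.\ when at least one $u_i(y)$ is nonzero.

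The hypothesis on $Nf$ enters exactly to guarantee this non-vanishing. Dualizing the assumed injection $Nf : NY \to f^*NL$ yields a surjective conormal map $N^*f : f^*N^*L|_Y \to N^*Y$. Tracing through the identifications, the conormal class of $g_i$ at $f(y)$ is sent by $N^*f$ to the class of $f^*g_i$ in $N^*Y_y = \mathcal{I}_Y/\mathcal{I}_Y^2|_y$, which by the Hadamard factorization is exactly $u_i(y) \cdot [h]$. Surjectivity of $N^*f$ at $y$ thus forces some $u_i(y)$ to be nonzero, as required.

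I expect the only subtle point to be this dictionary between injectivity of $Nf$ and the unit-ideal condition on the $u_i$, but it is just a routine identification of conormal classes with first-order data of defining functions. Once that translation is in place, the universal property of $\blowup{M}{L}$ immediately delivers the unique lift $\widetilde{f}$, completing the argument.
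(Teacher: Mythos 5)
Your proposal is correct and follows essentially the same route as the paper: both reduce to the universal property of the blow-up by showing that injectivity of $Nf$ dualizes to surjectivity of the conormal map $f^*(\mathcal{I}_L/\mathcal{I}_L^2)\to\mathcal{I}_Y/\mathcal{I}_Y^2$, which forces $f^*\mathcal{I}_L=\mathcal{I}_Y$, a line bundle since $Y$ is a hypersurface. Your Hadamard-lemma computation with the factorizations $f^*g_i = h\,u_i$ merely makes explicit the step that the paper asserts in one line.
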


\begin{proof}
If $Nf: NY \rightarrow f^*NL$ is injective, then the dual map
\[
f^*(\mathcal{I}_L / \mathcal{I}_L^2) \rightarrow \mathcal{I}_Y / \mathcal{I}_Y^2
\]
is surjective (here $\mathcal{I}_L$, $\mathcal{I}_Y$ denote the ideal sheaves of $L, Y$ respectively), implying that $f^*(\mathcal{I}_L) = \mathcal{I}_Y$.  But $\mathcal{I}_Y$ is a line bundle when $Y$ is a hypersurface, so by the universal property of the projective blow-up, the result holds.
\end{proof}
\begin{remark}\label{bldninj}
The blow-down map $p:\blowup{M}{L}\to M$ itself satisfies Proposition~\ref{univblowup2}, since $Np:N(p^{-1}(L))\to p^*NL$ is injective; the resulting lift is simply the identity map on $\blowup{M}{L}$.
\end{remark}






	\subsection{Blow-up of Poisson manifolds} \label{Section: Blow-upPoisson}

	Let $(M,\pi)$ be a Poisson manifold, and let $L\subset M$ be a Poisson submanifold.  The normal space $N_p L$ to any point $p\in L$ then inherits a linear Poisson structure, defining a transverse Poisson structure
\begin{equation}\label{tranpois}
\pi_N\in\Gamma(L, N^*L\otimes\wedge^2 NL),
\end{equation}
which exhibits the conormal bundle $N^*L$ as a bundle of Lie algebras.

In~\cite{MR1465521}, Polishchuk observed that in order for the Poisson structure on $M$ to lift to the blow-up $\blowup{M}{L}$, the transverse Poisson structure along $L$ must be \defw{degenerate}, in the following sense.

\begin{definition} \label{degliealg}
The transverse Poisson structure $\pi_N$ of a Poisson submanifold $(L,\pi|_L)\subset (M,\pi)$ is called \emph{degenerate} when 
\[
\pi_N = v\wedge E,
\]
where $v$ is the normal vector field obtained from $\pi_N$ via the contraction $N^*{L}\otimes \wedge^2 NL\to NL$, and $E$ is the Euler vector field on $NL$. 
\end{definition}
\begin{remark}
The Lie algebra on each conormal space $N_p^*L$ induced by a degenerate transverse Poisson structure is either abelian, when $v(p)=0$, or isomorphic to the semidirect product Lie algebra $\RR\ltimes \RR^{n-1}$ associated to the action $a\cdot u = au$ of $\RR$ on $\RR^{n-1}$.
\end{remark}

\begin{theorem}[Polishchuk \cite{MR1465521}] \label{poissonblowup}
Let $(L, \pi|_{L})\subset (M, \pi)$ be a closed Poisson submanifold with degenerate transverse Poisson structure $\pi_N$.  Then there is a unique Poisson structure $\til{\pi}$ on $\blowup{M}{L}$ such that $p_*(\til{\pi}) = \pi$. Furthermore, the exceptional divisor is Poisson if and only if $\pi_N$ vanishes.
\end{theorem}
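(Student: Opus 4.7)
The plan is to reduce the statement to a local coordinate computation in the standard charts of $\blowup{M}{L}$, using the degeneracy hypothesis precisely to cancel the singularities introduced by the inverse substitutions. Uniqueness of $\tilde\pi$ is immediate: $p$ restricts to a diffeomorphism on the dense open set $\blowup{M}{L}\setminus E$, so any smooth bivector satisfying $p_*\tilde\pi=\pi$ is uniquely determined as the continuous extension of $(p|_{\blowup{M}{L}\setminus E})^{-1}_*\pi$, and we need only construct one.

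For existence, I would work near a point of $L$ in coordinates $(x_1,\ldots,x_k,y)$ with $L=\{x=0\}$ and $k=\cod L\geq 2$. Because $L$ is Poisson, the components $\pi^{ij}$ and $\pi^{i\alpha}$ (Latin indices normal, Greek indices tangent) vanish on $L$, so are at least first order in $x$. The degeneracy hypothesis $\pi_N=v\wedge E$ translates to the statement that the linear-in-$x$ part of the normal-normal block has the special form $v^i(y)x^j-v^j(y)x^i$. In the blow-up chart where $x_1$ is the pivot (so $u_1=x_1$ and $u_j=x_j/x_1$ for $j\geq 2$), a direct calculation shows that the Euler field becomes $E=u_1\partial_{u_1}$, so the pullback of $v\wedge E$ is explicitly smooth, of the form $\sum_{j\geq 2}(v^j-v^1 u_j)\,\partial_{u_j}\wedge\partial_{u_1}$. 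The higher-order normal-normal, mixed, and tangent-tangent terms similarly carry enough powers of $x$, equivalently $u_1$, to absorb the $1/u_1$ singularities coming from $\partial_{x_j}=u_1^{-1}\partial_{u_j}$. The same argument applies in the charts where a different normal coordinate serves as pivot, and the uniqueness just noted forces the local constructions to agree on overlaps. Once $\tilde\pi$ is constructed as a smooth bivector, the Poisson identity $[\tilde\pi,\tilde\pi]=0$ holds on the dense complement of $E$, hence everywhere, by continuity of the Schouten bracket.

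For the final assertion, $E$ is a Poisson submanifold iff $\tilde\pi(du_1)$ vanishes on $E=\{u_1=0\}$ in each chart. The mixed, tangent-tangent, and higher-order normal-normal contributions to $\tilde\pi(du_1)$ all vanish on $E$ by their explicit form; only the lifted transverse part contributes, and it restricts to $-\sum_{j\geq 2}v^j(y)\,\partial_{u_j}|_E$. For this to vanish as the fiber coordinates $(u_2,\ldots,u_k)$ vary over the affine chart of $\PP(NL)$, we need $v^j=0$ for $j\geq 2$; combining with the analogous constraint from the other pivot charts gives $v=0$, equivalently $\pi_N=0$. The main obstacle is carrying out the local calculation cleanly enough to expose the sharpness of the degeneracy hypothesis: any leading-order transverse term not of the form $v\wedge E$ produces a genuine $1/u_1$ singularity in the pullback which no other term of $\pi$ can cancel, so the hypothesis cannot be weakened.
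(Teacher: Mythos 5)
The paper does not actually prove this statement: it is imported verbatim from Polishchuk's work \cite{MR1465521}, so there is no internal proof to compare against. Your blind reconstruction is nevertheless correct and is essentially the standard argument: uniqueness from density of $\blowup{M}{L}\setminus E$, existence by checking in each pivot chart that the ideal $\mathcal{I}_L$ pulls back to $(u_1)$, so the $O(u_1)$ mixed block and $O(u_1^2)$ higher-order normal block absorb the $u_1^{-1}$ and $u_1^{-2}$ factors from $\partial_{x_j}=u_1^{-1}\partial_{u_j}$, while the degenerate linear part $v\wedge E$ becomes the manifestly smooth $\sum_{j\geq 2}(v^j-v^1u_j)\,\partial_{u_j}\wedge\partial_{u_1}$; Jacobi then holds by continuity. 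Two minor remarks: the condition for the exceptional divisor to be Poisson is that $\tilde\pi^\sharp(du_1)$ vanish along $\{u_1=0\}$ (not merely be tangent to it), which is what your computation in fact verifies; and a single chart already forces all of $v=0$, since the coefficient $v^j-v^1u_j$ must vanish for every value of the fibre coordinates $u_j$, so the appeal to the other pivot charts is not needed for that step.
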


We will apply the above result only in two special cases, which we detail below. 

\begin{prop} \label{bbsympblowup}
Let $(M_1,\pi_1)$ and $(M_2,\pi_2)$ be log symplectic manifolds with degeneracy loci $D_1$ and $D_2$, respectively. Then the natural Poisson structure $\til{\pi}$ on the blowup $\blowup{M_1 \times M_2}{D_1 \times D_2}$ is log symplectic on  
\[
\widetilde{X} \coloneqq \blowup{M_1 \times M_2}{D_1 \times D_2} \backslash (\thickbar{M_1 \times D_2} \cup \thickbar{D_1 \times M_2}),
\]
with degeneracy locus given by the exceptional divisor $\widetilde{X} \cap \PP (NL)$ in $\til{X}$.
\end{prop}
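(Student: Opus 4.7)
The plan is to apply Polishchuk's theorem (Theorem~\ref{poissonblowup}) to lift $\pi = \pi_1+\pi_2$ to the blow-up, and then to verify log symplecticity on $\widetilde{X}$ via an explicit coordinate computation in the two affine charts covering the exceptional divisor.

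First, I would check the hypothesis of Theorem~\ref{poissonblowup}. In local coordinates $(x_i, y_i^a)$ near a point of $L = D_1\times D_2$ with $D_i = \{x_i=0\}$, the local form for a log symplectic structure gives $\pi_i = x_i\partial_{x_i}\wedge Z_i + W_i$, where $Z_i$ and $W_i$ involve only the $y_i$-directions. The product $\pi$ therefore contains no $\partial_{x_1}\wedge\partial_{x_2}$ term, so the $\wedge^2 NL$-component of $\pi$ along $L$ vanishes identically. Consequently the induced Lie bracket on $N^*L$, and hence the transverse Poisson structure $\pi_N$, is zero. Zero is trivially degenerate (take $v=0$ in Definition~\ref{degliealg}), so Theorem~\ref{poissonblowup} yields a unique Poisson lift $\widetilde\pi$ on $\blowup{M_1\times M_2}{L}$.

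Next, I would compute $\widetilde\pi$ in an affine chart of the blow-up. Using coordinates $(u,v,y_1,y_2)$ with blow-down $x_1 = uv$, $x_2 = v$, the identities $x_1\partial_{x_1}=u\partial_u$ and $x_2\partial_{x_2} = v\partial_v - u\partial_u$ yield
\[
\widetilde\pi = u\partial_u\wedge(Z_1-Z_2) + v\partial_v\wedge Z_2 + W_1 + W_2.
\]
The exceptional divisor in this chart is $E=\{v=0\}$, the proper transform of $D_1\times M_2$ is $\{u=0\}$, and the proper transform of $M_1\times D_2$ lies in the complementary chart (obtained by interchanging the roles of $x_1$ and $x_2$). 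Hence $\widetilde{X}$ meets this chart in $\{u\neq 0\}$, with an analogous description in the other chart.

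Finally, I would establish log symplecticity on $\widetilde{X}$. Away from $E$, the blow-down $p$ restricts to a diffeomorphism $\widetilde{X}\setminus E \to (M_1\setminus D_1)\times(M_2\setminus D_2)$, where $\pi$ is symplectic, so there is nothing to check. Near $E$, I would expand $\widetilde\pi^n$ by the multinomial theorem. Both decomposable bivectors $A_1=u\partial_u\wedge(Z_1-Z_2)$ and $A_2=v\partial_v\wedge Z_2$ wedge with themselves to zero, while the log symplecticity of each $\pi_i$ guarantees that $Z_i\wedge W_i^{n_i-1}$ is a nonvanishing top section along $D_i$. The dimension constraints $\dim D_i=2n_i-1$ then force the unique surviving term to be
\[
\widetilde\pi^n \;\propto\; uv\,\partial_u\wedge\partial_v\wedge(Z_1\wedge W_1^{n_1-1})\wedge(Z_2\wedge W_2^{n_2-1}),
\]
i.e.\ a nonvanishing $2n$-vector multiplied by $uv$. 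Restricting to $\widetilde{X}\cap\{u\neq 0\}$ shows that $\widetilde\pi^n$ vanishes transversely precisely on $\{v=0\}=E\cap\widetilde{X}$. The symmetric computation in the other chart completes the argument. The principal obstacle is the combinatorial bookkeeping: many cross terms could a priori appear in the multinomial expansion, but the decomposability of the $A_i$ together with the tight dimension constraints on $D_1,D_2$ isolate the one contribution that produces exactly the factor $uv$ needed for transverse vanishing on the exceptional divisor.
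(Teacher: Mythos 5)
Your proof is correct, and it follows the same overall strategy as the paper: both arguments first observe that the $\wedge^2 NL$--component of $\pi_1\oplus\pi_2$ along $L=D_1\times D_2$ vanishes (so Polishchuk's theorem applies and yields the unique lift $\til\pi$), and both then reduce the log symplectic claim to tracking the transverse vanishing of the Pfaffian, which in both arguments rests on the same factorization $\pi^{n_1+n_2}\propto E_1\wedge E_2\wedge\chi_1\wedge\chi_2$ with $\chi_i=Z_i\wedge W_i^{n_i-1}$ the nonvanishing residues. Where you differ is in the execution of this last step: the paper argues invariantly, viewing the Pfaffian as a section of $\Sym^2N^*L\otimes\wedge^2NL\otimes\det TL$ along $L$ and using the identification of the blow-up with the tautological bundle over $\PP(NL)$ together with the Euler sequence to see that the lifted Pfaffian is a fibrewise-linear covolume form which is quadratic along the $\PP^1$--fibres and vanishes exactly over the two sections $\PP(ND_1),\PP(ND_2)$ --- precisely the removed proper transforms; you instead compute in the two affine charts $x_1=uv$, $x_2=v$ (and its mirror), use $x_1\partial_{x_1}=u\partial_u$, $x_2\partial_{x_2}=v\partial_v-u\partial_u$, and extract the factor $uv$ from the multinomial expansion, with the dimension count $W_i^{n_i}=0$ killing all other terms. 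Your chart computation is more elementary and makes the transversality completely explicit (on $\{u\neq 0\}$ the Pfaffian is a unit times $v$), at the cost of a coordinate choice and the symmetric second chart; the paper's bundle-theoretic argument is coordinate-free and identifies the extra vanishing loci intrinsically as $\thickbar{M_1\times D_2}$ and $\thickbar{D_1\times M_2}$, which is also the form in which the statement is reused later. Both are complete proofs; the only point worth making explicit in yours is that the nonvanishing of $Z_i\wedge W_i^{n_i-1}$ along $D_i$ is exactly the nonvanishing of the residue established in Proposition~\ref{rankloc}.
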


\begin{proof}
Let $M = M_1\times M_2$, and note that the transverse Poisson structure $\pi_N$ along $L = D_1\times D_2$ vanishes. By Theorem \ref{poissonblowup}, there is a unique Poisson structure $\til{\pi}$ on $\blowup{M}{L}$ such that $p_*(\til{\pi}) = \pi_1\oplus \pi_2$. We must show that the Pfaffian of $\til{\pi}$ vanishes transversely along $\blowup{M}{L} \cap \PP(NL)$.

By the Linearization theorem~\ref{Thm: GMP}, we may, as in Proposition~\ref{totpois}, take $M_i = \tot(N_i)$ to be the total space of a line bundle $N_i$ over $D_i$, and after choosing connections we obtain 
\[
\pi_i = \sigma_i + Z_i \wedge E_i,
\]
where $Z_i$ and $\sigma_i$ are vector and bivector fields on $D$, respectively, and $E_i$ is the Euler vector field on $\tot(N_i)$, which may be viewed as a section $E_i\in\Gamma(D_i,N^*D_i\otimes ND_i)$.

Then, if $\dim M_i = 2n_i$, the Pfaffian of $\pi = \pi_1\oplus \pi_2$ is 
\begin{equation}\label{factorizz}\begin{aligned}
\pi^{n_1 + n_2} &= c  E_1\wedge E_2\wedge Z_1\wedge\sigma_1^{n_1-1}\wedge Z_2\wedge\sigma_2^{n_2-1}\\
&= c  E_1\wedge E_2\wedge\chi_1\wedge\chi_2,
\end{aligned}\end{equation}
where $c$ is a nonzero constant and $\chi_i\in\Gamma(D_i,\wedge^{n_i}TD_i)$ are the residues from Definition~\ref{residef}.  Hence we see that the Pfaffian may be viewed invariantly as a nonvanishing section
\[
\pi^{n_1+n_2} \in \Gamma(L, \Sym^2 N^*L\otimes \wedge^2 NL \otimes\det TL).
\]
After blowing up, this section defines a covolume form which vanishes linearly on the exceptional divisor, in the following way.  First, the blowup $\blowup{M}{L}$ may be identified with the total space of the tautological line bundle $q:U\to\PP(NL)$.  Covolume forms on $\tot(U)$ which vary linearly on the fibers are sections of 
	\begin{equation}\label{ein}
		q^*U^*\otimes \det (T(\tot(U))) = q^*\det T\PP(NL).
	\end{equation}
Using the blow-down map $p:\PP(NL)\to L$, we write the Euler sequence 
	\begin{equation}\label{zwei}
		\xymatrix{0\ar[r] & \underline{\RR}\ar[r] & U^*\otimes p^* NL\ar[r] & V\ar[r] & 0},
	\end{equation}
defining the relative tangent bundle $V$ for the projection $p$. Also, we have the exact sequence
	\begin{equation}\label{drei}
		\xymatrix{0\ar[r] & V\ar[r] & T\PP(NL)\ar[r] & p^*TL\ar[r] & 0}.
	\end{equation}
Combining~\eqref{ein}, \eqref{zwei}, and \eqref{drei}, we see that fibrewise linear covolume forms on $\tot(U)$ are given by sections of
\[
\det(U^*\otimes p^*NL)\otimes\det p^*TL = (U^*)^2\otimes p^*(\wedge^2 NL\otimes \det TL),
\]
where we have used the fact that $L$ is codimension 2.

Squaring the restriction $p^*N^*\to U^*$, we obtain a natural map $r: p^*\Sym^2 N^*L \to (U^*)^2$, so that the Pfaffian defines a section 
\[
r\otimes(\pi^{n_1+n_2}) \in \Gamma(\PP(NL), (U^*)^2\otimes p^*(\wedge^2 NL\otimes \det TL)).
\]
Therefore, after blow-up, the Pfaffian defines a fibrewise linear covolume form on $\tot(U)$ which varies quadratically along the projective fibres, vanishing (due to the factorization~\eqref{factorizz}) along the fibres over the pair of sections $\PP(N(M_1\times D_2))$, $\PP(N(D_1\times M_2))$ of $\PP(NL)$, which coincide with the loci $\thickbar{M_1 \times D_2}$ and $\thickbar{D_1 \times M_2}$ along the exceptional divisor, as required.
\end{proof}

\begin{prop} \label{bsympblowup}
Let $(M, \pi)$ be a log symplectic manifold with degeneracy locus $D$. If $F \subset D$ is a symplectic leaf, then there is a unique log symplectic Poisson structure $\til{\pi}$ on $\blowup{M}{F}$ such that $p_*(\til{\pi}) = \pi$. Moreover, $\til{\pi}$ is non-degenerate (i.e., symplectic) on $\blowup{M}{F} \backslash \thickbar{D}$.
\end{prop}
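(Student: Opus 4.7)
The plan is to apply Polishchuk's Theorem~\ref{poissonblowup} to produce $\til\pi$ as the unique Poisson lift, and then verify log symplecticity by a local computation in the two standard charts of the blow-up, using the Linearization Theorem~\ref{Thm: GMP} to obtain a concrete normal form for $\pi$ near $F$.

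First I would verify the hypotheses of Theorem~\ref{poissonblowup}. Since $F$ is a symplectic leaf of the corank-one Poisson structure on $D$ (Proposition~\ref{rankloc}), we have $\dim F = 2n-2$, so $\codim F = 2$ in $M$, and $F$ is a Poisson submanifold of $M$ (it is a symplectic leaf of $M$ itself). For the degeneracy of the transverse Poisson structure, I would invoke Theorem~\ref{Thm: GMP} to realize a neighbourhood of $F$ in $M$ as an open set in $\tot(N)$ for the Poisson line bundle $N = ND$. By Proposition~\ref{totpois}, $\pi = \tilde\sigma + \tilde Z\wedge E$, where $Z$ is the transverse Poisson vector field on $D$ and $E$ is the Euler field on $N$. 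Choosing local coordinates $(x_1,\ldots,x_{2n-2},y,t)$ adapted to $F=\{y=t=0\}\subset D=\{t=0\}$, in which $Z = \partial_y$, $\tilde\sigma = \omega^{-1}(x)$, and $E = t\partial_t$, we have
\[
\pi = \omega^{-1}(x) + \partial_y\wedge t\,\partial_t.
\]
The first jet of $\pi$ normal to $F$ then gives the transverse Poisson structure $\pi_N = t\,\partial_y\wedge\partial_t$ on the rank-2 bundle $NF$, which may be rewritten as $\partial_y\wedge (y\,\partial_y+t\,\partial_t) = v\wedge E_{NF}$ with $v=\partial_y$, matching Definition~\ref{degliealg}.

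Having satisfied the hypotheses, Theorem~\ref{poissonblowup} produces the unique Poisson structure $\til\pi$ on $\blowup{M}{F}$ with $p_*\til\pi=\pi$. It remains to show that $\til\pi^n$ vanishes transversely along $\thickbar D$ and is nonvanishing elsewhere. Away from the exceptional divisor $p$ is a diffeomorphism, so this reduces to the log symplecticity of $\pi$ away from $F$. Near the exceptional divisor, starting from
\[
\pi^n = c\,t\cdot \partial_{x_1}\wedge\cdots\wedge\partial_{x_{2n-2}}\wedge\partial_y\wedge\partial_t
\]
for a nonvanishing constant $c$, I would work in the two standard charts of $\blowup{M}{F}$: chart (a), $(y,t)=(u,uv)$, and chart (b), $(y,t)=(uv,u)$. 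In chart (a), the relations $\partial_t = u^{-1}\partial_v$ and $\partial_y = \partial_u - vu^{-1}\partial_v$ give $t\,\partial_y\wedge\partial_t = v\,\partial_u\wedge\partial_v$; thus $\til\pi^n$ is a nonzero multiple of $v$ times the coordinate covolume form, vanishing transversely on $\{v=0\}$, which is exactly the proper transform $\thickbar D$ in this chart. In chart (b) a symmetric computation yields $t\,\partial_y\wedge\partial_t = -\partial_u\wedge\partial_v$, so $\til\pi^n$ is nonvanishing; moreover $p^{-1}(D\setminus F)=\{u=0,\ uv\neq0\}$ is empty in this chart, so $\thickbar D$ does not meet it. Assembling these pieces, $\til\pi^n$ vanishes transversely on $\thickbar D\subset \blowup{M}{F}$ and is nonvanishing elsewhere, proving that $\til\pi$ is log symplectic with degeneracy locus $\thickbar D$ and symplectic on $\blowup{M}{F}\setminus\thickbar D$.

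The main obstacle is the degeneracy check at $F$, since degeneracy of the transverse Poisson structure along $D$ does not a priori imply degeneracy along the deeper stratum $F$; the linearization theorem reduces this to the explicit model $\pi=\tilde\sigma+\tilde Z\wedge E$, and then the identification $\pi_N = v\wedge E_{NF}$ is immediate. After that, the two-chart Pfaffian computation is routine, and one need only take care that $\thickbar D$ meets exactly one of the blow-up charts so that the symplectic and log symplectic chart behaviours fit together consistently with the picture off the exceptional divisor.
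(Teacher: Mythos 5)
Your proposal is correct and shares the paper's overall strategy --- linearize near $F$ via Theorem~\ref{Thm: GMP}, check that the transverse Poisson structure along $F$ is degenerate so that Theorem~\ref{poissonblowup} supplies the unique lift $\til{\pi}$, and then track the Pfaffian through the blow-up --- but the final step is carried out by a genuinely different method. The paper argues invariantly: it regards $\pi^n$ as a nonvanishing section of $N_DF\otimes\det TF$ along $F$, identifies $\blowup{M}{F}$ with the total space of the tautological bundle $U\to\PP(N_MF)$, and exhibits the lifted Pfaffian as $j\otimes\pi^n$ for a bundle map $j:U\to N_MD|_F$ that vanishes transversely exactly on $\PP(N_DF)=\thickbar{D}\cap E$. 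You instead compute in the two affine charts of the blow-up, obtaining $t\,\partial_y\wedge\partial_t = v\,\partial_u\wedge\partial_v$ in one chart and $-\partial_u\wedge\partial_v$ in the other; this is a correct and more elementary verification of the same fact, and your remark that $\thickbar{D}$ meets only the first chart is precisely the consistency check needed to assemble the two charts. The invariant route gives coordinate-independence in one stroke and reuses the template of Proposition~\ref{bbsympblowup}; your route is more transparent but rests on the local normal form $\pi=\omega^{-1}(x)+\partial_y\wedge t\,\partial_t$, which is only available locally (one should say explicitly that this suffices because transversal vanishing of the Pfaffian is a local condition). A small point in your favour: you verify the degeneracy condition $\pi_N=v\wedge E$ along the codimension-two centre $F$ explicitly in coordinates, a step the paper asserts rather tersely.
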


\begin{proof}
As in Proposition~\ref{bbsympblowup}, we may assume, by linearization and upon choosing flat connections, that $M = \tot(N_M F)$ and that $\pi$ has the form 
\[
\pi = \sigma + Z \wedge E,
\]
where $\sigma\in\Gamma(F,\wedge^2TF)$ is a nondegenerate Poisson structure, $E$ is the Euler vector field on $N_MF$, and $Z\in\Gamma(F, N_DF)$ is nonvanishing.  In this case, the transverse Poisson structure is $\pi_N = Z \wedge E$, hence degenerate, so by Theorem \ref{poissonblowup}, there is a unique lift $\til{\pi}$ of the Poisson structure to $\blowup{M}{F}$. We need to show that the Pfaffian of $\til{\pi}$ vanishes only, and transversely, along $\widetilde{X} \cap \PP K$.

If $\dim M = 2n$, then the Pfaffian $\pi^n$ is a nonzero multiple of the section
\begin{equation} \label{eq: pfaffian-linear}
\begin{aligned} 
E \wedge Z \wedge \sigma^{n-1} = E \wedge\chi, 
\end{aligned}
\end{equation}
where $\chi$ is the residue of $N_M D|_F$ as a Poisson line bundle over $D$. This means that the Pfaffian defines a nonvanishing section of 
\[
\pi^n\in \Gamma(F, N_D F \otimes \det(TF)).
\]
To compute the blowup of $\pi^n$, we use the same method as in Proposition~\ref{bbsympblowup}.  We identify the blowup $\blowup{M}{F}$ with the total space of the tautological bundle $q:U\to \PP(N_M F)$, and then we note that the space of fibrewise constant covolumes on $\tot(U)$ is given by 
\[
\Gamma(\PP(N_M F), U^*\otimes q^*(\wedge^2 N_MF\otimes \det TF)).
\]
Composing the inclusion $U\to q^*N_M F$ with the projection in the exact sequence 
\[
\xymatrix{0\ar[r]& N_D F\ar[r] & N_M F\ar[r] & N_M D|_F\ar[r] & 0},
\]
we obtain a map $j:U\to N_MD|_F$ which vanishes precisely along the section $\PP(N_D F)\subset \PP(N_MF)$, and transversely.  So, on the blowup, the Pfaffian is given by 
\[
j\otimes \pi^n,
\]
which is nonzero along the exceptional divisor, except for a transversal zero along $\PP(N_D F)$, which is the intersection of $\thickbar{D}$ with the exceptional divisor.

\end{proof}

%

	\subsection{Blow-up of Lie groupoids} \label{Section: Blow-upGpd}

	In this section, we demonstrate that the projective blow-up of a Lie groupoid $\Gg \rightrightarrows M$ along a subgroupoid
$\Hh \rightrightarrows L$ inherits a Lie groupoid structure, once a certain degeneracy locus is removed.  We restrict our attention to the case that the base $L$ of the subgroupoid has codimension 1.

\subsubsection{Lifting theorem}

To lift the groupoid operations from $\Gg$ to the blow-up, we utilize the lifting criterion for smooth maps given in Proposition~\ref{univblowup2}.  The key point is the lifting of the groupoid multiplication; to apply the criterion here, we need the following description of the normal bundle of a fibre product of smooth maps of pairs.  Recall that if $X\subset Y$ and $L\subset M$ are submanifolds, then $f:(Y,X)\to (M,L)$ is a smooth map of pairs when $f:Y\to M$ is a smooth map such that $f(X)\subset L$.  Such a morphism of pairs induces a morphism of normal bundles 
\[
Nf: NX\to f^*NL,
\] 
defined to be the quotient of $(Tf)|_{X}:TY|_X\to f^* TM$ by $T(f|_X):TX\to f^*TL$.
\begin{lemma}\label{Lemma: MultiplicationSmooth}
	Let $f_1:(Y_1,X_1)\to (M,L)$ and $f_2: (Y_2,X_2)\to (M,L)$ be transverse smooth maps of pairs\footnote{That is, both $f_1:Y_1\to M$, $f_2:Y_2\to M$ and $f_1|_{X_1}:X_1\to L$, $f_2|_{X_2}:X_2\to L$ are transverse.}.  Then the fiber product of submanifolds $X_1\times_L X_2\subset Y_1\times_M Y_2$ has normal bundle given by 
	\[
		N(X_1 \times_L X_2) = NX_1 \times_{NL} NX_2.
	\]
\end{lemma}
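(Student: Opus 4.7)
The plan is to identify $N(X_1\times_L X_2)$ with $NX_1\times_{NL}NX_2$ by constructing a canonical bundle epimorphism from $T(Y_1\times_M Y_2)|_{X_1\times_L X_2}$ to $NX_1\times_{NL}NX_2$ whose kernel is exactly $T(X_1\times_L X_2)$. First, since $f_1,f_2$ are transverse, $Y_1\times_M Y_2$ is smooth with tangent spaces computed as the fibre product $T_{y_1}Y_1\times_{T_{f_1(y_1)}M}T_{y_2}Y_2$; likewise, the transversality of the restrictions $f_i|_{X_i}:X_i\to L$ makes $X_1\times_L X_2$ a smooth submanifold with $T(X_1\times_L X_2)=TX_1\times_{TL}TX_2$. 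These fit into a short exact sequence
\[
0\longrightarrow T(X_1\times_L X_2)\longrightarrow T(Y_1\times_M Y_2)|_{X_1\times_L X_2}\longrightarrow N(X_1\times_L X_2)\longrightarrow 0.
\]

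Next, I would define the natural map
\[
\Phi:T(Y_1\times_M Y_2)|_{X_1\times_L X_2}\longrightarrow NX_1\times_{NL}NX_2,\qquad (v_1,v_2)\longmapsto ([v_1],[v_2]).
\]
This lands in the fibre product because $Tf_1(v_1)=Tf_2(v_2)$ in $TM|_L$ implies the two classes agree in $NL$. The kernel consists of pairs with $v_i\in TX_i$ and $T(f_1|_{X_1})(v_1)=T(f_2|_{X_2})(v_2)$ in $TL$, which is exactly $T(X_1\times_L X_2)$ by the tangent computation above. Hence $\Phi$ descends to an injection $N(X_1\times_L X_2)\hookrightarrow NX_1\times_{NL}NX_2$.

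The main obstacle is surjectivity, and it is precisely here that transversality of the \emph{restricted} maps $f_i|_{X_i}$ is needed. Given a compatible pair $([w_1],[w_2])$, choose any lifts $w_i\in TY_i|_{X_i}$; then $u:=Tf_1(w_1)-Tf_2(w_2)$ represents $0$ in $NL$, hence lies in $TL$. Transversality of $f_1|_{X_1}$ and $f_2|_{X_2}$ as maps to $L$ yields a decomposition $u=T(f_1|_{X_1})(a_1)-T(f_2|_{X_2})(a_2)$ with $a_i\in TX_i$, and then $(w_1-a_1,w_2-a_2)$ lies in $TY_1\times_{TM}TY_2$ and projects to $([w_1],[w_2])$. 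This completes the identification and also clarifies why both transversality hypotheses appear in the statement.
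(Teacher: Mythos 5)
Your proof is correct and complete; the paper itself states this lemma without proof, and your argument is exactly the standard one the authors evidently had in mind: identify the tangent spaces of both fibre products via transversality, exhibit the natural surjection $\Phi$ onto $NX_1\times_{NL}NX_2$ with kernel $T(X_1\times_L X_2)$, and use transversality of the restrictions $f_i|_{X_i}$ to prove surjectivity. The surjectivity step is the only place where anything could go wrong, and your lifting argument handles it correctly; as a bonus it shows $NX_1\times_{NL}NX_2$ has constant rank, so the right-hand side really is a vector bundle.
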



\begin{theorem} \label{liegpdblowup} Let $\Hh \rightrightarrows L$ be a closed Lie subgroupoid of $\Gg \rightrightarrows M$ over the closed hypersurface $L$, and define
    \begin{equation}\label{defngpdblowup}
    \BGpd{\Gg}{\Hh} \coloneqq \blowup{\Gg}{\Hh} \backslash
    (\thickbar{s^{-1}(L)} \cup \thickbar{t^{-1}(L)}),
    \end{equation}
    where $s$ and $t$ are the source and target maps of $\Gg$.
    There is a unique Lie groupoid structure
    $\BGpd{\Gg}{\Hh} \rightrightarrows M$ such that the blow-down map restricts to a base-preserving Lie groupoid morphism 
    \[
    p: \BGpd{\Gg}{\Hh} \rightarrow \Gg.
    \]
    The exceptional locus $\til{\Hh} = \BGpd{\Gg}{\Hh}\cap p^{-1}(\Hh)$ is then a Lie subgroupoid of codimension 1 in $\BGpd{\Gg}{\Hh}$, along which $p$ restricts to a morphism $\til{\Hh}\to \Hh$ of Lie groupoids over $L$.
\end{theorem}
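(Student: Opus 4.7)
The plan is to use the universal property of the blow-up (Proposition~\ref{univblowup2}) to lift each structure map of $\Gg$ to $\blowup{\Gg}{\Hh}$, check that each lift restricts to $\BGpd{\Gg}{\Hh}$, and invoke uniqueness of lifts for the groupoid axioms. Since $\Hh\subset s^{-1}(L)\cap t^{-1}(L)$ has codimension at least two in $\Gg$, we are in the setting of Proposition~\ref{univblowup2}. First I would set $\tilde s = s\circ p$ and $\tilde t = t\circ p$; a direct coordinate check in a blow-up chart shows that $\tilde s$ fails to be a submersion at $(h,[\ell])\in E \coloneqq \PP(N\Hh)$ exactly when $\ell$ lies in $\ker(Ns\colon N\Hh\to s^*NL)$, whose projectivisation is $\thickbar{s^{-1}(L)}\cap E$, so removing the proper transforms of $s^{-1}(L)$ and $t^{-1}(L)$ excises precisely the bad locus for both $\tilde s$ and $\tilde t$. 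For the inverse $i$, Proposition~\ref{univblowup2} applied to $i\circ p$ yields a unique lift (with $(i\circ p)^{-1}(\Hh) = E$ and $Ni\circ Np$ injective by Remark~\ref{bldninj}); since $i$ swaps $s^{-1}(L)$ with $t^{-1}(L)$, the lift descends to $\BGpd{\Gg}{\Hh}$. The identity $\id\colon M\to\Gg$ lifts analogously, with $\id^{-1}(\Hh) = L$ and $N\id$ injective via the splitting $T\Gg|_{\id(M)} = TM\oplus\ker Ts$; the image lies in $\BGpd{\Gg}{\Hh}$ because $N\id(NL)$ projects isomorphically to $NL$ under both $Ns$ and $Nt$.

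The hard part will be lifting multiplication. Form the smooth fibre product $\BGpd{\Gg}{\Hh}^{(2)} = \BGpd{\Gg}{\Hh}{}_{\tilde t}\times_{\tilde s}\BGpd{\Gg}{\Hh}$ and the composite $F = m\circ(p\times p)\colon \BGpd{\Gg}{\Hh}^{(2)}\to\Gg$. Any point of $F^{-1}(\Hh)$ has $p(\tilde g_1)\in s^{-1}(L)$ and $p(\tilde g_2)\in t^{-1}(L)$, which after removing the proper transforms forces both $\tilde g_i$ into $\tilde\Hh\coloneqq \BGpd{\Gg}{\Hh}\cap E$. Thus $F^{-1}(\Hh) = \tilde\Hh{}_{\tilde t}\times_{\tilde s}\tilde\Hh$, which Lemma~\ref{Lemma: MultiplicationSmooth} shows is a hypersurface with rank-one normal bundle $N\tilde\Hh\times_{NL}N\tilde\Hh$ (one-dimensional because $N\tilde s$ and $N\tilde t$ are isomorphisms on $N\tilde\Hh$). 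To apply Proposition~\ref{univblowup2} I must show $NF = Nm\circ N(p\times p)$ is injective: using the local trivialisation formula $Nm(v_1,v_2) = v_1 h_2 + h_1 v_2$, the image of a nonzero element is $Np(v_1)h_2 + h_1 Np(v_2)\in N_{h_1 h_2}\Hh$, and I would verify nonvanishing by a direct computation showing that the image of $N(p\times p)$ lies outside $\ker Nm$ whenever the projective directions $\ell_1,\ell_2$ avoid the excluded subbundles. The same computation identifies the resulting projective direction at $h_1 h_2$, confirming that $\tilde m$ lands in $\BGpd{\Gg}{\Hh}$.

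Once the lifts $\tilde s, \tilde t, \tilde m, \tilde\imath, \tilde\id$ are in place, associativity, unit, and inverse axioms follow from uniqueness in Proposition~\ref{univblowup2}: both sides of each axiom blow down to the same map into $\Gg$, so the lifts must coincide. The exceptional locus $\tilde\Hh$ is a smooth hypersurface preserved by every structure map, hence a codimension-one Lie subgroupoid over $L$, and $p|_{\tilde\Hh}$ agrees with the restriction of the projective bundle $\PP(N\Hh)\to\Hh$ to $\BGpd{\Gg}{\Hh}\cap E$. The principal obstacle throughout is the injectivity of $NF$, which requires genuine understanding of how the projective directions encoded by $E$ interact with groupoid multiplication; every other step is a formal consequence of the universal property together with the normal-bundle computations in Lemma~\ref{Lemma: MultiplicationSmooth} and Remark~\ref{bldninj}.
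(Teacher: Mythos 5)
Your overall architecture coincides with the paper's: set $\tilde s = s\circ p$, $\tilde t = t\circ p$, lift $m$, $\id$, and $i$ through the universal property of Proposition~\ref{univblowup2}, check that each image avoids the deleted proper transforms, and deduce the groupoid axioms from uniqueness/density. Your treatment of the source and target maps, the inverse, and the identity section matches the paper's Steps 1, 3 and 4 in substance, and your identification $F^{-1}(\Hh)=\til{\Hh}\times_L\til{\Hh}$ together with the use of Lemma~\ref{Lemma: MultiplicationSmooth} is correct.

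The gap sits exactly where you locate ``the principal obstacle'': the injectivity of $NF$ for $F=m\circ(p\times p)$. You propose to verify it ``by a direct computation'' from the formula $Nm(v_1,v_2)=v_1h_2+h_1v_2$, but the computation is never carried out, and it is the entire content of the hardest step. The formula is only heuristic as stated: it presupposes identifications of the normal spaces $N_{h_1}\Hh$, $N_{h_2}\Hh$ with $N_{h_1h_2}\Hh$ via translations, and the assertion that the resulting element is nonzero ``whenever the projective directions $\ell_1,\ell_2$ avoid the excluded subbundles'' is precisely the statement to be proved, not a routine check. The paper sidesteps any analysis of $Nm$ with one observation you miss: since the source of a product is the source of its first factor, $s\circ F=\tilde s\circ\tilde p_1$, so $N(s\circ F)=N\tilde s\circ N\tilde p_1$ is a composition of surjections of rank-one bundles (using Step 1 and Lemma~\ref{Lemma: MultiplicationSmooth}), hence an isomorphism; injectivity of $NF$ follows at once, and the same identity gives $NF(N\til{\Hh}^{(2)})\cap K_s=0$ (where $K_s=\ker(Ns\colon N\Hh\to NL)$), with the symmetric argument for $K_t$, so that $\tilde m$ lands in $\BGpd{\Gg}{\Hh}$. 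Until you either carry out your direct computation in full or adopt such a reduction, the proof is incomplete at its crucial point.
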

\begin{proof} 
    To obtain the result, we lift the groupoid structure on $\Gg$ to maps on $\widetilde{\Gg}=\BGpd{\Gg}{\Hh}$ and verify that the groupoid axioms are satisfied.  First, if the blow-down $p: \widetilde{\Gg} \rightarrow \Gg$ is to be a base-preserving Lie groupoid morphism, the source and target maps of $\widetilde{\Gg}$ must be given by 
    \[
    \widetilde{s} = s \circ p,\qquad \widetilde{t} = t \circ p.
    \]
    In the following steps, we show that $\widetilde{s}, \widetilde{t}$ are submersions, and then obtain the remaining lifts: the multiplication
    $m: \Gg^{(2)}\rightarrow \Gg$ lifts uniquely to
    $\widetilde{m}: \widetilde{\Gg}\times_{M}\widetilde{\Gg} \rightarrow \widetilde{\Gg}$; the
    identity $\id: \Gg \rightarrow \Gg$ lifts uniquely to
    $\widetilde{\id}: M \rightarrow \widetilde{\Gg}$; and the inverse map
    $i: \Gg \rightarrow \Gg$ lifts uniquely to
    $\widetilde{i}: \widetilde{\Gg} \rightarrow \widetilde{\Gg}$.

  Once the lifts are defined, we see that they satisfy the groupoid conditions on an open dense set (the complement of $\til{\Hh}$); by continuity, the groupoid axioms hold on all of $\widetilde{\Gg}$ and $p: \widetilde{\Gg} \rightarrow \Gg$ is a Lie groupoid morphism, completing the proof.

    \vspace{1ex}
    \noindent \textbf{Step 1:} {\it The maps $\widetilde{s}, \widetilde{t}$ are submersions of pairs $(\widetilde{\Gg},\widetilde{\Hh})\to (M, L)$.}
	\vspace{1ex}
	
    The blow-down $p: \widetilde{\Gg} \rightarrow \Gg$ is a local
    diffeomorphism away from $\widetilde{\Hh}$, so it suffices to show that
    $T\widetilde{s}: (T\widetilde{\Gg}|_{\widetilde{\Hh}}, T\widetilde{\Hh}) \rightarrow (TM|_L, TL)$ 
    is pairwise surjective. Since $Tp: T\widetilde{\Hh} \rightarrow T\Hh$ and $Ts:T\Hh\to TL$ are 
    surjective, we immediately obtain that 
    $T\widetilde{s}|_{\widetilde{\Hh}}: T\widetilde{\Hh} \rightarrow TL$  is surjective.
    \begin{equation} \label{Diagram: Source-NormalSurj}
        \begin{aligned}
            \xymatrix{
                0 \ar[r]	& T\widetilde{\Hh} \ar[d]^-{T\widetilde{s}|_{\widetilde{\Hh}}} \ar[r]	& T\widetilde{\Gg}|_\Hh \ar[d]^-{T\widetilde{s}} \ar[r]	& N\widetilde{\Hh} \ar[d]^-{N\widetilde{s}} \ar[r]	& 0 \\
                0 \ar[r] & TL \ar[r] & TM \ar[r] & NL \ar[r] & 0 }
        \end{aligned}
    \end{equation}
    By the commutative diagram~\eqref{Diagram: Source-NormalSurj}, it remains to show that the normal map
    $N\widetilde{s}: N\widetilde{\Hh} \rightarrow NL$ induced by $T\widetilde{s}$ is surjective.
%
    Now, since $s:(\Gg,\Hh)\to (M,L)$ is a submersion, the induced map $Ns:N\Hh\to NL$ is surjective, 
    with kernel subbundle $K_s\subset N\Hh$, and the composition $N\widetilde{s} = Ns\circ Np$ therefore fails to be surjective 
    along the subset $\PP(K_s)\subset\PP(N\Hh)=p^{-1}(\Hh)$ of the exceptional divisor.  But $\PP(K_s) = p^{-1}(\Hh)\cap \thickbar{s^{-1} L}$ has been removed in the definition~\eqref{defngpdblowup}, so that $N\widetilde{s}$ is surjective along $\Hh$.  The same argument applies to the target map $t$, yielding the result.

    \vspace{1ex}
    \noindent \textbf{Step 2:} {\it Lifting the multiplication map.}
    \vspace{1ex}
    
	To lift the multiplication $m: \Gg^{(2)} \rightarrow \Gg$ to a map $\widetilde m: \widetilde{\Gg}^{(2)} = \widetilde{\Gg}\times_{M}\widetilde{\Gg}\to \widetilde{\Gg}$, we apply the universal property of blow-up to the map $f = m\circ(p\times p)$ in order to complete the following commutative diagram.
    \begin{equation*}
    \begin{aligned}\xymatrix{
            \widetilde{\Gg}^{(2)} \ar[d]_-{p \times p} \ar@{.>}[r]^-{\widetilde{m}} \ar[dr]_-{f}		&	\blowup{\Gg}{\Hh} \ar[d]^-{p}	\\
            \Gg^{(2)} \ar[r]_-{m} & \Gg }
    \end{aligned}
    \end{equation*}
	By Proposition \ref{univblowup2}, it suffices to show that $f^{-1}(\Hh)$ is a
    hypersurface in $\widetilde{\Gg}^{(2)}$ whose normal bundle injects into $N\Hh$ via $Nf$.
	First, note that $f^{-1}(\Hh) = \widetilde{\Hh}^{(2)} \coloneqq \widetilde{\Hh}\times_L \widetilde{\Hh}$, 
	which is a hypersurface (smooth and codimension 1) by transversality.  
	Then, to show that $Nf: N\widetilde{\Hh}^{(2)} \rightarrow N\Hh$ is
    injective, we show the stronger result that $s\circ f$ induces an isomorphism $N\widetilde{\Hh}^{(2)}\to NL$. Observe that 
    \[
    s\circ f = s\circ m\circ(p\times p) = s\circ p_1\circ (p\times p)=\til{s}\circ\til{p_1},
    \]
    where $p_1$ and $\til{p_1}$ are the first projections ${\Gg}\times_M{\Gg}\to {\Gg}$ 
    and $\til{\Gg}\times_M\til{\Gg}\to \til{\Gg}$, respectively.  
  	The induced map on normal bundles is then the following composition
	\[
	\xymatrix{N(\til{\Hh}\times_L\til{\Hh}) = N\til{\Hh}\times_{NL}N\til{\Hh}\ar[r]^-{N\til{p_1}} &N\til{\Hh}\ar[r]^-{N\til{s}} &NL },	
	\]  
    where we have used Lemma~\ref{Lemma: MultiplicationSmooth} to compute $N\til{\Hh}^{(2)}$.  The composition is an isomorphism since $N\til{s}$ is surjective by Step 1 and $N\til{p_1}$ is surjective between bundles of rank 1.
    
    Since $N(s\circ f)$ is an isomorphism, it follows that $Nf(N\widetilde{\Hh}^{(2)}) \cap K_s = 0$ and $\til{m}(\widetilde{\Gg}^{(2)}) \cap \PP (K_s) = \varnothing$. Similarly $\til{m}(\widetilde{\Gg}^{(2)}) \cap P(K_t) = \varnothing$ and we obtain $\til{m}(\widetilde{\Gg}^{(2)}) \subset \til{\Gg}$.

	\vspace{1ex}
	\noindent \textbf{Step 3:} {\it Lifting the identity map.}
	\vspace{1ex}

	The identity $\id: (M,L) \rightarrow (\Gg,\Hh)$ is an embedding of pairs such that $\Hh\cap \id(M) = \id(L)$.  This implies that $T\Hh\cap T(\id(M)) = T(\id(L))$, so that the induced map on normal bundles $N(\id):NL\to N\Hh$ is injective.  Since $\id^{-1}(\Hh) = L$ is a hypersurface by assumption, Proposition~\ref{univblowup2} yields the unique lift $\til{\id}: M \rightarrow \blowup{G}{H}$.
	
	Since $T(\id(M))$ is complementary to $T(s^{-1}(x))$ at $\id(x) \in \id(M) \subset \Gg$, it follows that $N(\id)(NL) \cap K_s = 0$ and $\til{\id}(M) \cap \PP (K_s) = \varnothing$. Similarly, $\til{\id}(M) \cap \PP (K_t) = \varnothing$, and it follows that $\til{\id}(M) \subset \til{\Gg}$.
	
	\vspace{1ex}
    \noindent \textbf{Step 4:} {\it Existence of inverses.}
    \vspace{1ex}

	To obtain an inverse map $\til{i}$ for $\til{\Gg}$, we lift $i\circ p$, completing the commutative diagram
	\[
    \xymatrix{
        \blowup{\Gg}{\Hh} \ar[d]_-{p} \ar@{.>}[r]^-{\widetilde{i}}		&	\blowup{\Gg}{\Hh} \ar[d]^-{p}	\\
        \Gg \ar[r]_-{i} & \Gg }
    \]
	First, note that $(i\circ p)^{-1}(\Hh)$ is the exceptional divisor of $\blowup{\Gg}{\Hh}$, so it remains to show that $N(i\circ p) = Ni\circ Np$ is injective.  Since the inverse map $i:(\Gg,\Hh)\to (\Gg,\Hh)$ is a diffeomorphism of pairs, $Ni$ is an isomorphism.  For any blow-down, $Np$ is injective along the exceptional divisor, as in Remark~\ref{bldninj}. By Proposition \ref{univblowup2}, we obtain the unique lift $\til{i}: \blowup{\Gg}{\Hh} \rightarrow\blowup{\Gg}{\Hh}$. since the inversion on $G$ exchanges $s^{-1}(L)$ and $t^{-1}(L)$, it follows that $\til{i}$ exchanges $\thickbar{s^{-1}(L)}$ and $\thickbar{t^{-1}(L)}$, so that $\til{i}(\til{G}) \subset \til{G}$, as required.
\end{proof}

\subsubsection{Elementary modification of Lie algebroids}

The blow-up operation for Lie groupoids given in Theorem~\ref{liegpdblowup} 
corresponds to an operation on Lie algebroids, which we now describe.  We begin by simply applying the Lie functor as in~\S\ref{liefunctor} to the blow-up groupoid 
$\BGpd{\Gg}{\Hh}$.

\begin{corollary} \label{c: blow-up agd} Let $\Gg \rightrightarrows M$ be a Lie
    groupoid, and $\Hh \rightrightarrows L$ a closed Lie subgroupoid over the closed hypersurface $L$, so that $\Lie(\Hh)$ is a Lie subalgebroid\footnote{The 
    standard notion of Lie subalgebroid is described in~\cite[Definition 4.3.14]{MR2157566}.}
    of $\Lie(\Gg)$.  
    Then $\Lie(\BGpd{\Gg}{\Hh})$ has sheaf of sections 
	defined by
    \begin{equation}\label{lieofab}
	\Lie(\BGpd{\Gg}{\Hh}) = \{X \in \Lie(\Gg) ~|~  X|_L \in \Lie(\Hh) \}.
    \end{equation}
\end{corollary}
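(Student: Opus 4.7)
The plan is to apply the Lie functor of Section~\ref{liefunctor} to the base-preserving Lie groupoid morphism $p:\BGpd{\Gg}{\Hh}\to\Gg$ produced by Theorem~\ref{liegpdblowup}, obtaining a Lie algebroid morphism
\[
\Lie(p):\Lie(\BGpd{\Gg}{\Hh})\to\Lie(\Gg)
\]
covering $\id_M$. Since $p$ restricts to a diffeomorphism on $\BGpd{\Gg}{\Hh}\setminus\til{\Hh}$, this morphism is an isomorphism over $M\setminus L$. The task then reduces to showing that $\Lie(p)$ identifies the sheaf of sections of $\Lie(\BGpd{\Gg}{\Hh})$ with the subsheaf $\mathcal{F}=\{X\in\Lie(\Gg):X|_L\in\Lie(\Hh)\}$ of $\Lie(\Gg)$.

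First I would verify that the image of $\Lie(p)$ lies in $\mathcal{F}$. Recall from Step~3 of the proof of Theorem~\ref{liegpdblowup} that $\til{\id}(L)\subset\til{\Hh}$, so both $\Lie(\til{\Hh})$ and $\Lie(\BGpd{\Gg}{\Hh})|_L$ are subbundles of $T\BGpd{\Gg}{\Hh}|_{\til{\id}(L)}$ along a common identity section. A dimension count,
\[
\rk\Lie(\til{\Hh}) = \dim\til{\Hh}-\dim L = \dim\BGpd{\Gg}{\Hh}-\dim M = \rk\Lie(\BGpd{\Gg}{\Hh}),
\]
combined with the inclusion $\Lie(\til{\Hh})=T\til{\Hh}\cap\ker T\til{s}\subseteq \ker T\til{s}=\Lie(\BGpd{\Gg}{\Hh})|_L$, forces the equality $\Lie(\til{\Hh})=\Lie(\BGpd{\Gg}{\Hh})|_L$ as vector bundles over $L$. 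Consequently, $\Lie(p)|_L$ coincides with $\Lie(p|_{\til{\Hh}}):\Lie(\til{\Hh})\to\Lie(\Hh)$ and takes values in $\Lie(\Hh)\subset\Lie(\Gg)|_L$, so that $\Lie(p)$ is valued in $\mathcal{F}$.

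Sheaf injectivity of $\Lie(p):\Lie(\BGpd{\Gg}{\Hh})\to\mathcal{F}$ is automatic since $\Lie(p)$ is a fibrewise isomorphism on the dense open set $M\setminus L$, so any section in its kernel vanishes outside $L$ and hence identically. For surjectivity onto $\mathcal{F}$, I would argue locally near a point $x_0\in L$: choose a local frame $e_1,\ldots,e_n$ of $\Lie(\Gg)$ such that $e_1,\ldots,e_k$ restrict to a frame of $\Lie(\Hh)$ along $L$, so that $\mathcal{F}$ is locally generated by $e_1,\ldots,e_k,\phi e_{k+1},\ldots,\phi e_n$, for $\phi$ a local defining function of $L$. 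To lift each generator, I would use the correspondence between sections of $\Lie(\Gg)$ and right-invariant vector fields on $\Gg$ tangent to source fibres: for $i\le k$, the vector field associated to $e_i$ is tangent to $\Hh$ along $\Hh$ (by right-invariance within the subgroupoid), hence lifts uniquely to $\blowup{\Gg}{\Hh}$ by the standard lifting property for vector fields tangent to a blow-up centre, and restriction to $\BGpd{\Gg}{\Hh}$ yields the required section. For $j>k$, the vector field $(\phi\circ t)\vec{e}_j$ vanishes on $t^{-1}(L)\supset\Hh$ and lifts analogously. Right-invariance of the lifts is inherited from uniqueness combined with the multiplicative structure established in Theorem~\ref{liegpdblowup}.

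The principal obstacle is this surjectivity step, and specifically the verification that the blow-up lifts of right-invariant vector fields remain right-invariant on $\BGpd{\Gg}{\Hh}$; once this is handled, $\Lie(p)$ provides the desired identification of sheaves.
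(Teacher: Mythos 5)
Your argument is correct, and its engine is the same as the paper's: sections of the elementary modification are realized as invariant vector fields on $\Gg$ that are tangent to $\Hh$ (or vanish along it), and these are lifted to the blow-up via the universal property, with invariance and tangency to the source fibres of the lift recovered by continuity from the dense open set where $p$ is a diffeomorphism. The genuine divergence is in the converse direction, i.e.\ in showing that these lifts exhaust $\Lie(\BGpd{\Gg}{\Hh})$. The paper computes the determinant of the lifted local frame along the exceptional divisor and checks that its zero locus is exactly the intersection with $\thickbar{s^{-1}(L)}$, which has been deleted in the definition of $\BGpd{\Gg}{\Hh}$; this makes the necessity of that deletion explicit. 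You instead note that $\Lie(p)$ is injective on sections (density of $M\setminus L$) and lands in $\mathcal{F}$ (via your rank count identifying $\Lie(\til{\Hh})$ with $\Lie(\BGpd{\Gg}{\Hh})|_L$, which is correct since both are rank-$(\dim\Gg-\dim M)$ subbundles of $\ker T\til{s}$ along $\til{\id}(L)$), so that an injective $C^\infty$-linear sheaf map whose image contains a generating set of the module $\mathcal{F}$ and is contained in $\mathcal{F}$ must be a bijection onto $\mathcal{F}$. This module-theoretic closing step is valid and arguably cleaner; the price is that the role of removing $\thickbar{s^{-1}(L)}\cup\thickbar{t^{-1}(L)}$ is then buried in the hypotheses of Theorem~\ref{liegpdblowup} (that $\BGpd{\Gg}{\Hh}$ is a Lie groupoid with $\til{\Hh}$ a Lie subgroupoid) rather than displayed by an explicit degeneration of the frame. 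The remaining differences are conventions only: the paper works with left-invariant fields and extends $fX$ using $s^*f$ where you use right-invariant fields and $t^*\phi$, and it lifts flows rather than vector fields, which amounts to the same application of the lifting criterion.
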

\begin{proof}
    For any Lie groupoid, we may view the sections of its Lie algebroid as
    left-invariant vector fields (always taken to be tangent to the source fibres).  
    Therefore, it suffices to show that the blow-down map $p:\BGpd{\Gg}{\Hh}\to \Gg$ induces
    a bijection between the right hand side of~\eqref{lieofab}, viewed as left invariant vector fields on $\Gg$ tangent to $\Hh$, and the left hand side of~\eqref{lieofab}, viewed as left invariant vector fields on $\BGpd{\Gg}{\Hh}$.  
    
    Let $X$ be a left-invariant vector field on $\Gg$ tangent to $\Hh$, 
    and let $\phi: I \times \Gg \rightarrow \Gg$ 
    be its flow, defined on a sufficiently small neighbourhood 
    $I\subset\RR$ of zero.  We show that $\phi$ lifts to a flow on $\BGpd{\Gg}{\Hh}$ by 
    first lifting the map to the blow-up $\blowup{I\times\Gg}{I\times\Hh}$, which completes 
    the commutative diagram (here $p, p'$ are the blow-down maps)
    \begin{equation} \label{Diagram: GpdLift} 
    \begin{aligned}
    \xymatrix{
	\blowup{I \times\Gg}{I \times\Hh} \ar[d]_-{p'} \ar@{.>}[r]^-{\widetilde{\phi}}& \BGpd{\Gg}{\Hh} \ar[d]_-{p}	\\
                I \times\Gg \ar[r]_-{\phi} &
            \Gg } 
    \end{aligned}\end{equation}
    and then noting that 
    $\blowup{I \times\Gg}{I \times\Hh} = I \times \blowup{\Gg}{\Hh}$, so that 
    $\til{\phi}$ is indeed a flow on $\BGpd{\Gg}{\Hh}$.  
    Then $\til{X} = d\til{\phi}/dt|_{t=0}$ is the required lift 
    of $X$ to a left invariant vector field on $\BGpd{\Gg}{\Hh}$.
    The lift is obtained via Proposition~\ref{univblowup2}, as follows: since $X$ is tangent to $\Hh$, we have
    $\phi^{-1}(\Hh) = I \times \Hh$, and so $(\phi\circ p')^{-1}(\Hh)$ is the
    exceptional divisor in $\blowup{I \times\Gg}{I \times\Hh}$, a hypersurface. 
    Furthermore, $N(\phi\circ p')$ is the composition of $Np'$, injective by Remark~\ref{bldninj}, with $N\phi$, an isomorphism, so is itself injective, proving existence and uniqueness of $\til{\phi}$.
    
    Conversely, we show that $\Lie(\BGpd{\Gg}{\Hh})$ is generated by 
    the lifts of left-invariant vector fields obtained above.      
    For a sufficiently small neighbourhood $U\subset M$ of $p\in L$, 
    choose a basis of sections of $\Lie(\Hh)$ over $U\cap L$ and 
    extend them to linearly independent sections
    $(X_1,\ldots, X_l)$ of $\Lie(\Gg)$ over $U$.  
    Extend this to a basis $(X_1, \ldots, X_l, X_{l+1},\ldots, X_n)$ of $\Lie(\Gg)$ over $U$.
    Then, if $f\in C^\infty(U,\RR)$ is a generator for the ideal sheaf of $L$ in $U$,
    we see that 
    \begin{equation}\label{modbasis}
    (\til{f}X_1,\ldots, \til{f}X_l, X_{l+1},\ldots, X_n),
    \end{equation}
    for $\til{f}=s^*f$ the pullback of $f$ by the source map of $\Gg$, 
    forms a $C^\infty$--basis for the right hand side of~\eqref{lieofab}, 
    showing, incidentally, that it defines a locally free sheaf.  
	
	Along the exceptional divisor $E=\PP (N\Hh)$ of the blowup $\Bl{\Gg}{\Hh}$, the lifts 
	of the vector fields~\eqref{modbasis} have determinant given by 
	\[
	(p^*d\til{f}|_\Hh)^l\otimes X_1\wedge\cdots \wedge X_n,
	\]
	This defines a section of $\det T(\Bl{\Gg}{\Hh})|_E$ which vanishes to order $l$ along the 
	bundle of hyperplanes $(p^*d\til{f}|_{\Hh})^{-1}(0)\subset E$.  But this is precisely the intersection
	$E\cap \thickbar{s^{-1}(L)}$, which is removed in $\BGpd{\Gg}{\Hh} = \Bl{\Gg}{\Hh} \backslash (\thickbar{s^{-1}(L)} \cup \thickbar{t^{-1}(L)})$.  Hence the lifts of the vector fields~\eqref{modbasis} generate $\Lie(\BGpd{\Gg}{\Hh})$, as required.

\end{proof}

\begin{definition}\label{blpalg}
    Let $A\to M$ be a Lie algebroid, and $B\to L$ a Lie subalgebroid over the closed hypersurface $L\subset M$.  We define the \defw{elementary modification $\BAlg{A}{B}$ of $A$ along $B$} to be the Lie algebroid with sheaf of sections given by 
    \begin{equation}\label{algblwp}
    \BAlg{A}{B}(U) = \{X \in \Gamma(U,A) ~|~ X|_L \in \Gamma(U\cap L,B)\},
    \end{equation}
    for open sets $U\subset M$.  
\end{definition}
In view of this definition, Corollary~\ref{c: blow-up agd} may be rephrased to state that there is a canonical isomorphism
\[
\Lie(\BGpd{\Gg}{\Hh}) \cong \BAlg{\Lie(\Gg)}{\Lie(\Hh)}
\]
of Lie algebroids, whenever $\Hh\subset \Gg$ is a subgroupoid over a closed hypersurface.

\begin{example}
    Let $\Gg_0 \rightrightarrows \RR^2$ be the pair groupoid of
    $\RR^2$, and choose coordinates $(x,y)$. Let $L = \{x=0\}$.
    Then we obtain a subgroupoid $\Hh_0 = L\times L\subset \Gg_0$.  The Lie algebroid associated to the blow-up $\Gg_1 = \BGpd{\Gg_0}{\Hh_0}$ is
    \[
    \Lie(\Gg_1) = \left<x\del{x},~\del{y}\right>,
    \]
	which we recognize as the log tangent algebroid $T_L(\RR^2) = \BAlg{T(\RR^2)}{TL}$.

    Suppose we now blow up $\Gg_1$ along the codimension 2 subgroupoid
    $\Hh_1 = p_1^{-1}(\id_0(L))$, where $p_1:\Gg_1\to \Gg_0$ is the blow-down map.  The Lie algebroid corresponding to $\Gg_2 = \BGpd{\Gg_1}{\Hh_1}$ is then 
    \[
    \Lie(\Gg_2) = \left<x\del{x},~x\del{y}\right>,
    \]
	which is nothing but the Poisson algebroid $T^*_\pi\RR^2$, for $\pi = x\del{x}\wedge\del{y}$.

    On the other hand, we also have a codimension $3$ subgroupoid
    $\Hh'_1 = \id_1(L)$ of $\Gg_1$. The Lie algebroid corresponding to $\Gg'_2 = \BGpd{\Gg_1}{\Hh'_1}$ is then 
    \[
    \Lie(\Gg'_2) = \left<x^2\del{x},~x\del{y}\right>.
    \]
\end{example}

	\subsection{Blow-up of Poisson groupoids} \label{Construction}

	In this section, we construct a symplectic groupoid integrating a proper log
symplectic manifold $(M,\pi)$, by successively applying the blow-up operation to the pair groupoid $\Pair(M)$.  The fact that the blowup operations preserve both the Poisson and the groupoid structure is due, as we shall see, to the combined results of 
\S\ref{Section: Blow-upPoisson} and \S\ref{Section: Blow-upGpd}.
 
\subsubsection{The log pair groupoid}\label{logintegr}
Let $M$ be a manifold and $D\subset M$ a closed hypersurface with connected components $\{D_j\}_{j\in\sD}$. 
The source-connected subgroupoid of the pair groupoid of $D$ is then given by 
	\begin{equation}\label{diagsquare}
		\Pair^c(D) = \coprod_{j\in\sD} (D_j\times D_j).
	\end{equation}
\begin{definition} \label{Def: LogPairGpd} 
	Let $D\subset M$ be a closed hypersurface.  The \defw{log pair groupoid} $\Pair_D(M)$ of $(M,D)$ is defined by
    \[
    \Pair_D(M) = \BGpd{\Pair(M)}{\Pair^c(D)}^c;
    \]
    that is, it is the source-connected subgroupoid of the blow-up of $\Pair(M)$ along $\Pair^c(D)$. 
\end{definition}

By Corollary~\ref{c: blow-up agd}, the log pair groupoid $\Pair_D(M)$ integrates the Lie algebroid $\BAlg{TM}{TD}$ defined by~\ref{algblwp}, which is the log tangent bundle $T_D M$.

For manifolds $M$ with boundary $D$, the log pair groupoid $\Pair_D(M)$ is known as the {\it b}--stretched product~\cite{MR1348401} (See also~\cite{MR1394388}).  The groupoid $\Pair_D(M)$ is treated in greater depth in~\cite{MR1600121}.

\begin{theorem} \label{Prop: 1stBlowUpGpd}
       Let $(M, \pi)$ be a log symplectic manifold with degeneracy locus
        $D$, and let $p:\Pair_D(M)\to \Pair(M)$ be the blow-down groupoid morphism. 
        Then there is a unique log symplectic structure $\sigma$ on $\Pair_D(M)$ 
        such that $p_*(\sigma) = -\pi\times \pi$. This
        makes $(\Pair_D(M),\sigma)$ a Poisson groupoid over $(M, \pi)$, and the blow-down a morphism of Poisson groupoids.
\end{theorem}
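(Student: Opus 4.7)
The strategy is to combine the two blow-up theorems already established---the Poisson blow-up (Proposition~\ref{bbsympblowup}, backed by Theorem~\ref{poissonblowup}) and the Lie groupoid blow-up (Theorem~\ref{liegpdblowup})---and then to deduce multiplicativity by continuity. As a starting point, the pair groupoid $\Pair(M)$ equipped with $-\pi\times\pi$ is log symplectic with degeneracy locus $(D\times M)\cup(M\times D)$, and it is a Poisson groupoid over $(M,\pi)$ by the standard fact that negating the Poisson structure on one factor of the pair of a Poisson manifold yields a Poisson groupoid. The source-connected subgroupoid $\Pair^c(D)=\coprod_j D_j\times D_j$ is a closed Lie subgroupoid over the hypersurface $D$, and each component $D_j\times D_j$ is a closed Poisson submanifold of $\Pair(M)$ whose transverse Poisson structure vanishes (hence is trivially degenerate in the sense of Definition~\ref{degliealg}).

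First, I would invoke Proposition~\ref{bbsympblowup} componentwise (with $M_1=M_2=M$ and $D_1=D_2=D_j$) together with Theorem~\ref{poissonblowup} to produce a unique Poisson structure $\sigma$ on the open subset
\[
\Bl_{\Pair^c(D)}\bigl(\Pair(M)\bigr)\setminus\bigl(\thickbar{D\times M}\cup\thickbar{M\times D}\bigr)
\]
satisfying $p_\ast\sigma=-\pi\times\pi$, and to conclude that $\sigma$ is log symplectic with degeneracy locus the remaining part of the exceptional divisor. Next, I would apply Theorem~\ref{liegpdblowup} to the same data to endow this manifold with a Lie groupoid structure over $M$, and then pass to its source-connected subgroupoid to obtain $\Pair_D(M)$. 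The two constructions are compatible because the sets they remove agree: under the pair-groupoid source and target, $\thickbar{s^{-1}(D)}\cup\thickbar{t^{-1}(D)}$ is exactly $\thickbar{M\times D}\cup\thickbar{D\times M}$.

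To verify that $\sigma$ is multiplicative, I would argue by density. Let $U\subset\Pair_D(M)$ denote the complement of the exceptional divisor, on which $p$ restricts to a Poisson diffeomorphism intertwining the groupoid structures. Since $(\Pair(M),-\pi\times\pi)$ is a Poisson groupoid, the graph of its multiplication in $\Pair(M)^3$ is coisotropic for the product Poisson structure, and pulling back via $p\times p\times p$ shows that the graph $\Gamma_m\subset\Pair_D(M)^3$ is coisotropic with respect to $\sigma\oplus\sigma\oplus(-\sigma)$ on the dense open subset $\Gamma_m\cap U^3$. Coisotropy of a smooth submanifold with respect to a smooth Poisson structure is a closed condition, so it extends to all of $\Gamma_m$, and the blow-down $p$ is automatically a Poisson groupoid morphism. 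Uniqueness of $\sigma$ follows from the uniqueness in Theorem~\ref{poissonblowup}, or directly from the density of $U$ together with the requirement $p_\ast\sigma=-\pi\times\pi$.

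The main obstacle is really just bookkeeping: checking that the removed subsets in the Poisson and groupoid blow-up theorems coincide, and that the disconnectedness of $D$ causes no trouble---the off-diagonal components $D_i\times D_j$ ($i\neq j$) of $D\times D$ are not blown up, but since they already lie in $\thickbar{D\times M}\cap\thickbar{M\times D}$ they are removed from $\Pair_D(M)$, so they pose no obstruction to the log symplectic property.
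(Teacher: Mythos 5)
Your proposal is correct and follows essentially the same route as the paper: Proposition~\ref{bbsympblowup} supplies the log symplectic structure on the blow-up with the proper transforms of $M\times D$ and $D\times M$ removed, Theorem~\ref{liegpdblowup} supplies the groupoid structure on the same space, and multiplicativity of $\sigma$ follows by density of the complement of the exceptional divisor. Your extra bookkeeping about the off-diagonal components $D_i\times D_j$ ($i\neq j$) is a correct point the paper glosses over; the only quibble is your opening claim that $(\Pair(M),-\pi\times\pi)$ is itself log symplectic, which fails in the sense of Definition~\ref{bsymp} because its Pfaffian vanishes to second order along $D\times D$ --- this is precisely why the blow-up is needed, and it plays no role in your actual argument.
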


\begin{proof}
    Recall that $\Pair_D(M)=\BGpd{\Pair(M)}{\Pair^c(D)}^c$, defined by 
    \[
    \blowup{\Pair(M)}{\Pair^c(D)}\backslash\thickbar{s^{-1}(D)}\cup \thickbar{t^{-1}(D)},
    \] 
    where $s, t$ are the source and target projections $M\times M\to M$.  
    But $s^{-1}(D) = M\times D$ and $t^{-1}(D) = D \times M$, so by
    Proposition~\ref{bbsympblowup}, we obtain the required log
    symplectic structure $\sigma$ on $\Pair_D(M)$ lifting $-\pi\times\pi$ on $\Pair(M)$.  
    Since the graph of the multiplication on $\Pair_D(M)$ is coisotropic on an open dense subset (i.e.\ the complement of the exceptional divisor), it follows that the graph must be everywhere coisotropic, proving $(\Pair_D(M),\sigma)$ is a Poisson groupoid.
\end{proof}

\subsubsection{The symplectic pair groupoid}

Let $(M, \pi)$ be a proper log symplectic manifold, so that each connected component $D_j$ of the degeneracy locus $D$ is a symplectic fibre bundle $f_j: D_j\to \gamma_j$ over a circle $\gamma_j\cong S^1$, as explained in~\S\ref{sympmaptorus}.  With $\gamma = \coprod_{j\in\sD} \gamma_j$, we obtain a projection map  
\begin{equation*}
f : D\to \gamma, 
\end{equation*}
which endows the pair $(M, D)$ with a structure akin to that of a manifold with fibred boundary~\cite{MR1734130}. The Lie algebroid $T^*_\pi M$ of such a Poisson structure is isomorphic to the Lie algebroid $T_{D,f}M$ whose sheaf of sections is given by 
\[
T_{D,f}(U) = \{X\in \Gamma(U,TU) ~|~ X|_D\in \Gamma(D,\ker Tf)\},
\]
and this algebroid may be expressed as a blow-up in the sense of Definition~\ref{blpalg}, as follows.  The log tangent algebroid $T_D M$ restricts to $D$ to define a Lie algebroid $T_DM|_{D}$, naturally isomorphic to the Atiyah algebroid $\At(ND)$ of infinitesimal symmetries of the normal bundle of $D$. The composition of the projection in the exact sequence 
\[
\xymatrix{0\ar[r] & \underline{\RR}\ar[r] & \At(ND)\ar[r] & TD\ar[r] & 0}
\]
with $Tf:TD\to T\gamma$ has a kernel $\At_f(ND)\subset\At(ND)$, which may be viewed as a relative Atiyah algebroid with respect to the fibration $f$.  Therefore we obtain the following representation of the Lie algebroid underlying $T^*_\pi M$ as a blow-up:
\begin{equation}\label{expresblow}
T_{D,f}M = \BAlg{T_D M}{\At_f(ND)}.
\end{equation}
By construction, $\At_f(ND)$ is a subalgebroid of $T_DM$, and we now construct the corresponding subgroupoid of $\Pair_D(M)$, which may be viewed as the gauge groupoid of $ND$ relative to the fibration $f$.  

Inside the pair groupoid $\Pair^c(D)$, we have the pair groupoid relative to $f$, given by 
\begin{equation*}
\Pair^c_f(D) = \coprod_{j\in\sD}(D_j \times_{\gamma_j}D_j).
\end{equation*}
Its preimage in the exceptional divisor for the blow-down $p:\Pair_D(M)\to\Pair(M)$ is  
\begin{equation}\label{secondblowupcenter}
\mathbf{G L}^+_f(ND) = p^{-1}\left(\coprod_{j\in\sD}( D_j {\times}_{\gamma_j}D_j)\right).
\end{equation}
This defines a subgroupoid $\mathbf{GL}^+_f(ND)\rightrightarrows D$, with Lie algebroid 
$\At_f(ND)$, which is a codimension 2 symplectic leaf in the log symplectic 
manifold $\Pair_D(M)$.  We now perform a blow-up to obtain a symplectic groupoid integrating $T^*_\pi M$.

\begin{theorem} \label{sympgpd} Let $(M, \pi)$ be a proper log symplectic
    manifold with degeneracy locus $D\subset M$, and let $\mathbf{GL}^+_f(ND)\rightrightarrows D$  be the subgroupoid of $\Pair_D(M)\rightrightarrows M$ defined in~\eqref{secondblowupcenter}.  Then the groupoid 
    \begin{equation}\label{defpari}
    \Pair_\pi(M) = \BGpd{\Pair_D(M)}{\mathbf{GL}^+_f(ND)}
    \end{equation}
    has a unique symplectic structure $\omega$ such that the blow-down $\Pair_\pi(M)\to\Pair_D(M)$ is Poisson.  This makes $(\Pair_\pi(M),\omega)$ a symplectic groupoid integrating $(M,\pi)$, and the blow-down a morphism of Poisson groupoids.
\end{theorem}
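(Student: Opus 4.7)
The plan is to run Theorem~\ref{liegpdblowup} and Proposition~\ref{bsympblowup} in parallel on $(\Pair_D(M),\sigma)$ along the closed Lie subgroupoid $\mathbf{G L}^+_f(ND)\rightrightarrows D$. On the groupoid side, Theorem~\ref{liegpdblowup} endows $\Pair_\pi(M)=\BGpd{\Pair_D(M)}{\mathbf{G L}^+_f(ND)}$ with a Lie groupoid structure over $M$ making the blow-down $p:\Pair_\pi(M)\to\Pair_D(M)$ a base-preserving Lie groupoid morphism. By Corollary~\ref{c: blow-up agd}, the associated Lie algebroid is
\[
\Lie(\Pair_\pi(M))=\BAlg{\Lie(\Pair_D(M))}{\Lie(\mathbf{G L}^+_f(ND))}=\BAlg{T_DM}{\At_f(ND)}=T_{D,f}M,
\]
which is naturally identified with $T^*_\pi M$ via the discussion surrounding~\eqref{expresblow}, so $\Pair_\pi(M)$ is an integration of the Poisson algebroid of $(M,\pi)$.

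For the symplectic form, I would apply Proposition~\ref{bsympblowup} to $(\Pair_D(M),\sigma)$ with $F=\mathbf{G L}^+_f(ND)$, the codimension-$2$ symplectic leaf contained in the degeneracy locus of $\sigma$, namely the exceptional divisor $\til{\Pair^c(D)}$ of the first blow-up (as asserted in the paragraph preceding the theorem). This produces a unique log symplectic $\til\sigma$ on $\Bl{\Pair_D(M)}{\mathbf{G L}^+_f(ND)}$ with $p_*\til\sigma=\sigma$, nondegenerate off the proper transform of $\til{\Pair^c(D)}$. The key point is that inside $\Pair_D(M)$ one has $s^{-1}(D)=t^{-1}(D)=\til{\Pair^c(D)}$: any arrow with source in $D$ must lie in the exceptional divisor, since the off-exceptional portion of $D\times M$ is precisely the proper transform $\thickbar{D\times M}$ which was already removed in passing from $\Bl{\Pair(M)}{\Pair^c(D)}$ to $\Pair_D(M)$. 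Hence the locus removed to form $\Pair_\pi(M)=\BGpd{\Pair_D(M)}{\mathbf{G L}^+_f(ND)}$ coincides exactly with the degeneracy locus of $\til\sigma$, and $\omega:=\til\sigma^{-1}$ is a uniquely determined symplectic form on $\Pair_\pi(M)$ whose inverse pushes forward to $\sigma$ under $p$.

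Multiplicativity of $\omega$ follows by a density argument: away from the exceptional divisor of the second blow-up, $p$ is a local diffeomorphism and the graph of multiplication in $\Pair_\pi(M)^{3}$ pulls back through $p\times p\times p$ from that of $\Pair_D(M)^{3}$, which is coisotropic for $\sigma\oplus\sigma\oplus(-\sigma)$ by Theorem~\ref{Prop: 1stBlowUpGpd}. Since coisotropy is a closed condition, it extends to the entire graph in $\Pair_\pi(M)^{3}$, and as $\omega$ is nondegenerate, Definition~\ref{def: PoissonAgd} upgrades coisotropy to Lagrangianness, so $(\Pair_\pi(M),\omega)$ is a symplectic groupoid and $p$ is a morphism of Poisson groupoids. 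The main technical obstacle is the assertion that $\mathbf{G L}^+_f(ND)$ is genuinely a symplectic leaf of $\sigma$ rather than a union of leaves; I would verify this with a local computation using the linearization model of Theorem~\ref{Thm: GMP} and Proposition~\ref{totpois} to write $-\pi\times\pi$ explicitly near $D_j\times D_j$ and inspect the foliation of the exceptional divisor after blow-up, showing that its symplectic leaves are parametrized by the difference of the angular coordinates along $\gamma_j$ and that the vanishing-difference leaf is exactly $\mathbf{G L}^+_f(ND)$.
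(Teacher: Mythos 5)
Your proposal is correct and follows essentially the same route as the paper's proof: apply Theorem~\ref{liegpdblowup} for the groupoid structure, Proposition~\ref{bsympblowup} for the lifted Poisson structure, observe that the degeneracy locus of $\til\sigma$ (the proper transform of $s^{-1}(D)=t^{-1}(D)$, i.e.\ of the first exceptional divisor) is exactly the locus removed in forming $\BGpd{\Pair_D(M)}{\mathbf{GL}^+_f(ND)}$, and obtain multiplicativity of $\omega$ by continuity from the dense open set where the blow-down is a diffeomorphism. The one point you flag as an obstacle --- that $\mathbf{GL}^+_f(ND)$ is a symplectic leaf of $\sigma$ --- is simply asserted in the paper in the paragraph preceding the theorem, and your proposed linearized computation (leaves of the exceptional divisor cut out by the difference of the angular coordinates, with the zero-difference leaf being $p^{-1}(D\times_\gamma D)$) does check out and supplies a detail the paper omits.
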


\begin{proof}
	Let $(\Pair_D(M), \sigma)\rightrightarrows (M,\pi)$ be the Poisson groupoid constructed in Theorem~\ref{Prop: 1stBlowUpGpd}, and let $s, t$ be its source and target maps.  The subgroupoid $\mathbf{GL}^+_f(ND)\rightrightarrows D$ is a symplectic leaf in the degeneracy locus of the log symplectic manifold $(\Pair_D(M),\sigma)$.  By Theorem~\ref{bsympblowup}, the blow-up 
	\[
\blowup{\Pair_D(M)}{\mathbf{GL}^+_f(ND)}\xrightarrow{p'} \Pair_D(M)
	\]
inherits a unique Poisson structure $\til{\sigma}$ such that $p'_*\til{\sigma} = \sigma$, and $\til{\sigma}$ is symplectic away from the proper transform of the degeneracy locus $s^{-1}(D) = t^{-1}(D)$ of $(\Pair_D(M),\sigma)$.  But this is precisely the locus removed in the definition of $\BGpd{\Pair_D(M)}{\mathbf{GL}^+_f(ND)}$. Hence the groupoid $\Pair_\pi(M)$ inherits a unique symplectic form compatible with the blow-down.  
It remains is to show that $\omega$ is multiplicative, but this is true by continuity, since it holds away from the exceptional divisor.
\end{proof}

\begin{remark}
	The blowup of a source-connected Lie groupoid along a source-connected subgroupoid 
	may fail to be source-connected, since the exceptional divisor in~\eqref{defngpdblowup}
	consists of a projective bundle with two families of hyperplanes removed, and the complement of a pair of hyperplanes in $\RR P^n$ is generically disconnected. In the 
	case of the symplectic pair groupoid~\eqref{defpari}, however, the deleted loci coincide,
	and so $\Pair_\pi(M)$, as defined, is source-connected. 
\end{remark}



	\subsection{Adjoint groupoids} \label{Section: adjoint}

	In this section, we prove that the source-connected groupoids $\Pair_D(M)$ and
$\Pair_\pi(M)$ constructed in \S\ref{Construction} are in fact the adjoint groupoids
integrating $T_D M$ and $T^*_\pi M$, respectively.  

\begin{theorem}\label{reladjoint}
Let $\Gg\rightrightarrows M$ be a Lie groupoid, and $\Hh\rightrightarrows L$ 
a closed Lie subgroupoid over the closed hypersurface $L\subset M$. Let $\Ff\rra M$ be a source-connected Lie groupoid and $\varphi:\Ff\to\Gg$ a morphism covering $\id_M$.  

If $\Lie(\varphi)$ factors through the elementary modification 
$\BAlg{\Lie(\Gg)}{\Lie(\Hh)}\to \Lie(\Gg)$, 
then there exists a unique groupoid morphism 
$\til{\varphi}:\Ff\to\BGpd{\Gg}{\Hh}$ 
completing the following commutative diagram.
\begin{equation}
\begin{aligned}
\xymatrix{ & \BGpd{\Gg}{\Hh} \ar[d]^-{p} \\
\Ff \ar[r]_-{\varphi} \ar@{.>}[ur]^-{\til{\varphi}}	& \Gg }
\end{aligned}
\end{equation}
\end{theorem}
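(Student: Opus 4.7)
The plan is to apply Proposition~\ref{univblowup2} to lift $\varphi$ to the full blow-up $\blowup{\Gg}{\Hh}$, verify that the lift avoids the removed strict transforms so as to land in $\BGpd{\Gg}{\Hh}$, and finally check that the lift respects the groupoid structure. Uniqueness will be automatic from the universal property.

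The first task is to identify the hypersurface $Y = \varphi^{-1}(\Hh)\subset\Ff$. Since $\varphi$ covers $\id_M$, the factorization hypothesis amounts to $\Lie(\varphi)|_x(\Lie(\Ff)|_x)\subseteq \Lie(\Hh)|_x$ for each $x\in L$; composing with the anchors shows that the image of the anchor of $\Lie(\Ff)$ along $L$ lies in $TL$. Consequently every $\Ff$-orbit meeting $L$ is contained in $L$, so that the source-preimage $\widetilde{s}^{-1}(L)$ and the target-preimage $\widetilde{t}^{-1}(L)$ coincide, forming a hypersurface in $\Ff$ containing $Y$.

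The main step, and in my view the heart of the argument, is the reverse inclusion $\widetilde{s}^{-1}(L)\subseteq Y$. Fix $x\in L$; the subset of the connected source fiber $\widetilde{s}^{-1}(x)$ on which $\varphi$ lands in $\Hh$ contains $1_x$, and it is closed because $\Hh\subset\Gg$ is closed. To see it is open, near a point $f_0$ with $\varphi(f_0)\in\Hh$ the nearby arrows in the source fiber take the form $f_0\cdot\exp(X)$ for $X\in\Lie(\Ff)|_{\widetilde{t}(f_0)}$ small; since $\widetilde{t}(f_0)\in L$ by the previous step, naturality of the exponential combined with the factorization gives $\varphi(\exp(X))=\exp(\Lie(\varphi)(X))\in \Hh$, and multiplicativity of $\varphi$ together with closure of $\Hh$ under composition yields $\varphi(f_0\cdot\exp(X))\in \Hh$. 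Hence $Y = \widetilde{s}^{-1}(L)$ is a hypersurface.

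With $Y$ in hand, the injectivity of $N\varphi:NY\to \varphi^*N\Hh$ follows from $s\circ\varphi=\widetilde{s}$: the composition $Ns\circ N\varphi$ equals the canonical isomorphism $NY\cong \widetilde{s}^*NL$, so $N\varphi$ splits the surjection $Ns:N\Hh\to NL$ and is in particular injective. Proposition~\ref{univblowup2} then produces a unique smooth lift $\widetilde{\varphi}:\Ff\to\blowup{\Gg}{\Hh}$. To check that $\widetilde{\varphi}$ lands in $\BGpd{\Gg}{\Hh}$, note that off $Y$ it coincides with $\varphi$, whose image is disjoint from $s^{-1}(L)\cup t^{-1}(L)$ and hence from their strict transforms; along the exceptional divisor, the image line $N\varphi(N_fY)\subset N_{\varphi(f)}\Hh$ projects nontrivially to $NL$ under both $Ns$ and $Nt$ (by the symmetric argument), so it misses $\PP(K_s)\cup \PP(K_t)$, the traces of $\thickbar{s^{-1}(L)}$ and $\thickbar{t^{-1}(L)}$ on the exceptional divisor. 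Finally, compatibility of $\widetilde{\varphi}$ with source, target, identity, inverse, and multiplication holds on the dense open set $\Ff\setminus Y$ where $\widetilde{\varphi}=p^{-1}\circ\varphi$, and extends to all of $\Ff$ by continuity.
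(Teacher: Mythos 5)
Your proof is correct and follows essentially the same route as the paper's: use the factorization hypothesis to show $L$ is a union of $\Ff$--orbits, identify $\varphi^{-1}(\Hh)$ with the hypersurface $s^{-1}(L)=t^{-1}(L)$ via a connectedness/exponential argument, deduce injectivity of $N\varphi$ from the intertwining of source maps, and invoke Proposition~\ref{univblowup2} plus continuity. Your verification that the lift avoids $\PP(K_s)\cup\PP(K_t)$ on the exceptional divisor is in fact spelled out a bit more explicitly than in the paper, but it is the same argument.
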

\begin{proof}
	We show that $\varphi$ satisfies the criterion in 
	Proposition~\ref{univblowup2}, so lifts to a map $\til{\varphi}:\Ff\to \Bl{\Gg}{\Hh}$. 
	We then show that the image lies in $\BGpd{\Gg}{\Hh}$; 
	the remainder follows by continuity.
	
	Let $\fF, \gG, \hH$ be the Lie algebroids of $\Ff, \Gg, \Hh$, respectively.
	Observe that $L$ is a $\BAlg{\gG}{\hH}$--invariant submanifold, and since we have a 
	morphism $\fF\to\BAlg{\gG}{\hH}$, it is also a $\fF$--invariant submanifold.  
	Since $L$ is closed, this implies that it is a union of $\fF$--orbits, and since $\Ff$ is
	source-connected, $L$ is a union of $\Ff$--orbits. 
	So, if $s, t$ are the source and target of $\Ff$, we see that the full subgroupoid over $L$ is given by 
	\[
	\Ff|_{L} = s^{-1}(L)\cap t^{-1}(L) = s^{-1}(L),
	\]
	and since $s$ is a submersion, we conclude that 
	$\Ff|_{L}$ is a source-connected Lie subgroupoid of $\Ff$ of codimension 1.  
	In fact, $\Ff|_{L} = \varphi^{-1}(\Hh)$, which can be seen as follows.	
	Since $\Lie(\varphi)$ factors through $\Lie(p):\BAlg{\gG}{\hH}\to \gG$, we have 
	$\Lie(\varphi)(\fF|_L)\subset\hH$.  
	Therefore, the exponential map gives $\varphi(\Ff|_L)\subset \Hh$, 
	since $\Ff|_L$ is source-connected.  
	For the reverse inclusion, note that if $k\in\Ff\setminus\Ff|_L$, 
	then $s(k)\notin L$, so $\varphi(k)\notin \Hh$.  
	
	Since we have shown that $\varphi^{-1}(\Hh)$ is a hypersurface, 
	all that remains is to show $N\varphi|_{\Ff|_L}$ is injective.
	But this follows from the fact that $\varphi$ intertwines the 
	source maps of $\Ff$ and $\Gg$, which are submersions. 
\end{proof}

\begin{corollary} \label{MontAdjoint} 
	The log pair groupoid $\Pair_D(M)\rightrightarrows M$ associated to a closed hypersurface 
	$D \subset M$ is the adjoint integration of $T_D M$.
\end{corollary}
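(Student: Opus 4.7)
The plan is to verify the universal property of the adjoint integration directly: show that for every source-connected integration $\Ff\rightrightarrows M$ of $T_DM$, there is a unique base-preserving morphism $\til\varphi:\Ff\to\Pair_D(M)$. The key observation is that any Lie groupoid $\Ff\rightrightarrows M$ admits the canonical source-target morphism $\varphi=(s_\Ff,t_\Ff):\Ff\to\Pair(M)$, and this will lift to the blow-up via Theorem~\ref{reladjoint}.

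First I would set up the inputs. The components $\{D_j\}_{j\in\sD}$ of $D$ yield the closed, source-connected Lie subgroupoid $\Pair^c(D)=\coprod_{j} D_j\times D_j\subset\Pair(M)$ over the closed hypersurface $D$, whose Lie algebroid is $TD$. By Definition~\ref{blpalg} we have the identification $T_DM=\BAlg{TM}{TD}$. The canonical morphism $\varphi=(s_\Ff,t_\Ff):\Ff\to\Pair(M)$ induces on Lie algebroids precisely the anchor $a:T_DM\to TM$, which is the defining inclusion of the elementary modification; in particular $\Lie(\varphi)$ factors through $\Lie(p):\BAlg{TM}{TD}\to TM$ via the identity on $T_DM$.

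Now I would apply Theorem~\ref{reladjoint} with $\Gg=\Pair(M)$ and $\Hh=\Pair^c(D)$ to produce a unique groupoid morphism $\til\varphi:\Ff\to\BGpd{\Pair(M)}{\Pair^c(D)}$ with $p\circ\til\varphi=\varphi$. Because $\Ff$ is source-connected, each source fibre of $\Ff$ is sent by $\til\varphi$ into the connected component of the identity in the corresponding source fibre of $\BGpd{\Pair(M)}{\Pair^c(D)}$, so the image in fact lies in the source-connected subgroupoid $\Pair_D(M)$.

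Uniqueness is then immediate: any morphism $\psi:\Ff\to\Pair_D(M)$ covering $\id_M$ gives, upon composing with the blow-down, a morphism $p\circ\psi:\Ff\to\Pair(M)$ covering $\id_M$; but the only such morphism to the pair groupoid is $(s_\Ff,t_\Ff)=\varphi$, so $p\circ\psi=p\circ\til\varphi$, and the uniqueness clause of Theorem~\ref{reladjoint} forces $\psi=\til\varphi$. I do not anticipate a substantial obstacle: the technical content is absorbed by Theorem~\ref{reladjoint}, and what remains is essentially the bookkeeping check that $\Pair^c(D)$ is a closed subgroupoid over a hypersurface whose elementary modification recovers $T_DM$, together with the fact that $(s_\Ff,t_\Ff)$ is the universal morphism into $\Pair(M)$.
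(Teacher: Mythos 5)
Your proposal is correct and follows essentially the same route as the paper: apply Theorem~\ref{reladjoint} to the canonical morphism $(t,s):\Ff\to\Pair(M)$, whose induced algebroid map is the anchor and hence factors through $\BAlg{TM}{TD}=T_DM$. Your extra remarks (that the image lands in the source-connected subgroupoid, and the uniqueness of the lift) are correct but already built into the paper's framework, since source-connectedness is preserved by base-preserving morphisms and the category of integrations admits at most one morphism between any two objects.
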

\begin{proof} 
    If $\Gg \rightrightarrows M$ is any source-connected integration
    of $T_D M$, then the source and target maps define a groupoid morphism 
    $\varphi = (t,s)$ from $\Gg$ to $\Pair(M)$, 
    whose Lie algebroid morphism is the anchor map of $T_D M$.  
    Since $T_DM = \BAlg{TM}{TD}$, Theorem~\ref{reladjoint} gives a lift 
    $\til\varphi: \Gg \rightarrow \Pair_D(M)$, completing the following commutative 
    diagram and establishing the claim.
		\begin{equation*}
    		\xymatrix{ 		& \Pair_D(M)\ar[d]^-{p}\\
						\Gg\ar[r]_-{\varphi}\ar@{.>}[ru]^-{{\til\varphi}} & \Pair(M)}
    	\end{equation*} 
\end{proof}
%

\begin{corollary} \label{SymGpdAdjoint}
    The symplectic pair groupoid $\Pair_\pi(M)$ of the proper log symplectic manifold $(M, \pi)$ is the adjoint integration of $T^*_\pi M$. 
\end{corollary}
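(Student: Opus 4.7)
The plan is to imitate the proof of Corollary~\ref{MontAdjoint}, applying the universal lifting property of Theorem~\ref{reladjoint} twice in order to climb through the two successive blow-ups that build $\Pair_\pi(M)$. Given any source-connected integration $\Gg\rightrightarrows M$ of $T^*_\pi M$, the goal is to produce a unique base-preserving morphism $\Gg\to \Pair_\pi(M)$.

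First, I would observe that the source-target map $\varphi_0=(t,s):\Gg\to\Pair(M)$ is automatically a base-preserving groupoid morphism whose associated Lie algebroid morphism is the anchor $\pi:T^*_\pi M\to TM$. Because $D$ is a Poisson hypersurface, the image of $\pi$ lies in the log tangent bundle $T_DM=\BAlg{TM}{TD}$, so Theorem~\ref{reladjoint} applied to the closed subgroupoid $\Pair^c(D)\subset \Pair(M)$ yields a unique lift $\til{\varphi}_0:\Gg\to\Pair_D(M)$. This is essentially the content of Corollary~\ref{MontAdjoint}, specialized to the particular integration $\Gg$.

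Second, to lift further, I would invoke the key identification~\eqref{expresblow}, namely $T^*_\pi M = T_{D,f}M = \BAlg{T_DM}{\At_f(ND)}$. The induced Lie algebroid morphism $\Lie(\til{\varphi}_0):T^*_\pi M\to T_DM$ is by construction exactly the map associated to this elementary modification. Since $\mathbf{GL}^+_f(ND)\rightrightarrows D$ is a closed Lie subgroupoid of $\Pair_D(M)$ over the closed hypersurface $D$, with Lie algebroid $\At_f(ND)$, a second application of Theorem~\ref{reladjoint} produces the required unique base-preserving lift $\Gg\to\Pair_\pi(M)=\BGpd{\Pair_D(M)}{\mathbf{GL}^+_f(ND)}$.

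Uniqueness is built into each application of Theorem~\ref{reladjoint}, so the terminal-object property giving the adjoint integration is immediate. I anticipate no genuine obstacle: the hypotheses of Theorem~\ref{reladjoint} (closedness of the subgroupoid over a closed hypersurface, and factorization of the Lie algebroid morphism through the elementary modification) are both already supplied by the constructions of \S\ref{Construction}. The only subtlety worth double-checking is the second factorization, which is really the content of~\eqref{expresblow} together with the fact that $\mathbf{GL}^+_f(ND)$ was designed precisely so that its Lie algebroid is the relative Atiyah algebroid $\At_f(ND)$; once these identifications are noted, the corollary is a two-step formal consequence exactly parallel to the single-step argument of Corollary~\ref{MontAdjoint}.
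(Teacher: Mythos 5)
Your proposal is correct and follows the paper's own argument essentially verbatim: both lift $\varphi_0=(t,s):\Gg\to\Pair(M)$ to $\Pair_D(M)$ via the factorization of the anchor through $T_DM=\BAlg{TM}{TD}$, and then to $\Pair_\pi(M)$ via the identification $T^*_\pi M=\BAlg{T_DM}{\At_f(ND)}$, applying Theorem~\ref{reladjoint} at each step. The only difference is that you spell out the second factorization more explicitly than the paper does, which is harmless.
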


\begin{proof}
	The degeneracy locus $D\subset M$ of the Poisson structure $\pi$ is a closed hypersurface, 
	and the anchor map $T_\pi M\to TM$ for the Poisson algebroid factors through the elementary 
	modification $T_D M\to TM$.  Therefore, if $\Gg\rightrightarrows M$ is any 
	source-connected integration of 
    $T^*_\pi M$, we use Theorem~\ref{reladjoint}, as in the proof 
    of Corollary~\ref{MontAdjoint}, to show that the canonical map
    $\varphi_0=(t,s):\Gg\to \Pair(M)$ lifts to a morphism $\varphi_1:\Gg\to \Pair_D(M)$, 
    and further to $\varphi_2:\Gg\to\Pair_\pi(M)$, establishing the result. 
\end{proof}
%
%

\begin{theorem} \label{thm: blcomp}
Let $\Gg\rra M$ be a Lie groupoid, and $\Kk\subset\Hh$ an inclusion of subgroupoids, each over the closed hypersurface $L\subset M$.  Then the proper transform $\thickbar{\Kk}$ is a Lie subgroupoid of $\BGpd{\Gg}{\Hh}$ over $L$, and we have a natural isomorphism of groupoids
\[
\BGpd{\Gg}{\Kk}^c \cong \BGpd{\BGpd{\Gg}{\Hh}}{\thickbar{\Kk}}^c.
\] 
\end{theorem}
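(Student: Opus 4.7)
My plan is to prove the claim by (i) identifying the Lie algebroid of $\thickbar\Kk$ as a natural elementary modification, and then (ii) producing mutually inverse morphisms via Theorem~\ref{reladjoint}. Since $\Kk\subset\Hh$, the formula $\overline{p^{-1}(\Kk\setminus\Hh)}$ gives the empty set; instead, I take $\thickbar\Kk$ to be the set-theoretic preimage $p^{-1}(\Kk)\cap\BGpd{\Gg}{\Hh}$. Over each point of $\Kk$, this is the projectivised normal fibre $\PP(N_\Gg\Hh)|_\Kk$ with the two hyperplane bundles $\thickbar{s^{-1}(L)}$, $\thickbar{t^{-1}(L)}$ removed. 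Since the preimage of a subgroupoid is closed under the groupoid operations of $\BGpd{\Gg}{\Hh}$, and the source map restricts to a submersion onto $L$, this makes $\thickbar\Kk$ a Lie subgroupoid of $\BGpd{\Gg}{\Hh}$ over $L$.

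Next, by Corollary~\ref{c: blow-up agd}, the restriction $\BAlg{\gG}{\hH}|_L$ fits in a canonical short exact sequence
\[
0 \to N^*L\otimes(\gG|_L/\hH) \to \BAlg{\gG}{\hH}|_L \to \hH \to 0,
\]
where the surjection is the Lie algebroid morphism of the blow-down $\BGpd{\Gg}{\Hh}\to\Gg$ restricted to $L$. Since the blow-down restricts to a groupoid morphism $\thickbar\Kk\to\Kk$, functoriality of the Lie functor places $\Lie(\thickbar\Kk)$ inside the preimage of $\kK$ under this surjection; a rank count using the first step then shows the two subalgebroids coincide. From this identification one obtains immediately
\[
\BAlg{\BAlg{\gG}{\hH}}{\Lie(\thickbar\Kk)} = \BAlg{\gG}{\kK}
\]
as subsheaves of $\gG$, because a local section $X$ of $\BAlg{\gG}{\hH}$ satisfies $X|_L\in\Lie(\thickbar\Kk)$ if and only if its image in $\hH$ lies in $\kK$, which is just the condition $X|_L\in\kK$.

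With this algebroid identification in hand, the isomorphism follows formally from Theorem~\ref{reladjoint}. The blow-down $\BGpd{\Gg}{\Kk}^c\to\Gg$ has Lie algebroid morphism $\BAlg{\gG}{\kK}\hookrightarrow\gG$, which factors through $\BAlg{\gG}{\hH}$ since $\kK\subset\hH$; applying Theorem~\ref{reladjoint} yields a unique lift to $\BGpd{\Gg}{\Hh}^c$, whose Lie algebroid morphism $\BAlg{\gG}{\kK}=\BAlg{\BAlg{\gG}{\hH}}{\Lie(\thickbar\Kk)}\hookrightarrow\BAlg{\gG}{\hH}$ permits a second application, producing a unique morphism $\varphi:\BGpd{\Gg}{\Kk}^c\to\BGpd{\BGpd{\Gg}{\Hh}}{\thickbar\Kk}^c$. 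Symmetrically, the composed blow-down from the right-hand side has Lie algebroid morphism $\BAlg{\gG}{\kK}\hookrightarrow\gG$, giving a unique $\psi:\BGpd{\BGpd{\Gg}{\Hh}}{\thickbar\Kk}^c\to\BGpd{\Gg}{\Kk}^c$. The compositions $\psi\circ\varphi$ and $\varphi\circ\psi$ both cover the identity on $\Gg$, so the uniqueness clause of Theorem~\ref{reladjoint} forces each to be the identity, establishing the claimed isomorphism.

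The main obstacle is the identification of $\Lie(\thickbar\Kk)$ with the preimage of $\kK$ in $\BAlg{\gG}{\hH}|_L$: the inclusion is immediate from the restricted blow-down morphism $\thickbar\Kk\to\Kk$, but the matching of ranks requires a dimension count that uses both the transversality of the source and target maps of $\Hh$ to $L$ (so that $\PP(N_\Gg\Hh)|_\Kk$ has the expected dimension) and the explicit removal of the hyperplanes $\thickbar{s^{-1}(L)}\cup\thickbar{t^{-1}(L)}$ in the definition of $\BGpd{\Gg}{\Hh}$.
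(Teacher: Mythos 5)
Your proposal is correct and follows essentially the same route as the paper: identify $\thickbar{\Kk}=p^{-1}(\Kk)$ as a Lie subgroupoid, match the elementary modifications $\BAlg{\gG}{\kK}=\BAlg{\BAlg{\gG}{\hH}}{\Lie(\thickbar{\Kk})}$ at the algebroid level, and then obtain mutually inverse morphisms from Theorem~\ref{reladjoint} together with the uniqueness of morphisms between source-connected integrations. If anything, your treatment of the algebroid step is more careful than the paper's, which simply writes $\BAlg{\gG}{\kK}=\BAlg{\BAlg{\gG}{\hH}}{\kK}$ and leaves implicit the identification of $\Lie(\thickbar{\Kk})$ with the preimage of $\kK$ under the surjection $\BAlg{\gG}{\hH}|_L\to\hH$ that you establish via the exact sequence and rank count.
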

\begin{proof}
We write the blow-down maps as $p: \BGpd{\Gg}{\Hh} \rightarrow \Gg$, $q: \BGpd{\BGpd{\Gg}{\Hh}}{\thickbar{\Kk}} \rightarrow \BGpd{\Gg}{\Hh}$ and $r: \BGpd{\Gg}{\Kk} \rightarrow \Gg$, all of which are Lie groupoid morphisms.

Since $\Kk \subset \Hh$, we have $\thickbar{\Kk} = p^{-1}(\Kk)$. Restrict the source $\til{s} = s \circ p: \BGpd{\Gg}{\Hh} \rightarrow M$ to $\thickbar{\Kk}$. Since $p: \thickbar{\Kk} \rightarrow \Kk$ and $s: \Kk \rightarrow L$ are submersions, it follows that $\til{s}: {\thickbar{\Kk}} \rightarrow L$ is submersion. Similarly, $\til{t}: {\thickbar{\Kk}} \rightarrow L$ is a submersion. Since $p$ is a groupoid morphism and $p(\thickbar{\Kk}) = \Kk$, it follows that $\til{\id}(L)$ and $\til{m}(\thickbar{\Kk}\times_L\thickbar{\Kk})$ both lie in $\thickbar{\Kk}$. This shows $\thickbar{\Kk}$ is a Lie subgroupoid of $\BGpd{\Gg}{\Hh}$.


Let $\gG$, $\hH$, $\kK$ be the Lie algebroids of $\Gg$, $\Hh$, $\Kk$, respectively. Since $\kK\subset\hH$ and $\hH$ is a Lie subalgebroid of $\gG$, it follows from Definition \ref{blpalg} that
\[
\BGpd{\gG}{\kK} = \BGpd{\BGpd{\gG}{\hH}}{\kK}.
\]

Since $\Lie(p \circ q): \BGpd{\BGpd{\gG}{\hH}}{\kK} \rightarrow \gG$ factors through $\BGpd{\gG}{\kK}$, by Theorem \ref{reladjoint}, we obtain a Lie groupoid morphism
\[
\varphi: \BGpd{\BGpd{\Gg}{\Hh}}{\thickbar{\Kk}}^c \rightarrow \BGpd{\Gg}{\Kk}.
\]
Likewise, $\Lie(r): \BGpd{\gG}{\kK} \rightarrow \gG$ factors through $\BGpd{\BGpd{\gG}{\hH}}{\kK}$, so we obtain the morphism
\[
\phi:\BGpd{\Gg}{\Kk}^c \rightarrow \BGpd{\BGpd{\Gg}{\Hh}}{\thickbar{\Kk}}.
\]
Since $\varphi \circ \phi: \BGpd{\Gg}{\Kk}^c \rightarrow \BGpd{\Gg}{\Kk}^c$ is a Lie groupoid morphism covering $\id$ on $\BGpd{\gG}{\kK}$, it follows that $\varphi \circ \phi$ is an automorphism of $\BGpd{\Gg}{\Kk}^c$, showing that $\BGpd{\Gg}{\Kk}^c \cong \BGpd{\BGpd{\Gg}{\Hh}}{\thickbar{\Kk}}^c$.
\end{proof}

\begin{corollary}
The symplectic pair groupoid $\Pair_\pi(M)$ may be alternatively constructed as $\BGpd{\Pair(M)}{\Pair_{f}^c(D)}^c$.
\end{corollary}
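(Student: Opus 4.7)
The plan is to derive this identity as a direct application of the composition Theorem~\ref{thm: blcomp}. I would take $\Gg = \Pair(M)$ equipped with the nested pair of closed Lie subgroupoids $\Pair^c_f(D) \subset \Pair^c(D)$, both of which live over the same closed hypersurface $D \subset M$ (the inclusion is immediate since each fibre product $D_j \times_{\gamma_j} D_j$ sits inside the product $D_j \times D_j$). Theorem~\ref{thm: blcomp} then yields
\[
\BGpd{\Pair(M)}{\Pair^c_f(D)}^c \;\cong\; \BGpd{\BGpd{\Pair(M)}{\Pair^c(D)}}{\thickbar{\Pair^c_f(D)}}^c,
\]
so all that remains is to identify the right-hand side with $\Pair_\pi(M)$.

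For this identification, I would exploit the local nature of projective blow-up. By Definition~\ref{Def: LogPairGpd}, the log pair groupoid $\Pair_D(M)$ is the source-connected, hence open, Lie subgroupoid of $\BGpd{\Pair(M)}{\Pair^c(D)}$, and by construction~\eqref{secondblowupcenter} the intersection $\thickbar{\Pair^c_f(D)}\cap \Pair_D(M)$ is precisely $\mathbf{GL}^+_f(ND)$. Since blow-up is local, restricting the outer blow-up to the open subgroupoid $\Pair_D(M)$ produces $\BGpd{\Pair_D(M)}{\mathbf{GL}^+_f(ND)}$, which is $\Pair_\pi(M)$ by definition~\eqref{defpari}. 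The identity bisection lies in $\Pair_D(M)$, so the source-connected component of the outer blow-up cannot leave this open subgroupoid; and by the remark following Theorem~\ref{sympgpd}, $\Pair_\pi(M)$ is itself source-connected, so equals its own source-connected component. This chains together to give $\BGpd{\BGpd{\Pair(M)}{\Pair^c(D)}}{\thickbar{\Pair^c_f(D)}}^c \cong \Pair_\pi(M)$, completing the proof.

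The only subtle point, which I expect to be the main obstacle, is justifying that the source-connected operation commutes with the two-step blow-up in the way used above. This reduces to the geometric fact that the blow-up of a connected manifold along a proper submanifold remains connected, applied source fibre by source fibre; combined with the locality of blow-up and the containment of the identity bisection in $\Pair_D(M)$, this makes the identification rigorous without further computation.
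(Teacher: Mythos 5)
Your proposal is correct and follows essentially the same route as the paper: both proofs reduce the statement to the composition Theorem~\ref{thm: blcomp} applied to the nested subgroupoids $\Pair^c_f(D)\subset\Pair^c(D)\subset\Pair(M)$, using that $\thickbar{\Pair^c_f(D)}=p^{-1}(\Pair^c_f(D))$ and that $\Pair_\pi(M)$ is source-connected. You are in fact somewhat more careful than the paper in justifying why the source-connected component of $\BGpd{\BGpd{\Pair(M)}{\Pair^c(D)}}{\thickbar{\Pair^c_f(D)}}$ lands inside the open subgroupoid $\Pair_D(M)$ (the paper silently inserts the superscript $c$ in its chain of identities), which is a worthwhile clarification but not a different argument.
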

\begin{proof}
Recall that $\Pair_{f}^c(D) \subset \Pair^c(D) \subset \Pair(M)$ and the symplectic pair groupoid $\Pair_\pi(M)$ is constructed as
\begin{align*}
\Pair_\pi(M) & = \BGpd{\Pair_D(M)}{p^{-1}(\Pair_{f}^c(D))} \\
& = \BGpd{\BGpd{\Pair(M)}{\Pair^c(D)}^c}{p^{-1}(\Pair_{f}^c(D))} \\
& = \BGpd{\BGpd{\Pair(M)}{\Pair^c(D)}^c}{\thickbar{\Pair_{f}^c(D)}}.
\end{align*}
where $p: \Pair_D(M) \rightarrow \Pair(M)$ is the blow-down map. Since $\Pair_\pi(M)$ is source-connected, by Theorem \ref{thm: blcomp}, we obtain $\Pair_\pi(M) = \BGpd{\Pair(M)}{\Pair_{f}^c(D)}^c$.
\end{proof}


\section{Gluing and classification of groupoids} \label{Section: Gluing}
In this section, we present a general method for constructing Lie groupoids on a manifold by gluing together groupoids defined on the open sets of a covering. For this to be possible, the open cover must be adapted to the orbits of the lie algebroid in question; groupoids are inherently global objects and in general cannot be so easily decomposed. In~\S\ref{subsec: Gluing}, we consider a simple kind of cover, called an \emph{orbit cover}, which permits the result to hold. 

We then use this construction to explicitly describe the category of all groupoids integrating the log tangent bundle of a closed hypersurface $D\subset M$, as well as all Hausdorff symplectic groupoids integrating proper log symplectic manifolds in any dimension.  This involves solving a local classification problem near each component of the degeneracy locus, and then combining these local results in a specific manner using a graph constructed from the global geometry of the manifold and its embedded hypersurfaces. 

	\subsection{Orbit covers and gluing of groupoids} \label{subsec: Gluing}

	A typical way to construct manifolds is by the \emph{fibered coproduct} operation, also known as gluing.  If $M_1$, $M_2$ are manifolds equipped with open immersions $i_1:U\hookrightarrow M_1$, $i_2:U\hookrightarrow M_2$ from a manifold $U$, then the fibered coproduct is given by 
\[
M_1\coprod_{U} M_2 = \frac{M_1\coprod M_2}{i_1(x)\sim i_2(x)\ \forall  x\in U}.
\]
The caveat is that the resulting space is only Hausdorff when the graph of the equivalence relation above is closed in $M_1\times M_2$.  So, the fibered coproduct of manifolds is a possibly non-Hausdorff manifold.  

Suppose that $A$ is a Lie algebroid over a fibered coproduct of manifolds as above.  
We would like to construct 
a Lie groupoid integrating $A$
by gluing integrations over $M_1$ and $M_2$ using 
open immersions of an integration over $U$.  
But, groupoids are non-local, and 
such a simple gluing construction is not generally possible.  For example, the 
fundamental groupoid of the fibered coproduct should contain paths joining points in $M_1$ 
with points in $M_2$, and these may not be present in either of the fundamental 
groupoids of $M_1, M_2$.  In general, a groupoid coproduct operation is required,
whereby compositions absent from the naive gluing of spaces are formally 
adjoined.  However, we are able to avoid this complication, by using a
decomposition of the base which is adapted to the orbits of the Lie algebroid.
Heuristically, we are able to naively glue groupoids, but only along interfaces 
where they are actually local over the base.   

Essentially the same strategy was used by Nistor~\cite{MR1774632}
to obtain groupoids of interest in the theory of pseudodifferential operators; the setup 
we present below, while less general than his theory of $A$-invariant stratifications, 
is well-adapted for our purpose, which is to classify integrations of log tangent and log symplectic algebroids.    
  
\begin{definition}
    Let $A$ be a Lie algebroid over the manifold $M$.  An open cover $\{U_i\}_{i\in I}$ 
    of $M$ is called an \defw{orbit cover} if each orbit of the Lie algebroid 
    is completely contained in at least one of the open sets $U_i$.
\end{definition}

\begin{definition}
Given a groupoid $\Gg\rightrightarrows M$ and an open set $U\subset M$, we define the \defw{restriction} of the groupoid to $U$ to be the Lie subgroupoid
\[
\Gg|_U = s^{-1}(U)\cap t^{-1}(U).
\]
If $A$ is an integrable Lie algebroid, we define $\mathbf{R}_U$ to be the restriction functor from the category of source-connected integrations of $A$ to that of $A|_U$:
\begin{equation*}
	\begin{aligned}
	\mathbf{R}_U: 		\mathbf{Gpd}(A) &\rightarrow \mathbf{Gpd}(A|_U)\\
					 	(\Gg,\phi)&\mapsto ((\Gg|_U)^c, \phi|_U).				
	\end{aligned}
\end{equation*}
\end{definition}
\begin{remark}
If $S\subset M$ is a submanifold which is closed and $A$--invariant, meaning $a(A)|_S\subset TS$, then any Lie algebroid orbit $\Oo$ intersecting $S$ must be contained in $S$, and so $s^{-1}(S) = t^{-1}(S)$ for any source-connected groupoid $\Gg$ integrating $A$. For this reason, the restriction  
\[
\Gg|_S = s^{-1}(S) \cap t^{-1}(S) = s^{-1}(S)
\]
is a submanifold of $\Gg$, and so defines a source-connected Lie subgroupoid $\Gg|_S\rightrightarrows S$ of $\Gg$.
\end{remark}


\begin{theorem} \label{Thm: MfldvanKampen} 	
	Let $A$ be an integrable Lie algebroid over $M$, and 
	let $\{U_i\}_{i\in I}$ be a locally finite orbit cover of $M$.
    For each $i\in I$, let $\Gg_i \rightrightarrows U_i$ be a source-connected Lie
    groupoid integrating $A|_{U_i}$, and for each $i, j\in I$, let 
    \[
    \phi_{ij}: \R_{U_i\cap U_j}(\Gg_i) \stackrel{\cong}{\longrightarrow} \R_{U_i\cap U_j}(\Gg_j)
    \]
    be an isomorphism of integrations of $A|_{U_i}$, such that $\phi_{ii} =\id$, $\phi_{ij}=\phi_{ji}^{-1}$, and $\phi_{jk}\phi_{ij}=\phi_{ik}$ for all $i, j, k \in I$, on the appropriate intersections.  This defines an equivalence relation, whereby $x\sim \phi_{ij}(x)$ for all $x\in \R_{U_i\cap U_j}(\Gg_i)$ and for all $i, j\in I$.  Then, we have the following:
    
\begin{enumerate}
\item  
	The fibered coproduct of manifolds
    \begin{equation}\label{gpdglued}
    \Gg = \left.{\coprod_{i\in I} \Gg_i}\right/\sim
    \end{equation}
    is a source-connected Lie groupoid integrating $A$, 
    such that $\R_{U_i}(\Gg)=\Gg_i$.   
\item 
	The inclusion morphisms $\iota_k:\Gg_k\hookrightarrow \Gg,\ k\in I$ make $\Gg$ 
    a groupoid coproduct, meaning that for any groupoid which receives compatible 
    morphisms from $\{\Gg_i\}_{i\in I}$, these morphisms must factor 
    through a uniquely defined morphism from $\Gg$.  
\item 
	Every source-connected groupoid integrating $A$ is the fibered (manifold) coproduct of its
    restrictions to the orbit cover $\{U_i\}_{i\in I}$.  
\end{enumerate}
\end{theorem}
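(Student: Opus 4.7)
The plan is to construct $\Gg$ as a fibered coproduct of smooth manifolds, lift all groupoid operations from the individual $\Gg_i$ using the orbit cover property, and then verify (2) and (3) as formal consequences of the resulting universal property.

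For (1), I would first build $\Gg=\coprod_i \Gg_i/\sim$ as a smooth (possibly non-Hausdorff) manifold by Godement's criterion, so that each $\iota_i:\Gg_i\hookrightarrow \Gg$ is an open embedding. The source, target, identity, and inverse maps descend to $\Gg$ immediately, since each $\phi_{ij}$ is a groupoid morphism covering $\id_{U_i\cap U_j}$. The delicate point is the multiplication $\til m:\Gg\times_M\Gg\to\Gg$: given a composable pair $(g,h)$, the points $s(g)$, $t(g)=s(h)$, $t(h)$ all lie on a single $A$-orbit $\Oo$, which by the orbit cover hypothesis is contained in some $U_k$. The main step is to show that both $g$ and $h$ admit representatives in $\Gg_k$. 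If $g$ is represented in $\Gg_{i_0}$, source-connectedness of $\Gg_{i_0}$ gives a path from $\id(s(g))$ to $g$ whose target image lies in the $\Gg_{i_0}$-orbit of $s(g)$, hence in $\Oo\cap U_{i_0}\subset U_k\cap U_{i_0}$. This shows $g\in \R_{U_k\cap U_{i_0}}(\Gg_{i_0})$, so $\phi_{i_0 k}$ transports $g$ into $\Gg_k$; likewise for $h$. Define $gh$ by multiplication in $\Gg_k$. Independence of the choice of $k$ is immediate from the compatibility of the $\phi_{ij}$ with multiplication; smoothness is local, and the groupoid axioms hold because they hold in each $\Gg_k$.

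Still within (1), the Lie algebroid is local, and the identifications $\Lie(\iota_i)$ glue coherently because each $\phi_{ij}$ covers $\id$ at the Lie algebroid level; so $\Lie(\Gg)\cong A$. Source-connectedness of $\Gg$ follows by the same orbit argument: for any $x\in M$, choosing $U_k$ to contain the $A$-orbit of $x$ gives $s^{-1}(x)\subset \iota_k(\Gg_k)$, which is connected. Finally, $\R_{U_i}(\Gg)=\Gg_i$ because $\iota_i(\Gg_i)$ is a source-connected open subgroupoid of $\Gg|_{U_i}$ containing the identity bisection, and hence agrees fibrewise with the source-connected component of each source fibre of $\Gg|_{U_i}$.

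For (2), given groupoid morphisms $\psi_i:\Gg_i\to\Hh$ with $\psi_j\circ\phi_{ij}=\psi_i$ on overlaps, the universal property of fibered coproducts of manifolds yields a unique smooth map $\psi:\Gg\to\Hh$, and the groupoid identities hold because they hold on each $\iota_k(\Gg_k)$. For (3), given any source-connected integration $\Gg'$ of $A$, setting $\Gg'_i=\R_{U_i}(\Gg')$ produces data satisfying the cocycle condition (with $\phi_{ij}$ given by equality in $\Gg'$), and the resulting canonical map $\Psi:\coprod \Gg'_i/\sim \to \Gg'$ is a local diffeomorphism, surjective by the orbit argument applied to $s(g)$ for any $g\in\Gg'$, and injective because the equivalence relation is precisely equality in $\Gg'$; hence $\Psi$ is an isomorphism of Lie groupoids.

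The main obstacle is the well-definedness and smoothness of $\til m$, where the orbit cover hypothesis together with the source-connectedness of each $\Gg_i$ is used essentially: without the orbit cover property, a composable pair could straddle charts with no common refinement, and without source-connectedness one could not transfer a given representative through the $\phi_{ij}$ into a chart $\Gg_k$ in which both factors live.
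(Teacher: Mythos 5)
Your proposal is correct and follows essentially the same route as the paper: the heart of both arguments is that for a composable pair the relevant source fibre lies entirely over $U_{i_0}\cap U_k$ for a chart $U_k$ containing the full $A$-orbit, so that source-connectedness places both factors in $\R_{U_{i_0}\cap U_k}(\Gg_{i_0})$ and $\phi_{i_0 k}$ transports them into a single chart where multiplication is defined. The remaining points (gluing of the other structure maps, source-connectedness of $\Gg$, and parts (2) and (3) via the universal property) are handled the same way as in the paper.
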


\begin{proof}
	The fibered coproduct $\Gg$ in~\eqref{gpdglued} immediately inherits 
	submersions $s, t :\Gg\to M$, and the embedding $\id:M\to\Gg$ from 
	the corresponding maps on the component groupoids $\Gg_i$, by the 
	universal property of coproducts.  For example, the source maps 
	$s_i:\Gg_i\to U_i$ and $s_j:\Gg_j\to U_j$ satisfy $s_i = s_j \phi_{ij}$ 
	as maps $\R_{U_i\cap U_j}(\Gg_i)\to U_i\cap U_j$, so by the universal 
	property we obtain a coproduct map 
	\[
		s_i\cup s_j :  \Gg_i\coprod_{\phi_{ij}} \Gg_j \to U_i\cup U_j.
	\] 
	In the same way, the inverse maps on each $\Gg_i$ glue to a map $i:\Gg\to\Gg$.  
	Less obvious is the fact that the multiplication maps $m_i$ of 
	each groupoid $\Gg_i$ glue to a map 
	\[
	m:\Gg {_{s}\times}_t\Gg\to \Gg.
	\] 
	To see this, we must use the orbit cover property, as follows.
	
	Let $(h,g)\in \Gg_i\times\Gg_j$ be a representative for an arbitrary point
	in $\Gg {_{s}\times}_t\Gg$, so that $s_i(h)=t_j(g)=x$.   	
	Now, $t_i(s_i^{-1}(x))$ is the $\Gg_i$--orbit of $x$, which coincides with 
	the $A|_{U_i}$--orbit of $x$, since $\Gg_i$ is source-connected. Therefore,
	$t_i(s_i^{-1}(x))$ sits in the full $A$--orbit of $x$ which, by the orbit cover property,
	must be contained in some $U_k$, $k\in I$.  
	Therefore, we have 
	\[
		t_i(s_i^{-1}(x))\subset U_i\cap U_k,
	\]
	proving that $s_i^{-1}(x)$ coincides with the source fiber $s_{ik}^{-1}(x)$ of the 
	groupoid $\Gg_i|^c_{U_i\cap U_k}$.  But this is identified in the coproduct with
	the source fibre of $(\Gg_k|_{U_i\cap U_k})^c$ using $\phi_{ik}$.
	Therefore, in $\Gg$, the element $h$ has a representative in $\Gg_k$. 	
%
%
%
	By the same argument, since $t_j(g)=s_i(h)$, $g$ also has a representative in $\Gg_k$, 
	hence we may use the given multiplication $m_k$ on $\Gg_k$ to define $m$
	in a neighbourhood of $(h,g)$. The compatibility of the component multiplications 
	ensures that this defines $m$ unambiguously.  
	
	The argument above also shows that $\Gg$ is source-connected, 
	since the source fibre $s^{-1}(x)$ coincides with 
	the source fiber of a subgroupoid $\Gg_k$ 
	such that $U_k$ contains the $A$--orbit of $x$, and $\Gg_k$ is source-connected.

	Part \emph{ii)} follows from the universal property of the manifold coproduct, 
	together with the fact that $\varphi:\Gg\to \Gg'$ is a Lie groupoid morphism
	if and only if $\varphi\circ\iota_k$ is a Lie groupoid morphism for all $k\in K$, 
	which follows from the local definition of the groupoid structure maps of $\Gg$.
	
	For Part \emph{iii)}, we let $\Gg'\in\mathbf{Gpd}(A)$ be any 
	source-connected groupoid integrating $A$, and 
	let $\Gg$ be the fibered coproduct of $\R_{U_i}(\Gg')$ given by~\eqref{gpdglued}.  
	By Part \emph{ii)}, we obtain a morphism $\varphi:\Gg\to\Gg'$ such that 
	$\R_{U_i}(\varphi)$ is an isomorphism. But $\varphi$ must then be an isomorphism, 
	since for any $x\in M$ as above, the restriction of $\varphi$ to $s^{-1}(x)=s_k^{-1}(x)$ 
	is an isomorphism.
\end{proof} 

\begin{example}
Let $p\in S^1$ be a point on the circle, and $T_p S^1$ the Lie algebroid of 
vector fields vanishing at $p$. We may construct a Lie groupoid 
integrating this algebroid as follows.  
Express the circle as a fibered coproduct of $U=\RR$, $V=\RR$, with gluing map
$\phi:U\setminus \{0\}\to V\setminus\{0\}$ given by $x\mapsto x^{-1}$:
\[
S^1 = U\coprod_\phi V.
\]
Let $p = 0\in U$.  Then $\{U,V\}$ is an orbit cover, since the orbits are $p\in U$ and $S^{1}\setminus\{p\} = V$.  A source-connected integration over $U$ is given by the action 
groupoid $\Gg_U = \RR\ltimes U$ of $\RR$ on $U$ by rescaling, with source and target maps 
\[
\begin{aligned}
s_U&:(t,x)\mapsto x\\
t_U&:(t,x)\mapsto e^t x.
\end{aligned}
\]  
Over $V$, the algebroid $T_pS^1$ is simply the tangent bundle, so a source-connected integration is $\Pair(V)=V\times V$.  We now glue $\Gg_U$ to $\Gg_V$ via the map 
\[
(t,x)\mapsto (e^{-t} x^{-1}, x^{-1}),
\]
an isomorphism of subgroupoids from $\RR\ltimes (U\setminus\{0\})$ to $(\Pair(V\setminus 0))^c = \Pair(V_+)\times\Pair(V_-)$, where $V_\pm=\{y\in\RR ~|~ \pm y>0\}$.  The resulting groupoid is a source-connected integration of $T_p S^1$, diffeomorphic as a smooth surface to the nontrivial line bundle over $S^1$.
\end{example}

\begin{remark}\label{Remark: MfldvanKampen}
The restriction functors associated to an open cover $\{U_i\}_{i\in I}$ define a functor from $\mathbf{Gpd}(A)$ to the fiber product of the categories $\mathbf{Gpd}(A|_{U_i})$ over $\mathbf{Gpd}(A|_{U_i\cap U_j})$.  A restatement of Theorem~\ref{Thm: MfldvanKampen} is that when the open cover is an orbit cover, the fibered coproduct of \emph{manifolds} defines an inverse functor to this restriction.  Concretely, if $\{U,V\}$ is an orbit cover, then we obtain the following fibre product diagram of categories.
    \begin{equation*} 
        \begin{aligned}\xymatrix{
                \Gpd(A) \ar[r]^-{\mathbf{R}_{U}} \ar[d]_-{\mathbf{R}_{V}}		& \Gpd(A|_{U}) \ar[d]^-{\mathbf{R}_{U \cap V}}  \\
                \Gpd(A|_{V}) \ar[r]_-{\mathbf{R}_{U \cap V}}							& \Gpd(A |_{U \cap V})	 }\end{aligned}
    \end{equation*}
Since the cocycle condition is trivially satisfied in this case, we may classify integrations on $U\cup V$ by classifying integrations on $U$ and $V$ which are isomorphic along $U\cap V$ (isomorphisms between integrations are unique when they exist). 
\end{remark}
Combining Theorem~\ref{Thm: MfldvanKampen} with Theorem~\ref{Thm: Gpd-NormalSubgpd}, we obtain a version of the gluing theorem stated in terms of normal subgroupoids of local source-simply-connected integrations.
\begin{corollary}\label{posetprod}
Let $A$ be a Lie algebroid over $M$, let $\{U_i\}_{i\in I}$ be an orbit cover for $M$, 
and let $\til{\Gg}_i$, $\til{\Gg}_{ij}$ be source-simply-connected integrations 
of $A$ over $U_i$ and $U_i\cap U_j$, respectively, for all $i, j\in I$.  
The canonical morphisms $\til{\Gg}_{ij}\to \mathbf{R}_{U_i\cap U_j}(\til\Gg_i)$ 
induce morphisms $\mathbf{P}_{i}:\Lnorm(\til{\Gg_i})\to \Lnorm(\til{\Gg}_{ij})$ between 
posets of discrete, totally disconnected, normal Lie subgroupoids.

Then the category of integrations $\Gpd(A)$ is equivalent to the fibre product of posets 
$\Lnorm(\til{\Gg_i})$ over the maps $\mathbf{P}_i$, i.e., the following limit:
\[
\Gpd(A) \simeq \lim \left(
	\prod_{i\in I} \Lnorm(\til{\Gg}_i) \xrightarrow{\mathbf{P}}
	\prod_{i,j\in I} \Lnorm(\til{\Gg}_{ij})
\right).
\]
In other words, any integration is uniquely specified by a choice of discrete, totally disconnected, normal Lie subgroupoids $\Nn_i\subset \til{\Gg}_i$, for all $i\in I$, such that $\mathbf{P}_i(\Nn_i) = \mathbf{P}_j(\Nn_j)$ in $\til{\Gg}_{ij}$.
\end{corollary}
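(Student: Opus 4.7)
The plan is to reduce the corollary to Theorem \ref{Thm: MfldvanKampen} and Theorem \ref{Thm: Gpd-NormalSubgpd} by transporting the fibre product description of $\Gpd(A)$ along the local equivalences between integrations and normal subgroupoids. First, I would apply Theorem \ref{Thm: Gpd-NormalSubgpd} to each $U_i$ and each intersection $U_i\cap U_j$: since $A|_{U_i}$ admits the source-simply-connected integration $\til{\Gg}_i$, the functor $\mathbf{N}:\Gg\mapsto \ker(\til{\Gg}_i\to\Gg)$ is an equivalence of categories (in fact of posets) between $\Gpd(A|_{U_i})$ and $\Lnorm(\til{\Gg}_i)$, and similarly for the double intersections and $\til{\Gg}_{ij}$.

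Next, I would verify the key compatibility: under these equivalences, the restriction functor $\mathbf{R}_{U_i\cap U_j}:\Gpd(A|_{U_i})\to\Gpd(A|_{U_i\cap U_j})$ corresponds to the poset map $\mathbf{P}_i$. Given a normal subgroupoid $\Nn\vartriangleleft\til{\Gg}_i$ with quotient $\Gg=\til{\Gg}_i/\Nn$, the composition
\[
\xymatrix{\til{\Gg}_{ij}\ar[r]^-{q_i} & \mathbf{R}_{U_i\cap U_j}(\til{\Gg}_i)\ar[r] & \mathbf{R}_{U_i\cap U_j}(\Gg)}
\]
is the canonical ssc morphism from $\til{\Gg}_{ij}$ onto the integration $\mathbf{R}_{U_i\cap U_j}(\Gg)$, so its kernel equals $\mathbf{N}(\mathbf{R}_{U_i\cap U_j}(\Gg))$. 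On the other hand, the kernel of the second arrow is the restriction of $\Nn$ to the source-connected subgroupoid over $U_i\cap U_j$, and pulling back via $q_i$ gives precisely $\mathbf{P}_i(\Nn)$ by construction. This shows $\mathbf{N}\circ\mathbf{R}_{U_i\cap U_j}=\mathbf{P}_i\circ\mathbf{N}$; since the target is a poset, the agreement of underlying subgroupoids suffices.

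Finally, I would invoke Theorem \ref{Thm: MfldvanKampen}, cast as the fibre-product diagram of Remark \ref{Remark: MfldvanKampen}, to realize $\Gpd(A)$ as the limit over the orbit cover of the restriction diagram of categories $\Gpd(A|_{U_i})\to\Gpd(A|_{U_i\cap U_j})$. Since equivalences of categories preserve limits, the local equivalences $\Gpd(A|_{U_i})\simeq\Lnorm(\til{\Gg}_i)$ and $\Gpd(A|_{U_i\cap U_j})\simeq\Lnorm(\til{\Gg}_{ij})$, combined with the compatibility established in the previous step, transport this limit to the fibre product of posets formed by the $\mathbf{P}_i$, yielding the stated equivalence. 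The main obstacle is the compatibility check in the middle step, which amounts to unwinding the definition of $\mathbf{P}_i$ as the kernel-pullback along $q_i$ and matching it with the kernel of the restricted quotient map; once one notes that totally disconnected normal subgroupoids interact cleanly with the restriction to source-connected parts over open sets, no genuine obstruction remains, as the remaining ingredient, locally finite gluing of groupoids, is guaranteed by Theorem \ref{Thm: MfldvanKampen}.
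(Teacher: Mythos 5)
Your proposal is correct and follows essentially the same route as the paper, which simply asserts the corollary by combining Theorem~\ref{Thm: MfldvanKampen} (in the fibre-product form of Remark~\ref{Remark: MfldvanKampen}) with the local equivalences of Theorem~\ref{Thm: Gpd-NormalSubgpd}. The compatibility square $\mathbf{N}\circ\mathbf{R}_{U_i\cap U_j}=\mathbf{P}_i\circ\mathbf{N}$, which you verify via the uniqueness of the morphism out of the ssc integration, is exactly the detail left implicit in the paper.
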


\begin{prop} \label{HausGlue}
		Let $A$ be an integrable Lie algebroid over $M$, and let $\{U_i\}_{i\in I}$ be a locally finite orbit cover of $M$. If for each $i\in I$, the groupoid $\Gg_i
	\rightrightarrows U_i$ in Theorem \ref{Thm: MfldvanKampen} is Hausdorff, then the coproduct groupoid $\Gg \rra M$ is Hausdorff.
\end{prop}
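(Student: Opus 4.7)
The plan is to reduce Hausdorffness of $\Gg$ to the closedness of the graph of the equivalence relation $\sim$ defining the fibered coproduct~\eqref{gpdglued}. Since each inclusion $\Gg_k\hookrightarrow\Gg$ is an open embedding and the local finiteness of $\{U_i\}$ transfers to the induced open cover of $\Gg$ by the sets $\Gg_k$, the separation axiom for two distinct points of $\Gg$ need only be verified within some $\Gg_i\cup\Gg_j$. The cocycle relations force $x\sim y$ within a single $\Gg_i$ only when $x=y$, so the only pieces of the graph of $\sim$ requiring attention are, for each ordered pair $i\ne j$,
\[
\Gamma_{ij}=\{(x,\phi_{ij}(x)):x\in(\Gg_i|_{U_i\cap U_j})^c\}\subset \Gg_i\times\Gg_j,
\]
and it then suffices to show each $\Gamma_{ij}$ is closed.

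To this end, I would take a convergent sequence $(x_n,\phi_{ij}(x_n))\to(x,y)$ in $\Gg_i\times\Gg_j$. Because $\phi_{ij}$ is a groupoid morphism it intertwines source and target, so $s_i(x_n)=s_j(\phi_{ij}(x_n))$ and likewise for targets. Passing to the limit and using the Hausdorffness of $M$ to identify the two candidate images yields $s_i(x)=s_j(y)\in U_i\cap U_j$ and $t_i(x)=t_j(y)\in U_i\cap U_j$, hence $x\in\Gg_i|_{U_i\cap U_j}$ and $y\in\Gg_j|_{U_i\cap U_j}$. The next step is to invoke the standard fact that for any Lie groupoid with Hausdorff source fibres, the source-connected subgroupoid is clopen in the ambient groupoid, since within each Hausdorff source fibre the component of the identity arrow is clopen and varies continuously along the smooth identity section. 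Applied to the Hausdorff open subgroupoid $\Gg_i|_{U_i\cap U_j}\subset\Gg_i$, this shows $(\Gg_i|_{U_i\cap U_j})^c$ is closed therein; since $x_n\to x$ in the open set $\Gg_i|_{U_i\cap U_j}$, closedness gives $x\in(\Gg_i|_{U_i\cap U_j})^c$. Continuity of $\phi_{ij}$ together with Hausdorffness of $\Gg_j$ then force $y=\phi_{ij}(x)$, so $(x,y)\in\Gamma_{ij}$ as required.

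The main technical point is the clopenness of the source-connected subgroupoid, which is the device for transferring source-connectedness of the $x_n$ to the limit $x$; all remaining steps are routine topological manipulations exploiting Hausdorffness of $M$ and of each $\Gg_i$, together with the fact that $\phi_{ij}$ preserves the source and target maps.
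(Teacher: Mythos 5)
Your reduction of Hausdorffness to closedness of the graphs $\Gamma_{ij}$ of the gluing maps is a legitimate framework, but the step you lean on is false: the source-connected subgroupoid of a Hausdorff Lie groupoid is \emph{not} clopen in general. It is always open, but it can fail to be closed, because the identity component of the source fibre need not vary continuously in the base: a sequence of arrows lying in the identity components of their source fibres can converge to an arrow lying in a \emph{different} component of its source fibre. Concretely, take $M=\RR^2$, let $A=\ker(Tf)\subset TM$ for the submersion $f(x,y)=x$, let $\Gg$ be the fibre-product groupoid $M\times_f M$ (arrows are pairs of points on the same vertical line), and restrict to the open set $V=M\setminus\{(0,1)\}$. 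Then $\Gg|_V$ is Hausdorff with Hausdorff source fibres, the arrows $g_n=((\tfrac{1}{n},2),(\tfrac{1}{n},0))$ lie in $(\Gg|_V)^c$ and converge to $g=((0,2),(0,0))$, yet $g\notin(\Gg|_V)^c$, since $(0,2)$ and $(0,0)$ lie in different components of the punctured fibre $f^{-1}(0)\cap V$. So your conclusion that $x\in(\Gg_i|_{U_i\cap U_j})^c$ does not follow from the stated ``standard fact.''

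A telltale sign of the gap is that your proof never uses the orbit cover hypothesis, without which the proposition is simply false (the naive gluing then need not even be a groupoid). That hypothesis is exactly what is needed at the point where your argument breaks: one must control which component of a source fibre the limit arrow lands in, and this is done by observing that the full $A$-orbit of $s(x)$ --- and hence the entire source fibre of $\Gg_i$ over $s(x)$ --- already lives over a single chart $U_k$. The paper sidesteps the closed-graph formulation and argues directly with two cases: if $s(g)\neq s(h)$ or $t(g)\neq t(h)$, the arrows are separated by preimages of disjoint opens in the Hausdorff base $M$; otherwise the orbit cover property places $g$ and $h$ inside a single $\R_{U_i}(\Gg)=\Gg_i$, which is open in $\Gg$ and Hausdorff by hypothesis. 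If you wish to keep your closed-graph route, you would have to rerun essentially this orbit argument inside the proof that $\Gamma_{ij}$ is closed, using the third chart $\Gg_k$ containing the orbit of $s(x)$ together with the cocycle identity $\phi_{jk}\phi_{ij}=\phi_{ik}$ and the Hausdorffness of $\Gg_k$ to identify the limit $y$ with $\phi_{ij}(x)$; as written, the argument has a genuine hole.
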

\begin{proof}
For $g, h \in \Gg$ such that $g \neq h$, if we have $s(g) = s(h)$ and $t(g) = t(h)$, then since $\Gg$ is source-connected, there exists an orbit $\Oo$ of $A$ containing both $s(g)$ and $t(g)$. By the orbit cover property, $\Oo$ is contained in some $U_i$. It follows that $g, h \in \R_{U_i}(\Gg) = \Gg_i$. Since $\Gg_i$ is an embedded Hausdorff submanifold of $\Gg$, we have that $g$ and $h$ are separable.

If $x = s(g)$ and $y = s(h)$ are distinct, then we can find a neighbourhood $W_x$ of $x$ and a neighbourhood $W_y$ of $y$ such that $W_x \cap W_y =\varnothing$, because the base $M$ is Hausdorff. Since $s: \Gg \rightarrow M$ is a submersion, it follows that $s^{-1}(W_x) \subset \Gg$ and $s^{-1}(W_y) \subset \Gg$ are open sets such that $s^{-1}(W_x) \cap s^{-1}(W_y) = \varnothing$. Likewise, if $t(g)$ and $t(h)$ are distinct, then $g$ and $h$ are separable.
\end{proof}

	\subsection{Classification of integrations} \label{Subsection: Tangent}

	In this section we make use of Theorem~\ref{Thm: MfldvanKampen} to classify  integrations of the Lie algebroid $T_D M$ associated to a closed hypersurface $D\subset M$, as well as the lie algebroid $T^*_\pi M$ of a log symplectic structure. 

\subsubsection{Choosing an orbit cover}\label{graphorbit}

For both log tangent and log symplectic cases, we choose an orbit cover for the manifold $M$ as follows: $V$ is the complement of the closed hypersurface $D$, and $U$ is a tubular neighbourhood of $D$, chosen so that the tubular neighbourhoods of different connected components of $D$ do not intersect.

We index connected components as follows: let $\sD = \pi_0(D)$ and $\sV = \pi_0(V)$, so that 
\[
V = \coprod_{i\in \sV} V_i,\qquad
U = \coprod_{j\in \sD} U_j.
\] 
It is convenient to partition $\sD$ into two subsets, $\sD = \sE\coprod\sH$, 
where $\sE, \sH$ are the sets of connected components of $D$ with orientable and non-orientable 
normal bundles, respectively. It is also convenient, following~\cite{MR1959058}, to represent this information as a graph, as follows.

\begin{definition}\label{graphmd}
With notation as above, the \defw{graph of $(M, D)$} is the following graph with 
half-edges (a half-edge is an edge with only one end attached to a vertex).
\begin{itemize}
\item[--] The vertices, $\sV = \pi_0(V)$, index the components of the complement of $D$.  
\item[--] The edges, $\sE$, index the components of $D$ with orientable normal bundle; 
an edge $j\in \sE$ joins the pair of vertices representing the open components on either side of $D_j$ (note that these may coincide, in which case the edge becomes a loop). 
\item[--] The half-edges, $\sH$, index the components of $D$ with non-orientable normal bundle; 
a half-edge $j\in\sH$ is attached to a vertex $i\in\sV$ if $D_j\subset \overline{V_i}$.
\end{itemize}
\end{definition}

\begin{example}\label{ellcurve2}
Example~\ref{Example: EllipticCurve} concerns a hypersurface $D\subset \RR P^2$ with two connected components. We choose an orbit cover consisting of the complement $V=\RR P^2\setminus D$, with two connected components, and a tubular neighbourhood $U$ of $D$, with two connected components.  
\begin{center}
\centering\begin{tikzpicture}
\node [circle,fill=black,inner sep=1.2pt,pin={[pin distance=0.5cm,pin edge={black}]left:}] (V1) {};
\node [circle,fill=black,inner sep=1.2pt] (V0) [right =of V1] {};
\path (V0) edge [in=0,out=180] node [label=above:] {} (V1);
\end{tikzpicture}
\end{center}
The corresponding graph, shown above, has two vertices, one edge, and one half-edge, as $D$ has two connected components, one of which has nontrivial normal bundle.
\end{example}

The orbit cover described above has the property that the Lie algebroid $A$ (which is either $T_DM$ or $T^*_\pi M$)  restricts to the tangent algebroid on $V$ and $U\cap V$, i.e. $A|_V=TV$ and $A|_{U\cap V}=T(U\cap V)$.    So, the category of integrations of $A$ can be described as the following fibre product.
    \begin{equation}\label{commclass}
        \begin{aligned}
        \xymatrix{\Gpd(A) \ar[r] \ar[d]	& \Gpd(A|_U) \ar[d]^-{\mathbf{P}_U}  \\
                \Gpd(TV) \ar[r]_-{\mathbf{P}_{V}}	& \Gpd(T({U \cap V}))	 }
        \end{aligned}	
    \end{equation}
The fundamental groupoids of $V$ and $U\cap V$ provide two of the source-simply-connected integrations required to apply Corollary~\ref{posetprod}.
As described in Example~\ref{ex: tanggpd}, 	we may further simplify the bottom row 
of~\eqref{commclass} by restricting $\Pi_1(V)$ and $\Pi_1(U\cap V)$ to a set of 
basepoints for the underlying spaces, described in the next section,~\S\ref{bspt}.  This will render $\mathbf{P}_V$ into a morphism between posets of normal subgroups of the fundamental groups $\pi_1(V), \pi_1(U\cap V)$. The choice of basepoints will also be convenient for the description of $\mathbf{P}_U$ in~\S\ref{ltclass} and~\S\ref{lsclass}.

\subsubsection{Choosing basepoints}\label{bspt}

\begin{figure}[H]
\centering
\begin{tikzpicture}[>=angle 60]
\draw (0,1) .. controls (-.1,1) and (-.2,.5) .. (-.2,0)  .. controls (-.2,-0.5) and (-.1,-1) .. (0,-1);
\node[label=90:{\small $D_j$}] at (-.45,0) {};

\draw[shift={(-1,0)}, dotted] (0,1) .. controls (-.1,1) and (-.2,.5) .. (-.2,0)  .. controls (-.2,-0.5) and (-.1,-1) .. (0,-1);

\draw[shift={(+1,0)},dotted] (0,1) .. controls (-.1,1) and (-.2,.5) .. (-.2,0)  .. controls (-.2,-0.5) and (-.1,-1) .. (0,-1);

\draw (-5,1) -- (5,1);

\draw (-5,-1) -- (5,-1);

\coordinate [label=110:$x_j$] (x) at (0.4,-0.8);
\node [circle, fill=black,inner sep=1pt] at (-.14,-0.7) {};
	\coordinate [label=110:$x_{ji'}$] (x1) at (1.2,-0.8);
	\node [circle, fill=black,inner sep=1pt] at (.4,-0.7) {};
	\coordinate [label=110:$x_{ji}$] (x2) at (-.48,-0.8);
	\node [circle, fill=black,inner sep=1pt] at (-.68,-0.7) {};
\coordinate [label=right:{$y_{i'}$}] (y+) at (3,-0.7);
\node [circle, fill=black,inner sep=1pt] at (y+) {};
\coordinate [label=left:{$y_{i}$}] (y-) at (-3,-0.7);
\node [circle, fill=black,inner sep=1pt] at (y-) {};

\coordinate [label={\small $V_{i'}$}] (v+) at (3,0.2);
\coordinate [label={\small $V_{i}$}] (v-) at (-3,0.2);

\genhole{shift={(2,0)}} 
\draw[shift={(3,0)}] (0,0) node {$\cdots$};
\genhole{shift={(4,0)}}

\genhole{shift={(-2,0)}} 
\draw[shift={(-3,0)}] (0,0) node {$\cdots$};
\genhole{shift={(-4,0)}}

\draw [decorate,decoration={brace,amplitude=1ex},yshift=-.5ex]
(1,-1) -- (-1,-1) node [black,midway,yshift=-2.5ex] 
{\small $U_j$};
\end{tikzpicture}


\begin{tikzpicture}[>=angle 60]
\draw[middlearrow={>}] (0,1) .. controls (-.1,1) and (-.2,.5) .. (-.2,0)  .. controls (-.2,-0.5) and (-.1,-1) .. (0,-1);
\draw[middlearrow={<}] (0,1) .. controls (.1,1) and (.2,.5) .. (.2,0) node[right] {$D_j$}.. controls (.2,-0.5) and (.1,-1) .. (0,-1);

\draw (-5,1) -- (0,1);
\draw (-5,-1) -- (0,-1);

\coordinate [label=110:$x_j$] (x) at (-.07,-0.8);
\node [circle, fill=black,inner sep=1pt] at (-.14,-0.7) {};
\coordinate [label=110:$x_{ji}$] (x2) at (-.68,-0.8);
	\node [circle, fill=black,inner sep=1pt] at (-.78,-0.7) {};
\coordinate [label=left:{$y_i$}] (y-) at (-3,-0.7);
\node [circle, fill=black,inner sep=1pt] at (y-) {};

\coordinate [label={\small $V_i$}] (v+) at (-3,0.2);
\genhole{shift={(-2,0)}} 
\draw[shift={(-3,0)}] (0,0) node {$\cdots$};
\genhole{shift={(-4,0)}}
\draw[shift={(-1,0)}, dotted] (0,1) .. controls (-.1,1) and (-.2,.5) .. (-.2,0)  .. controls (-.2,-0.5) and (-.1,-1) .. (0,-1);
\draw [decorate,decoration={brace,amplitude=1ex},yshift=-.5ex]
(0,-1) -- (-1,-1) node [black,midway,yshift=-2.5ex] {\small $U_j$};
\end{tikzpicture}
\caption{Choice of basepoints in the cases $j\in\sE$ (above) and $j\in\sH$ (below).}
\label{basept}
\end{figure}

We make the following choice of basepoints, as illustrated in Figure~\ref{basept}:
\begin{itemize}
\item[--] For each $i\in\sV$, choose $y_i\in V_i$.
\item[--] For each $j\in \sD$, choose $x_j\in D_j$.
\item[--] For each $j\in \sE$, choose basepoints $x_{ji}, x_{ji'}$ in $V_i\cap U$ and $V_{i'}\cap U$ respectively, where $V_i, V_{i'}$  are the open components on either side of $D_j$.  Choose these in such a way that they are sent to $x_j$ by a neighbourhood retraction $r_j:U_j\to D_j$.
\item[--] For each $j\in \sH$, choose a basepoint $x_{ji}\in V_i\cap U$, where $V_i$ is the open component surrounding $D_j$.  Choose it so that it is sent to $x_j$ by a neighbourhood retraction $r_j:U_j\to D_j$. 
\end{itemize}

Once basepoints are chosen as above, we obtain a simplification of the bottom row of
the fiber product diagram~\eqref{commclass}.  Namely, we obtain equivalences
\begin{equation*}
	\begin{aligned}
		\Gpd(TV) &\simeq \prod_{i\in \sV}\Lnorm(\pi_1(V_i, y_i)),\\
		\Gpd(T(U\cap V)) &\simeq \prod_{i\in \sV,j\in \sD}\Lnorm(\pi_1(V_i\cap U_j, x_{ji})).
	\end{aligned}
\end{equation*}
With respect to this decomposition, the restriction functor $\mathbf{P}_V$ has the following simple description, by an argument as in Example~\ref{ex: tanggpd}.
\begin{prop}
The restriction functor $\mathbf{P}_V$ taking integrations of $TV$ to integrations of $T(U\cap V)$ may be described as a poset map from  normal subgroups of $\pi_1(V_i, y_i)$ to 
normal subgroups of $\pi_1(U_j\cap V_i, x_{ji})$: it is the pullback by the group homomorphism 
\[
\delta_*:\pi_1(U_j\cap V_i, x_{ji})\to \pi_1(V_i, y_i),\qquad \gamma \mapsto \delta \gamma \delta^{-1}
\] 
induced by the choice of a path $\delta$ from $y_i$ to $x_{ji}$ in $V_i$.  This map on normal subgroups $N\mapsto \delta_*^{-1}(N)$ is independent of the choice of $\delta$.
\end{prop}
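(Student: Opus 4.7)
The plan is to translate the functor $\mathbf{P}_V$ entirely into the language of fundamental groups by working with the source-simply-connected integrations on both sides. By Example~\ref{ex: tanggpd}, any source-connected integration of $TV_i$ has the form $\Pi_1(V_i)/\Nn$ for a unique discrete, totally disconnected, normal Lie subgroupoid $\Nn\subset\Pi_1(V_i)$, and $\Nn$ is in turn determined by its isotropy subgroup $N = \Nn(y_i,y_i)\subset\pi_1(V_i, y_i)$. The goal is to compute, in these terms, the corresponding normal subgroupoid $\Nn'\subset \Pi_1(U_j\cap V_i)$ describing $\R_{U_j\cap V_i}(\Pi_1(V_i)/\Nn)$, and then to read off its isotropy at $x_{ji}$.

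First, I would identify $\Nn'$ as the kernel of the canonical base-preserving morphism
\[
\iota_*:\Pi_1(U_j\cap V_i)\longrightarrow \Pi_1(V_i)/\Nn
\]
induced by the inclusion $U_j\cap V_i\hookrightarrow V_i$ followed by the quotient. Since $\Pi_1(U_j\cap V_i)$ is the source-simply-connected integration of $T(U_j\cap V_i)$, this is precisely the canonical morphism of~\eqref{surjgrp} to $\R_{U_j\cap V_i}(\Pi_1(V_i)/\Nn)$, and by Theorem~\ref{Thm: Gpd-NormalSubgpd} its kernel determines the target up to canonical isomorphism.

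Next, I would compute the isotropy of $\Nn'$ at $x_{ji}$: an element $\gamma\in\pi_1(U_j\cap V_i, x_{ji})$ lies in $\Nn'(x_{ji},x_{ji})$ exactly when its image in $\pi_1(V_i, x_{ji})$ lies in $\Nn(x_{ji}, x_{ji})$. Normality of $\Nn$ in $\Pi_1(V_i)$, applied via conjugation by the arrow $\delta\in\Pi_1(V_i)(y_i, x_{ji})$, yields the identity $\Nn(x_{ji}, x_{ji}) = \delta^{-1} N \delta$. Hence $\gamma\in \Nn'(x_{ji},x_{ji})$ iff $\delta\gamma\delta^{-1} = \delta_*(\gamma)\in N$, proving $\Nn'(x_{ji}, x_{ji}) = \delta_*^{-1}(N)$, as claimed.

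Independence of $\delta$ is the only step that is not purely bookkeeping. Any other path $\delta'$ from $y_i$ to $x_{ji}$ differs from $\delta$ by a loop $\eta = \delta^{-1}\delta' \in \pi_1(V_i, x_{ji})$, which in general does \emph{not} lie in the smaller group $\pi_1(U_j\cap V_i, x_{ji})$. A direct calculation gives $\delta'^{-1} N \delta' = \eta^{-1}(\delta^{-1} N \delta)\eta$; however, $\delta^{-1} N \delta$ is normal in $\pi_1(V_i, x_{ji})$ (being the image of the normal subgroup $N\subset\pi_1(V_i,y_i)$ under the isomorphism induced by conjugation by $\delta$), so this conjugation is trivial. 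Intersecting with $\pi_1(U_j\cap V_i, x_{ji})$ then yields $\delta_*^{-1}(N)=(\delta')_*^{-1}(N)$. The main obstacle I expect is precisely making this normality argument cleanly in the ambient group rather than in the subgroup; the preceding identifications are routine applications of Theorem~\ref{Thm: Gpd-NormalSubgpd} and Moerdijk--Mr\v{c}un's uniqueness in~\eqref{surjgrp}.
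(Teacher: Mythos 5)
Your proof is correct and follows exactly the route the paper intends: the paper gives no separate proof of this proposition, only the remark that it follows ``by an argument as in Example~\ref{ex: tanggpd}'', and your argument is precisely that argument fleshed out --- identify integrations with discrete totally disconnected normal subgroupoids of the fundamental groupoid via Theorem~\ref{Thm: Gpd-NormalSubgpd}, compute the restriction as the kernel of the canonical morphism from $\Pi_1(U_j\cap V_i)$, and use normality of $\delta^{-1}N\delta$ in $\pi_1(V_i,x_{ji})$ for independence of the path. The only cosmetic quibble is that ``intersecting with $\pi_1(U_j\cap V_i,x_{ji})$'' should read ``taking the preimage under the inclusion-induced map'', since that map need not be injective; your displayed formula $\delta_*^{-1}(N)$ already says the right thing.
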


To obtain a complete description of $\Gpd(A)$, all that remains is to describe $\Gpd(A_U)$ and $\mathbf{P}_U$ in~\eqref{commclass}.  
Since $U$ is the disjoint union of tubular neighbourhoods $U_j$ of components $D_j,\ j\in\sD$, the problem reduces to a local investigation: we need only describe the source-simply-connected groupoid integrating $A_{U_j}$  and its poset of discrete, totally disconnected normal Lie subgroupoids. 

\subsubsection{Log tangent integrations}\label{ltclass}
Fix a tubular neighbourhood $U_j$ of a single connected component $D_j$ of the hypersurface $D$, and choose basepoints $x_j$, $x_{ji}$ and, if $j\in\sE$, $x_{ji'}\in U_j\cap V_{i'}$, as described in~\S\ref{bspt}.

In Appendix~\ref{Appen: LogTang}, we construct the source-simply-connected groupoid $\til{\Gg}_{U_j}$ 
integrating $T_{D_j} U_j$, compute its poset of 
discrete, totally disconnected normal Lie subgroupoids 
(as well as the subposet of closed subgroupoids), and describe the restriction functor
$\mathbf{P}_{U_j}$.  The results of Propositions~\ref{Prop: MonthubertLocal} and~\ref{Prop: MonthubertLocalNOr} are summarized as follows.

\begin{theorem}[Local classification]\label{locA}
Let $D_j$, $U_j$, and $V_i$ be as in~\S\ref{graphorbit} and choose basepoints as in~\S\ref{bspt}.
\noindent
\begin{enumerate}
\item
If $D_j$ has orientable normal bundle, then the integrations of $T_{D_j} U_j$ are classified by triples
\begin{equation}\label{clalt}
(K_{ji}, K_j, K_{ji'}) 
\end{equation}
of normal subgroups $K_{ji}\subset \pi_1(U_j\cap V_i, x_{ji})$, $K_j\subset \pi_1(D_j, x_j)$, and $K_{ji'}\subset \pi_1(U_j\cap V_{i'}, x_{ji'})$, which are compatible with the projection $r:U_j\setminus D_j\to D_j$ of the punctured tubular neighbourhood, in the sense 
\begin{equation}\label{condi1}
K_j \subset  r_*K_{ji}\ and\ K_j\subset r_*K_{ji'}.
\end{equation}

\item If $D_j$ has non-orientable normal bundle, then the integrations of $T_{D_j} U_j$ are classified by pairs
\((K_{ji}, K_j)\) of normal subgroups as above, such that 
\begin{equation}\label{condi2}
	K_j\subset r_*K_{ji}.
\end{equation}

\item Morphisms between integrations correspond to componentwise inclusion for the associated triple (or pair) of normal subgroups.

\item Restricting the integration of $T_{D_j} U_j$ given by~\eqref{clalt} to $U_j\cap V_{i}$, we obtain the integration of $T(U_j\cap V_i)$ defined by
\begin{equation}\label{quotclas}
\left.\Pi_1(U_j\cap V_i) \right/\Nn_{ji},
\end{equation}
where $\Nn_{ji}$ is the unique totally disconnected normal Lie subgroupoid with isotropy $K_{ji}$ at $x_{ji}$.  
Similarly, in the orientable case, the restriction to $U_j\cap V_{i'}$ yields $\Pi_1(U_j\cap V_{i'})/\Nn_{ji'}$, where $\Nn_{ji'}$ is the subgroupoid with isotropy  $K_{ji'}$ at $x_{ji'}$.

\item The fundamental group of the source fibre over $x_j\in D_j$ is isomorphic to $K_j$; in particular, the source-simply-connected integration is obtained when all subgroups in the triple (or pair) are trivial. 

\item Hausdorff integrations are those for which the inclusions~\eqref{condi1}, \eqref{condi2} are equalities.

\end{enumerate}
\end{theorem}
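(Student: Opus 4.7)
The plan is to apply Theorem~\ref{Thm: Gpd-NormalSubgpd}, which identifies integrations of $T_{D_j}U_j$ with discrete, totally disconnected, normal Lie subgroupoids of the source-simply-connected (ssc) integration $\tilde{\Gg}_{U_j}$. The first step is to construct $\tilde{\Gg}_{U_j}$ and describe its isotropy. In the orientable case, a trivialisation $U_j\cong D_j\times\RR$ realises $T_{D_j}U_j$ as the direct sum of $TD_j$ with the action algebroid of $\RR$ acting on $\RR$ by rescaling, whose ssc integration is obtained by combining $\Pi_1(D_j)$ with the action groupoid $\RR\ltimes\RR$; the non-orientable case is reduced to the orientable one by passing to the orientation double cover $\hat{U}_j\to U_j$ and descending $\ZZ/2$-equivariantly. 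The orbits of $T_{D_j}U_j$ in $U_j$ are $D_j$ itself and the open components of $U_j\setminus D_j$. A direct computation then yields isotropy $\tilde{\Gg}_{U_j}(x_{ji},x_{ji})=\pi_1(V_i\cap U_j,x_{ji})$ on open orbits, and an extension
\[
0\longrightarrow \RR\longrightarrow \tilde{\Gg}_{U_j}(x_j,x_j)\longrightarrow \pi_1(D_j,x_j)\longrightarrow 0
\]
at $x_j$, split in the orientable case and twisted by the orientation character of $ND_j$ otherwise.

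By normality, any totally disconnected subgroupoid $\Nn\subset\tilde{\Gg}_{U_j}$ is determined by its isotropy at a basepoint in each orbit; discreteness forces the $x_j$-isotropy into a complement of the $\RR$-factor, giving a subgroup $K_j\subset\pi_1(D_j,x_j)$, and at $x_{ji},x_{ji'}$ we obtain $K_{ji},K_{ji'}$. The compatibility $K_j\subset r_*K_{ji}$ will follow from the requirement that $\Nn$ be a smooth embedded submanifold of $\tilde{\Gg}_{U_j}$: each connected component of $\Nn$ is a bisection of dimension $\dim U_j$, but the slice over $D_j$ has dimension only $\dim D_j$, forcing a bisection through an arrow representing $\alpha\in K_j$ to extend transversally into the open orbit $V_i\cap U_j$. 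The retraction $r$ identifies the extended arrow over $V_i\cap U_j$ with the unique lift $r_*^{-1}(\alpha)\in\pi_1(V_i\cap U_j,x_{ji})$, which must therefore lie in $K_{ji}$, establishing $K_j\subset r_*K_{ji}$. Conversely, given triples satisfying the compatibility, the isotropy data glues into a smooth subgroupoid by extending $K_j$-components along $r^{-1}$. In the non-orientable case, $r_*$ has image equal to the kernel of the orientation character on $\pi_1(D_j)$, and the compatibility automatically places $K_j$ in this index-$2$ subgroup, ensuring that $K_j$ is normal not only in $\pi_1(D_j)$ but also in the twisted full isotropy.

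The remaining parts are straightforward. For part (iii), morphisms of integrations correspond by Theorem~\ref{Thm: Gpd-NormalSubgpd} to inclusions of normal subgroupoids, which translate into componentwise inclusion of triples or pairs. For part (iv), restriction commutes with quotients, so the restriction of $\tilde{\Gg}_{U_j}/\Nn$ to $V_i\cap U_j$ equals $\Pi_1(V_i\cap U_j)/\Nn_{ji}$ with $\Nn_{ji}$ the unique totally disconnected normal subgroupoid having isotropy $K_{ji}$. For part (v), the ssc source fibre over $x_j$ is contractible, and the free right action of $\Nn(x_j,x_j)\cong K_j$ produces a covering space of fundamental group $K_j$. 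For part (vi), Theorem~\ref{Prop: LieGpdQuotient} gives Hausdorffness iff $\Nn$ is closed, and strict inclusion $K_j\subsetneq r_*K_{ji}$ yields loops in $K_{ji}$ whose limits into $D_j$ lie outside $K_j$, violating closedness, whereas equality gives closedness.

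The main obstacle is the explicit construction of $\tilde{\Gg}_{U_j}$ and its isotropy analysis in the non-orientable case, where the orientation character of $ND_j$ twists the isotropy at $x_j$ into the semidirect product $\RR\rtimes\pi_1(D_j)$ and makes $r_*$ have strict index-$2$ image; carefully tracking the $\ZZ/2$-equivariance through the blow-up and gluing arguments is where most of the technical work in the appendix will be concentrated.
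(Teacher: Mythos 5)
Your proposal is correct and follows essentially the same route as the paper's Appendix~\ref{Appen: LogTang}: build the ssc integration as the scaling action groupoid times $\Pi_1(D_j)$ (quotiented by the deck involution in the non-orientable case), invoke Theorem~\ref{Thm: Gpd-NormalSubgpd}, read off $\Nn$ from its isotropy at one basepoint per orbit, and extract the inclusion conditions from smoothness and the equalities from closedness. Two small corrections: the exclusion of the $\RR$-factor from $\Nn(x_j,x_j)$ does not follow from discreteness alone (discrete subgroups of $\RR^+$ exist) but from the requirement that $s|_{\Nn}$ be \'etale together with the fact that isotropic arrows over the dense open orbits have trivial scaling component --- which is exactly your ``bisections must extend into the open orbit'' argument, so the work is already there; and the isotropy of the ssc groupoid at $x_j$ is a direct product $\RR^+\times\pi_1(D_j,x_j)$ even when $ND_j$ is non-orientable, since scalar automorphisms of a line commute with holonomy --- the orientation character enters only through the image of $r_*$ being $\ker w_1$, which you use correctly, so neither slip affects the classification.
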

\begin{remark}\label{identik}
If the normal bundle of $D_j$ is orientable, $r_*$ is an isomorphism, so we may view the groups~\eqref{clalt} as subgroups of the same group $\pi_1(D_j, x_j)$. Consequently, condition~\eqref{condi1} is simply that the normal subgroup $K_j$ must lie in the intersection $K_{ji}\cap K_{ji'}$. For Hausdorff integrations, all three groups must coincide.  

In the non-orientable case, $r_*$ is an injection of $\pi_1(U_j\cap V_i, x_{ji})$ onto the kernel of the first Stiefel-Whitney class $w_1:\pi_1(D_j, x_j)\to \ZZ/2\ZZ$ of the normal bundle of $D_j$.  So, we may view $K_{ji}$ as a normal subgroup of $\ker w_1$, and condition~\eqref{condi2} then states that  
$K_j\subset K_{ji}$.
In the Hausdorff case, this is an equality (in particular, this implies $K_{ji}$ is normal in 
$\pi_1(D_j,x_j)$).
\end{remark}

With Theorem~\ref{locA}, we are able to fill in the diagram~\eqref{commclass} and give a global description of the category of integrations $\Gpd(T_D M)$. We will phrase the fibre product in terms of the graph introduced in~\S\ref{graphorbit}, using the basepoint choices from~\S\ref{bspt}.  

\begin{definition}\label{gragro}
The graph of groups associated to $(M,D)$ is defined as follows. Let $\Gamma$ be the graph associated to $(M,D)$ in Definition~\ref{graphmd}.  Let $\delta_{ji}$ be paths joining $y_i$ to $x_{ji}$ for all $i\in\sV, j\in\sD$. We label $\Gamma$ with groups and homomorphisms in the following way, using the identifications in Remark~\ref{identik}.  

\begin{enumerate}
\item[--] To each vertex $i\in\sV$, we associate the group $\pi_1(V_i, y_i)$.
\item[--] To each edge $j\in\sE$ joining $i$ to $i'$, we associate the group 
$\pi_1(D_j, x_j)$, together with the induced homomorphisms 
$(\delta_{ji})_*, (\delta_{ji'})_*$ from $\pi_1(D_j, x_j)$ to the corresponding
vertex groups $\pi_1(V_i, y_i)$ and $\pi_1(V_{i'}, y_{i'})$.
\begin{center}
\centering
\begin{tikzpicture}
\node (mid) at (0,1) {$\pi_1(D_j)$};
\node (left) at (-2, .4) {$\pi_1(V_i)$};
\node (right) at (2, .4) {$\pi_1(V_{i'})$};
\node [circle,fill=black,inner sep=1pt] at (2,0) {};
\node [circle,fill=black,inner sep=1pt] at (-2,0) {};
\path (mid) edge [->] (right);
\path (mid) edge [->] (left);
\path (2,0) edge [in=0,out=180] node [] {} (-2,0);
\end{tikzpicture}
\end{center}
\item[--] To each half-edge $j\in\sH$ attached to $i$, we associate the inclusion of groups $\ker w^j_1\hookrightarrow \pi_1(D_j,x_j)$ determined by the Stiefel-Whitney class $w^j_1$ of $ND_j$, together with the induced homomorphism $(\delta_{ji})_*:\ker w^j_1\to \pi_1(V_i,y_i)$. 
\begin{center}
\centering
\begin{tikzpicture}
\node (up) at (-1,1.3) {$\ker w_1^j$};
\node (mid) at (0,.4) {$\pi_1(D_j)$};
\node (left) at (-2, .4) {$\pi_1(V_i)$};
\node [circle,fill=black,inner sep=1pt] at (-2,0) {};
\path (up) edge [->] (mid);
\path (up) edge [->] (left);
\path (0,0) edge [in=0,out=180] node [] {} (-2,0);
\end{tikzpicture}
\end{center}
\end{enumerate}
\end{definition}

\begin{theorem}[Global classification]\label{classifylogtan}
Given the graph of groups associated to $(M,D)$ in Definition~\ref{gragro}, the category of integrations $\Gpd(T_D M)$ is equivalent to the poset whose elements consist of:
\begin{enumerate}
\item A normal subgroup $K_i$ of each vertex group $\pi_1(V_i, y_i), i\in\sV$,
\item A normal subgroup $K_j$ for each edge group  $\pi_1(D_j, x_j), j\in\sE$, such that 
\begin{equation}\label{edgecon}
K_j\subset \delta_{ji}^{-1}(K_i)\text{ and } K_j\subset \delta_{ji'}^{-1}(K_{i'}),
\end{equation}
where $j$ joins $i$ to $i'$, 
\item A normal subgroup $K_j$ of each half-edge group $\pi_1(D_j, x_j), j\in\sH$, such that 
\begin{equation}\label{halfedgecon}
K_j\subset \delta_{ij}^{-1}(K_i)\ \ \text{ in }\ \ \ker w_1^j,
\end{equation}
where $j$ is attached to $i$.
\end{enumerate}
The partial order is componentwise inclusion for the corresponding normal subgroups, and the fundamental group of the source fiber over any of the basepoints is given by Theorem~\ref{locA}.  In particular, the source-simply-connected integration is obtained when all subgroups over vertices, edges, and half-edges are trivial.
Finally, the Hausdorff integrations are for which the inclusions in~\ref{edgecon} and~\ref{halfedgecon} are all
equalities.
\end{theorem}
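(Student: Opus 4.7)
The plan is to apply the gluing Corollary~\ref{posetprod} to the orbit cover $\{U,V\}$ of $M$ described in~\S\ref{graphorbit}, and then to recognize the resulting fibre product of posets as the poset of normal subgroups attached to the graph of groups of Definition~\ref{gragro}. Since $U$ and $V$ are each disjoint unions of connected components, and since the source-simply-connected integrations of their tangent algebroids split as products indexed by $\sD$ and $\sV$ respectively, the fibre product factorises into pieces, one for each vertex, edge and half-edge of the graph. The main content is then a bookkeeping exercise matching the local data on $U_j$ furnished by Theorem~\ref{locA} with the restriction of the global data on $V$.

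First I would invoke Corollary~\ref{posetprod} for the orbit cover $\{U,V\}$, giving
\[
\Gpd(T_DM) \simeq \Gpd(T_DM|_U) \times_{\Gpd(T(U\cap V))} \Gpd(TV).
\]
Using the decompositions $U=\coprod_j U_j$ and $V=\coprod_i V_i$, together with Example~\ref{ex: tanggpd} applied to each $V_i$ and each $U_j\cap V_i$ with the basepoints of~\S\ref{bspt}, this becomes
\[
\Gpd(T_DM) \simeq \Big(\prod_{j\in\sD}\Gpd(T_{D_j}U_j)\Big) \times_{\prod_{i,j}\Lnorm(\pi_1(U_j\cap V_i,x_{ji}))} \prod_{i\in\sV}\Lnorm(\pi_1(V_i,y_i)).
\]
Then I would apply Theorem~\ref{locA} to identify each factor $\Gpd(T_{D_j}U_j)$ with the poset of triples (if $j\in\sE$) or pairs (if $j\in\sH$) of compatible normal subgroups, and record that the local restriction functor $\mathbf{P}_{U_j}$ of Theorem~\ref{locA}(iv) simply picks out the subgroup $K_{ji}$ (and $K_{ji'}$ when $j\in\sE$) from a triple or pair.

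Next I would compute the global restriction functor $\mathbf{P}_V$ from the vertex side: its component landing in $\Lnorm(\pi_1(U_j\cap V_i,x_{ji}))$ is the pullback $K_i \mapsto (\delta_{ji})_*^{-1}(K_i)$ induced by the chosen path $\delta_{ji}$ from $y_i$ to $x_{ji}$ in $V_i$. The fibre-product condition then forces, for every incidence of an edge or half-edge $j$ at a vertex $i$, the equality
\[
K_{ji} = (\delta_{ji})_*^{-1}(K_i)
\]
inside $\pi_1(U_j\cap V_i,x_{ji})$. Thus the normal subgroup $K_{ji}$ is determined by $K_i$, and using the identifications in Remark~\ref{identik} (the retraction $r_*$ is an isomorphism onto $\pi_1(D_j,x_j)$ in the orientable case, and an injection onto $\ker w_1^j$ in the non-orientable case), the $K_{ji}$ can be eliminated in favour of $K_i$ alone. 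The local conditions $K_j\subset r_*K_{ji}$ supplied by Theorem~\ref{locA} then translate verbatim into the edge condition~\eqref{edgecon} or half-edge condition~\eqref{halfedgecon} of the statement, whose shorthand $\delta_{ji}^{-1}(K_i)$ means exactly the subgroup obtained by pulling $K_i$ back along $\delta_{ji}$ and identifying via $r_*$.

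Finally, the statement about morphisms is componentwise because the fibre product inherits the componentwise partial order, the description of the source fibre at a basepoint is a direct consequence of the local statement Theorem~\ref{locA}(v) (since the source fibre over $x_j$ is already captured by the local model on $U_j$), and the Hausdorff characterisation follows by combining the local Hausdorff criterion from Theorem~\ref{locA}(vi) with Proposition~\ref{HausGlue}: the vertex and intersection groupoids are Hausdorff automatically, so global Hausdorffness reduces to each $U_j$ being Hausdorff, which is precisely equality in the containments~\eqref{edgecon} and~\eqref{halfedgecon}. The main obstacle I anticipate is purely notational — keeping track of the composite identification of $K_{ji}$ with a subgroup of $\pi_1(D_j,x_j)$ via both $r_*$ and the path $\delta_{ji}$, uniformly in the orientable and non-orientable cases — but no further geometric input is required beyond what has already been proved.
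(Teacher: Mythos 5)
Your proposal is correct and follows exactly the route the paper intends (the paper leaves the proof of Theorem~\ref{classifylogtan} implicit in the preceding discussion): apply Corollary~\ref{posetprod} to the orbit cover $\{U,V\}$ as in diagram~\eqref{commclass}, identify the local factors via Theorem~\ref{locA} and Example~\ref{ex: tanggpd}, use the matching condition $K_{ji}=(\delta_{ji})_*^{-1}(K_i)$ to eliminate the overlap data, and handle the Hausdorff case via Theorem~\ref{locA}(vi) together with Proposition~\ref{HausGlue} and the openness of the restrictions. No gaps.
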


\begin{example}
The log tangent integrations for Example~\ref{ellcurve2} are classified using the following graph 
of groups:
\begin{center}
\begin{tikzpicture}
	\node (mid) at (-1,.3) {$\ZZ$};
	\node (right) at (1, .3) {$0$};
	\node (up) at (-1.5,1.1) {$\ZZ$};
	\node (left) at (-2, .3) {$\ZZ$};
	\node (edge) at (0,1.3) {$\ZZ$};
	\node [circle,fill=black,inner sep=1pt] at (-1,0) {};
	\node [circle,fill=black,inner sep=1pt] at (1,0) {};
	\path (up) edge [->] node[very near start, right] {\scriptsize $\cong$} (mid);
	\path (up) edge [->] node[left] {\scriptsize $2$} (left);
	\path (edge) edge [->] node[right] {\scriptsize $\cong$} (mid);
	\path (edge) edge [->] (right);
	\path (-2,0) edge (1,0);
\end{tikzpicture}
\end{center}
There is only one nontrivial vertex group, one edge group, and one half-edge group.
We choose a subgroup $n\ZZ$ of the vertex group $\ZZ$, for some $n=0,1,\ldots$, and condition
\eqref{edgecon} forces the edge subgroup to be $n'\ZZ\subset n\ZZ$. Then on the half-edge,
we must choose a subgroup $2n''\ZZ\subset 2n\ZZ$. Integrations are therefore in bijection with the poset 
\[
\{(n, n', n'')\in \NN^3 ~:~ n|n'\text{ and } n|n''\}\cup \{(0,0,0)\}.
\]
The partial order is componentwise divisibility, and $(0,0,0)$ is the least element, corresponding 
to the source-simply-connected integration. 
For Hausdorff integrations, first we have the condition that the edge group coincides with the pullbacks 
from the left and right vertices, which are $n\ZZ$ and $\ZZ$, respectively.  This implies $n=1$ and $n'=1$.  secondly, the half-edge group must coincide with $2n\ZZ$, so we have $n'' = 1$.  Therefore,
we conclude that only one of the integrations is Hausdorff, corresponding to the point $(1,1,1)$ in the above set.  This is, of course, the log pair groupoid constructed in~\S\ref{logintegr}.
\end{example}

\subsubsection{Hausdorff log symplectic integrations}\label{lsclass}

Let $U_j$ be a tubular neighbourhood of one connected component $D_j$ of the degeneracy locus of a log symplectic manifold, and choose basepoints $x_j$, $x_{ji}$ and, if $j\in\sE$, $x_{ji'}\in U_j\cap V_{i'}$, as described in~\S\ref{bspt}.

In Appendix~\ref{Appen: LogSymp}, we construct the source-simply-connected groupoid $\til{\Gg}_{U_j}$ 
integrating $T^*_{\pi} U_j$, compute its poset of 
discrete, totally disconnected normal Lie subgroupoids 
(as well as the subposet of closed subgroupoids), and describe the restriction functor
$\mathbf{P}_{U_j}$.  The results of Propositions~\ref{Prop: LocalSympIntegrationHausdorff} and~\ref{Prop: LocalSympIntegrationNorHausdorff} are summarized as follows.

\begin{theorem}[Local classification]\label{locB}
Let $D_j$, $U_j$, and $V_i$ be as in~\S\ref{graphorbit} and choose basepoints as in~\S\ref{bspt}.
\begin{enumerate}
\item
If $ND_j$ is orientable, then the Hausdorff integrations of $T^*_{\pi} U_j$ are classified by pairs 
\begin{equation}\label{clals}
(K_{ji}, K_{ji'}) 
\end{equation}
of normal subgroups $K_{ji}\subset \pi_1(U_j\cap V_i, x_{ji})$ and $K_{ji'}\subset \pi_1(U_j\cap V_{i'}, x_{ji'})$, which are compatible with the projection $r:U_j\setminus D_j\to D_j$ of the punctured tubular neighbourhood and inclusion map $\iota_j: F_j\to D_j$, in the sense
\begin{equation}\label{condisy}
(\iota_j)_*^{-1}(r_*K_{ji}) = (\iota_j)_*^{-1}(r_*K_{ji'}),
\end{equation}
as subgroups of $\pi_1(F_j,x_j)$.

\item If $ND_j$ is non-orientable, then the Hausdorff integrations of $T^*_{\pi} U_j$ are classified by a normal subgroup \(K_{ji}\) as above, with no additional constraint.

\item Morphisms between integrations correspond to componentwise inclusion for the associated pair 
of normal subgroups.

\item The restriction of an integration of $T^*_{\pi} U_j$ given by~\eqref{clals} to $U_j\cap V_i$ is an integration of $TU_j$, obtained in the same way as in Equation~\ref{quotclas}.

\item The fundamental group of the source fibre over the point $x_j\in D_j$ is isomorphic to $(\iota_j)_*^{-1}(r_*K_{ji})$.
\end{enumerate}
\end{theorem}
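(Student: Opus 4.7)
The plan is to invoke the explicit construction of the source-simply-connected integration $\tilde{\Gg}_{U_j}$ of $T^*_\pi U_j$ (carried out in Appendix~\ref{Appen: LogSymp}) and to use Theorem~\ref{Thm: Gpd-NormalSubgpd} to identify Hausdorff integrations of $T^*_\pi U_j$ with closed, discrete, totally disconnected normal Lie subgroupoids $\Nn\vartriangleleft \tilde{\Gg}_{U_j}$. By Theorem~\ref{Thm: GMP}, it suffices to work in the linearised model of Proposition~\ref{totpois}, in which $U_j$ is a neighbourhood of the zero section of the Poisson line bundle $ND_j\to D_j = S^1_\lambda \ltimes_\varphi F_j$.

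First I would observe that the anchor map of $T^*_\pi U_j$ is an isomorphism over the open set $V_i\cap U_j$, so that the restriction of $\tilde{\Gg}_{U_j}$ there coincides with the fundamental groupoid $\Pi_1(V_i\cap U_j)$; by Example~\ref{ex: tanggpd}, the restriction $\Nn|_{V_i\cap U_j}$ is uniquely specified by a normal subgroup $K_{ji}\vartriangleleft \pi_1(V_i\cap U_j, x_{ji})$, and analogously by $K_{ji'}$ on the opposite side when $ND_j$ is orientable. Since this open locus is dense in $U_j$ and $\Nn$ is totally disconnected, $\Nn$ is fully recovered from these restrictions together with their closure over $D_j$.

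The heart of the proof is to translate closedness of $\Nn$ into the compatibility condition~\eqref{condisy}. Using the local description of $\tilde{\Gg}_{U_j}$ near $D_j$, a sequence of isotropy elements in $\Nn|_{V_i\cap U_j}$ representing a class $\gamma\in K_{ji}$ converges to an isotropy element of $\tilde{\Gg}_{U_j}$ at $x_j$ precisely when the retracted class $r_*\gamma\in \pi_1(D_j, x_j)$ lies in the image $(\iota_j)_*\pi_1(F_j, x_j)$, in which case the limit is represented by the preimage class in $\pi_1(F_j, x_j)$. Consequently, requiring that $\Nn$ be a single closed subgroupoid along $D_j$ forces the two contributions from the adjacent open leaves to coincide, yielding $(\iota_j)_*^{-1}(r_*K_{ji})=(\iota_j)_*^{-1}(r_*K_{ji'})$ in the orientable case, and imposing no constraint in the non-orientable case; this proves parts (i) and (ii). Identifying the isotropy of $\tilde{\Gg}_{U_j}/\Nn$ at $x_j$ with this common subgroup yields part (v), while parts (iii) and (iv) follow directly from Theorem~\ref{Thm: Gpd-NormalSubgpd}: morphisms correspond to inclusions of normal subgroupoids, and the restriction of $\tilde{\Gg}_{U_j}/\Nn$ to $V_i\cap U_j$ is $\Pi_1(V_i\cap U_j)/\Nn|_{V_i\cap U_j}$.

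The principal obstacle will be the convergence computation in the preceding paragraph. In the linearised model the isotropy Lie algebra of $T^*_\pi U_j$ at $x_j$ is two-dimensional, spanned by the modular direction and the conormal direction to $D_j$; one must verify that the modular factor integrates to a non-compact $\RR$-subgroup of $\tilde{\Gg}_{U_j}$ that absorbs no nontrivial limits from either $V_i\cap U_j$, while the remaining factor correctly organises the convergent sequences with $r_*\gamma\in(\iota_j)_*\pi_1(F_j, x_j)$ into elements of $\pi_1(F_j, x_j)$. This is the point at which the modular-period structure from Proposition~\ref{Prop: LogSympLineariation} and the detailed geometry of the source-simply-connected integration enter most essentially, and it is where the deferred analysis of Appendix~\ref{Appen: LogSymp} must do the real work.
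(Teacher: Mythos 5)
Your proposal is correct and follows essentially the same route as the paper: the appendix constructs the ssc integration explicitly as the action groupoid $(f^*\Aa^\pm\times_D\Mon(D,f))\ltimes N$, identifies Hausdorff integrations with closed discrete totally disconnected normal subgroupoids via Theorem~\ref{Thm: Gpd-NormalSubgpd}, and derives the compatibility condition~\eqref{condisy} exactly from the limit $r\to 0$ you describe, where the $\tfrac{\lambda}{r}\ZZ$ (modular) part of the isotropy over $N\setminus D$ escapes to infinity and only the $\pi_1(F_j,x_j)$ part contributes to the closure over $D_j$ (with the non-orientable case handled by passing to the orientation double cover and imposing $\tau$-invariance). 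The convergence computation you flag as the principal obstacle is precisely what Propositions~\ref{Prop: LocalSympIntegrationHausdorff} and~\ref{Prop: LocalSympIntegrationNorHausdorff} carry out in explicit coordinates.
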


\begin{remark}\label{identib}
For $D_j$ orientable, $r_*$ is an isomorphism, so we may view the groups~\eqref{clals} as subgroups of the same group $\pi_1(D_j, x_j)$. Condition~\eqref{condisy} is simply that their preimages in $\pi_1(F_j,x_j)$ agree.
\end{remark}


Theorem~\ref{locB} and Equation \ref{commclass} allow us to give an explicit description of the category of Hausdorff integrations $\Gpd^\Hh(T^*_\pi M)$. We will express the coproduct in terms of the graph introduced in~\S\ref{graphorbit}, using the basepoint choices from~\S\ref{bspt}.

\begin{definition}\label{gragros}
The graph of groups associated to a proper log symplectic manifold $(M,\pi)$ is defined as follows. 
Let $\Gamma$ be the graph associated to $(M,\pi)$ in Definition~\ref{graphmd}.  Let $\delta_{ji}$ be paths joining $y_i$ to $x_{ji}$ for all $i\in\sV, j\in\sD$, and let $\iota_j:F_j\to D_j$ be the inclusion of the symplectic leaf through $x_j$. We label the graph with groups and homomorphisms in the following way, using the identifications in Remark~\ref{identib}.  

\begin{enumerate}
\item[--] To each vertex $i\in\sV$, we associate the group $\pi_1(V_i, y_i)$.
\item[--] To each edge $j\in\sE$ joining $i$ to $i'$, we associate the morphism 
of groups $(\iota_j)_*:\pi_1(F_j,x_j)\to \pi_1(D_j, x_j)$, together with the induced homomorphisms 
$(\delta_{ji})_*, (\delta_{ji'})_*$ from $\pi_1(D_j, x_j)$ to the
vertex groups $\pi_1(V_i, y_i)$ and $\pi_1(V_{i'}, y_{i'})$, as below:  

\begin{center}
\centering
\begin{tikzpicture}
\node (above) at (0,1.8) {$\pi_1(F_j)$};
\node (mid) at (0,.8) {$\pi_1(D_j)$};
\node (left) at (-2, .4) {$\pi_1(V_i)$};
\node (right) at (2, .4) {$\pi_1(V_{i'})$};
\node [circle,fill=black,inner sep=1pt] at (2,0) {};
\node [circle,fill=black,inner sep=1pt] at (-2,0) {};
\path (above) edge [->] (mid);
\path (mid) edge [->] (right);
\path (mid) edge [->] (left);
\path (2,0) edge [in=0,out=180] node [] {} (-2,0);
\end{tikzpicture}
\end{center}
\item[--] To each half-edge $j\in\sH$ attached to $i\in\sV$, we associate the inclusion of groups $\ker w^j_1\hookrightarrow \pi_1(D_j,x_j)$ determined by the Stiefel-Whitney class $w^j_1$ of $ND_j$, as well as the morphism $(\iota_j)_*:\pi_1(F_j,x_j)\to \pi_1(D_j,x_j)$, and finally the induced homomorphism $(\delta_{ji})_*:\ker w^j_1\to \pi_1(V_i,y_i)$.
\begin{center}
\begin{tikzpicture}
\node (up) at (-1,1.3) {$\ker w_1^j$};
\node (mid) at (0,.4) {$\pi_1(D_j)$};
\node (left) at (-2, .4) {$\pi_1(V_i)$};
\node (right) at (1, 1.3) {$\pi_1(F_{j})$};
\node [circle,fill=black,inner sep=1pt] at (-2,0) {};
\path (up) edge [->] (mid);
\path (up) edge [->] (left);
\path (right) edge [->] (mid);
\path (0,0) edge [in=0,out=180] node [] {} (-2,0);
\end{tikzpicture}
\end{center}
\end{enumerate}
\end{definition}

\begin{remark}
For a proper log symplectic manifold $(M, \pi)$, the adjoint integration of $T^*_\pi M$ is a symplectic groupoid by Corollary \ref{SymGpdAdjoint}. It follows from Proposition \ref{prop: multisymp} that all other integrations of $T^*_\pi M$ are also symplectic groupoids.
\end{remark}

\begin{theorem}[Global classification]\label{classifylogsymp}
Given the graph of groups from Definition~\ref{gragros}, the category of Hausdorff symplectic groupoids $\Gpd^{\Hh}(T^*_\pi M)$ is equivalent to the poset whose elements consist of a family of normal subgroups $K_i$ of each vertex group $\pi_1(V_i, y_i), i\in\sV$, such that if $i, i'$ share an edge $j\in\sE$, then $K_i, K_{i'}$ coincide upon restriction to $\pi_1(F_j, x_j)$, that is,
\begin{equation*}
(\iota_j)_*^{-1}(\delta_{ji})_*^{-1} K_i = (\iota_j)_*^{-1}(\delta_{ji})_*^{-1} K_{i'}.
\end{equation*}
The partial order is componentwise inclusion for corresponding normal subgroups, and the 
fundamental group of the source fiber over $x_j, j\in\sD$, is given by the restriction
\begin{equation}\label{eq: s-pi1}
(\iota_j)_*^{-1}(\delta_{ji})_*^{-1} K_i,
\end{equation}
for $i$ attached to $j\in\sD$.  
\end{theorem}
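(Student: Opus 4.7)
The plan is to apply the gluing machinery of Corollary~\ref{posetprod} to the orbit cover $\{U,V\}$ described in~\S\ref{graphorbit}, assembling the local classification of Theorem~\ref{locB} with the classification of integrations of $TV$ and $T(U\cap V)$ recalled in Example~\ref{ex: tanggpd}. The fibre product diagram~\eqref{commclass} reduces the problem to specifying: (a) an integration of $T^*_\pi M|_U$ on each tubular piece $U_j$, (b) an integration of $TV$ on each component $V_i$, and (c) a choice of isomorphism of the resulting integrations of $T(U\cap V)$ on each overlap $U_j\cap V_i$; the cocycle condition is automatic since, by Proposition~\ref{Proposition: SubsheafLieAlgbroid}, isomorphisms between integrations are unique when they exist.

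First I would translate each ingredient into the language of normal subgroups. Choosing the basepoints of~\S\ref{bspt} and the connecting paths $\delta_{ji}$, Example~\ref{ex: tanggpd} identifies $\Gpd(TV_i)$ with $\Lnorm(\pi_1(V_i,y_i))$ via a normal subgroup $K_i$, and $\Gpd(T(U_j\cap V_i))$ with $\Lnorm(\pi_1(U_j\cap V_i,x_{ji}))$. The restriction functor $\mathbf{P}_V$ sends $K_i$ to its preimage under $(\delta_{ji})_*$, independently of the representative path. On the $U_j$ side, Theorem~\ref{locB} asserts that Hausdorff integrations are parametrized by the data $(K_{ji})$ or $(K_{ji},K_{ji'})$, and item~\emph{iv)} of that theorem says the restriction of such an integration to $U_j\cap V_i$ corresponds to the normal subgroup $K_{ji}\subset\pi_1(U_j\cap V_i,x_{ji})$. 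Matching across the overlap therefore forces the single identity
\[
K_{ji} \;=\; (\delta_{ji})_*^{-1} K_i
\]
for every incidence $i$ of $j$ in the graph.

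Next I would eliminate the edge/half-edge data. For a half-edge $j\in\sH$ incident to a unique vertex $i$, the only constraint is that $K_{ji}$ be normal in $\pi_1(U_j\cap V_i,x_{ji})$, which is automatic once $K_i$ is normal; no further compatibility remains. For an edge $j\in\sE$ joining $i$ to $i'$, the local Hausdorff condition~\eqref{condisy} now reads
\[
(\iota_j)_*^{-1}(r_* (\delta_{ji})_*^{-1} K_i) \;=\; (\iota_j)_*^{-1}(r_* (\delta_{ji'})_*^{-1} K_{i'}),
\]
which, after absorbing the isomorphism $r_*$ via Remark~\ref{identib}, becomes the condition displayed in the theorem. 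The poset structure is inherited componentwise because morphisms in $\Gpd(A)$ correspond, via Theorem~\ref{Thm: Gpd-NormalSubgpd}, to inclusions of normal subgroupoids, and restriction preserves and reflects such inclusions.

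To finish, I would address the two remaining claims. The formula~\eqref{eq: s-pi1} for the fundamental group of the source fibre over $x_j$ follows from item~\emph{v)} of Theorem~\ref{locB}, using the substitution $K_{ji}=(\delta_{ji})_*^{-1}K_i$. The Hausdorff assertion is the nontrivial direction: given compatible $(K_i)$ satisfying the edge condition, the local groupoids on $U_j$ constructed by Theorem~\ref{locB} are Hausdorff, those on $V_i$ are quotients of fundamental groupoids by normal subgroups and hence Hausdorff (Example~\ref{ex: tanggpd}), and therefore Proposition~\ref{HausGlue} guarantees that the glued groupoid is Hausdorff. Conversely, any Hausdorff global integration restricts to Hausdorff local ones, forcing~\eqref{condisy} on each edge. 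The main obstacle I anticipate is purely bookkeeping: keeping the path identifications $(\delta_{ji})_*$, the normal-bundle trivializations $r_*$, and the leaf inclusions $(\iota_j)_*$ consistent across all incidences, and verifying independence of the auxiliary choices, but no new geometric idea is required beyond the local and gluing results already established.
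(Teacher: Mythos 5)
Your proposal is correct and follows essentially the same route the paper intends: the paper states Theorem~\ref{classifylogsymp} without a written-out proof, deferring to the combination of the fibre product diagram~\eqref{commclass}, the gluing results (Theorem~\ref{Thm: MfldvanKampen}, Corollary~\ref{posetprod}, Proposition~\ref{HausGlue}), the local classification of Theorem~\ref{locB}, and Example~\ref{ex: tanggpd} -- which is precisely the assembly you carry out, including the correct elimination of the edge and half-edge data via the matching condition $K_{ji}=(\delta_{ji})_*^{-1}K_i$ and the use of Remark~\ref{identib} to absorb $r_*$. The only point worth noting is that you write the edge condition with $(\delta_{ji'})_*^{-1}K_{i'}$ on the right-hand side, which is the intended reading of the (apparently typo-afflicted) display in the theorem statement.
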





\begin{corollary}
The source-simply-connected integration of a proper log symplectic manifold is Hausdorff if and only if, for each symplectic leaf $F$ contained in the degeneracy hypersurface $D$, and for each class $\gamma\in\pi_1(F)$ on which the first Stiefel-Whitney class of $ND$ vanishes, the push-off of $\gamma$ is nonzero in the fundamental group of the adjacent open symplectic leaf or pair of leaves. 
\end{corollary}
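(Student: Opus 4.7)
The plan is to reduce the statement to the Hausdorff classification of Theorem~\ref{classifylogsymp}, applied to the trivial family of vertex subgroups $\{K_i=\{e\}\}_{i\in\sV}$.

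The crucial preliminary observation is that for the source-simply-connected integration $\Gg^\ssc$, every vertex subgroup $K_i\subset\pi_1(V_i,y_i)$ classifying the restriction $\R_{V_i}(\Gg^\ssc)$ is trivial. Indeed, since $V_i$ is the orbit of $T^*_\pi M$ through $y_i$, the target map sends the simply-connected source fiber $s^{-1}(y_i)\subset\Gg^\ssc$ surjectively onto $V_i$, so $s^{-1}(y_i)$ is the universal cover of $V_i$ and $\R_{V_i}(\Gg^\ssc)\cong\Pi_1(V_i)$; that is, $K_i=\{e\}$.

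Now assume $\Gg^\ssc$ is Hausdorff. Then by Theorem~\ref{classifylogsymp} it corresponds to the family $\{K_i=\{e\}\}_{i\in\sV}$, and the source-fiber formula in that theorem gives
\[
\pi_1\bigl(s^{-1}(x_j)\bigr)=(\iota_j)_*^{-1}(\delta_{ji})_*^{-1}\{e\}=\ker\bigl((\delta_{ji})_*\circ(\iota_j)_*\bigr).
\]
Source-simple-connectedness forces this kernel to vanish for every $j\in\sD$ and every adjacent $V_i$. For $j\in\sE$ this yields injectivity of the push-off $(\delta_{ji})_*\circ(\iota_j)_*$ into $\pi_1$ of both adjacent open leaves; for $j\in\sH$, where $(\delta_{ji})_*$ is defined only on $\ker w_1^j\subset\pi_1(D_j,x_j)$, the formula gives injectivity of the push-off restricted to $(\iota_j)_*^{-1}(\ker w_1^j)$, which is precisely the subgroup of classes in $\pi_1(F_j,x_j)$ on which $w_1(ND)$ vanishes.

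Conversely, assume the injectivity hypothesis holds at every component of $D$. Then $\{K_i=\{e\}\}$ satisfies every edge compatibility of Theorem~\ref{classifylogsymp} vacuously (both sides reduce to the trivial subgroup), and hence yields a Hausdorff integration $\Gg^\Hh$ whose source fibers over all chosen basepoints are simply-connected by the formula above. Since every symplectic leaf of $D_j$ is diffeomorphic to $F_j$ via the flow of the transverse Poisson vector field (Proposition~\ref{Prop: LogSympLineariation}) and the push-off condition is invariant under this flow, the same conclusion extends to every point of $M$, so $\Gg^\Hh$ is source-simply-connected. By uniqueness, $\Gg^\Hh\cong\Gg^\ssc$, and $\Gg^\ssc$ is therefore Hausdorff. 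The main obstacle is the identification $K_i=\{e\}$ for $\Gg^\ssc$; once in hand, the remainder is a direct reading of Theorem~\ref{classifylogsymp}, with only the bookkeeping of the $\ker w_1^j$ restriction required in the non-orientable case.
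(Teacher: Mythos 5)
Your proposal is correct and is essentially the argument the paper intends: the corollary is stated without proof as an immediate consequence of Theorem~\ref{classifylogsymp}, and your deduction — identifying $K_i=\{e\}$ for the restriction of $\Gg^{ssc}$ to each open leaf, then reading off the source-fiber fundamental group $(\iota_j)_*^{-1}(\delta_{ji})_*^{-1}\{e\}$ as the kernel of the push-off (restricted to $\ker w_1^j$ in the half-edge case) — is exactly the intended reading. The only informal step is the extension of simple-connectedness from the source fiber over the chosen basepoint $x_j$ to all leaves of $D_j$, but this is justified by the explicit local model of Appendix~\ref{Appen: LogSymp}, in which the source fibers over $(t,x,0)$ are manifestly independent of $t$.
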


\begin{example} The Hausdorff symplectic groupoids of the Poisson structure described in Example~\ref{Example: EllipticCurve} are classified using the following graph of groups:
\begin{center}
\begin{tikzpicture}
	\node (mid) at (-1,.3) {$\ZZ$};
	\node (right) at (1, .3) {$0$};
	\node (up) at (-1.5,1.1) {$\ZZ$};
	\node (left) at (-2, .3) {$\ZZ$};
	\node (F) at (-2.5, 1.1) {$0$};
	\node (top) at (0,1.5) {$0$};
	\node (edge) at (0,.7) {$\ZZ$};
	\node [circle,fill=black,inner sep=1pt] at (-1,0) {};
	\node [circle,fill=black,inner sep=1pt] at (1,0) {};
	\path (up) edge [->] node[very near start, right] {\scriptsize $\cong$} (mid);
	\path (up) edge [->] node[left,very near start] {\scriptsize $2$} (left);
	\path (top) edge [->] (edge);
	\path (edge) edge [->] node[below, near start] {\scriptsize $\cong$} (mid);
	\path (edge) edge [->] (right);
	\path (F) edge [->] (left);
	\path (-2,0) edge (1,0);
\end{tikzpicture}
\end{center}
For any choice of subgroup $n\ZZ\subset \ZZ$ of the only nontrivial vertex group, the conditions of Theorem~\ref{classifylogsymp} are trivially satisfied.  Hence, the integrations are classified by the poset $\NN\cup \{0\}$, 
where the partial order is divisibility and $0$ is the minimum.  Applying~\eqref{eq: s-pi1} to the diagram above, we see that the source fiber over $x\in D$ has trivial fundamental group for any choice of $n$.  Hence $0$ represents the source-simply-connected integration. 
 
A similar argument may be used to construct the Hausdorff source-simply-connected integration of any log symplectic 2-manifold, whose existence was shown in~\cite{MR2592728}. 
\end{example}

\begin{example}
Let $(M,\pi)$ be the log symplectic 4-manifold constructed as a $\ZZ^2$ quotient 
of $\RR^2\times T^2$, equipped with the Poisson structure 
\[
\left(\sin(2\pi y)\del{x}\wedge\del{y}\right) \oplus \omega^{-1}, 
\] 
where $\omega$ is the standard symplectic form on $T^2$. The 
action is given by 
\[
(n_1, n_2):(x,y,p)\mapsto (x+n_1, y+n_2, \varphi^{n_2}(p)),
\]
for some fixed $\varphi\in \mathrm{SL}(2,\ZZ)$. 
The degeneracy locus of $(M, \pi)$ is the union of two mapping tori $D_1, D_2$, each isomorphic to $S^1 \ltimes_\varphi T^2$. The open symplectic leaves $V_1, V_2$ are each homotopic to $S^1 \ltimes_\varphi T^2$. All of these have fundamental group $\ZZ \ltimes_{\varphi_*} \ZZ^2$. A symplectic leaf $F_j\subset D_j$ is isomorphic to $(T^2, \omega)$ and its fundamental group $\pi_1(F_j, *) = \ZZ^2$ is a normal subgroup of $\ZZ \ltimes_{\varphi_*} \ZZ^2$.
\begin{center}
\begin{tikzpicture}[node distance=1.5cm]
\node [circle,fill=black,inner sep=1pt,label=right:$V_2$] (V2) {};
\node [circle,fill=black,inner sep=1pt,label=left:$V_1$] (V1) [left =of V2] {};

\path (V1) edge [in=120,out=60] node [label=above:$D_1$] {} (V2);
\path (V1) edge [in=240,out=300] node [label=below:$D_2$] {} (V2); 
\end{tikzpicture}
\end{center}
The graph of $(M, \pi)$ is shown above. By Theorem~\ref{classifylogsymp}, each of its Hausdorff symplectic groupoids is given by a pair of normal subgroups $N_1, N_2$ of $\ZZ \ltimes_{\phi_*} \ZZ^2$ such that $N_1 \cap (\ZZ\times\ZZ) = N_2 \cap (\ZZ\times\ZZ)$.  In particular, taking both $N_1, N_2$ to be trivial,~\eqref{eq: s-pi1} yields a trivial fundamental group for the source fibers over $D_1, D_2$, so that we obtain a Hausdorff  source-simply-connected symplectic groupoid.
\end{example}

\appendix

\section{Local normal forms} \label{Appen: Local}

In this appendix, we classify the integrations of the log tangent algebroid $T_DM$ 
and the log symplectic Lie algebroid $T^*_\pi M$, in a tubular neighbourhood of a single
connected component of the hypersurface $D\subset M$ along which the anchor map drops rank.  

This classification is achieved, following Theorem~\ref{Thm: Gpd-NormalSubgpd}, by first constructing the source-simply-connected 
integration, and then classifying its possible discrete, totally disconnected normal Lie subgroupoids. 

We must also take care to describe the restriction of the resulting integrations to the 
punctured tubular neighbourhood, so that the local classification can be used in
Theorem~\ref{Thm: MfldvanKampen} for gluing.

	\subsection{Log tangent case} \label{Appen: LogTang}

	Let $D$ be a connected manifold and $p: N \rightarrow D$ a real line bundle.  We may describe 
the source-simply-connected integration $\Gg$ of $T_D(\tot(N))$ as follows.  The restriction of $T_D(\tot(N))$ to $D\subset \tot(N)$ is the Atiyah algebroid $\At(N)$ of the bundle $N$, which has source-simply-connected integration given by the Holonomy groupoid $\Hol(N)\rra D$, defined by 
\[
\Hol(N) = \{(\gamma, a) ~|~ \gamma\in\Pi_1 D, ~ a: N_{s_0(\gamma)} \stackrel{\cong}{\longrightarrow} N_{t_0(\gamma)}\}.
\]
Moreover, $\Hol(N)$ acts on $N$ and the action groupoid
\begin{equation} \label{eq: ssclogtangpd}
\Gg = \Hol^c(N) \ltimes N\rra \tot(N)
\end{equation}
is the ssc integration of $T_D (\tot N)$.
We divide the subsequent argument into two halves, as $N$ may be orientable or not. We also use $N$ to denote $\tot(N)$, when convenient.
\begin{prop} \label{Prop: MonthubertLocal}
Let $p: N \rightarrow D$ be orientable, and let $x \in D \subset N$ be a basepoint. We denote the two connected components of $N\setminus D$ by $N^+$ and $N^-$, and choose base points $x^\pm \in N^\pm$ such that $p(x^\pm) = x$. We also define $r=p|_{N\setminus D}$.
\begin{enumerate}
\item The integrations of $T_D N$ are classified by triples
\begin{equation} \label{eq: logTang3}
(K^+, K, K^-)
\end{equation}
of normal subgroups $K^+ \subset \pi_1(N^+, x^+)$, $K \subset \pi_1(D, x)$, and $K^- \subset \pi_1(N^-, x^-)$ such that
\[
K \subset r_*K^+ ~\text{ and }~ K \subset r_*K^-.
\]
\item Hausdorff integrations are those triples such that $K = r_*K^+ = r_*K^-$.
\item Morphisms between integrations correspond to componentwise inclusion for the associated triple of normal subgroups.
\item The fundamental group of the source fiber at $x$ is isomorphic to $K$.
\item Restricting the integration of $T_D N$ to $N^\pm$, we obtain the integration of $T(N^\pm)$ given by 
\[
\Pi_1(N^\pm) / \Nn^\pm,
\]
where $\Nn^\pm$ is the unique totally disconnected normal Lie subgroupoid of $\Pi_1(N^\pm)$ with isotropy $K^\pm$ at $x^\pm$. 
\end{enumerate}
\end{prop}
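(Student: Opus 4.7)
My plan is to apply Theorem~\ref{Thm: Gpd-NormalSubgpd}, which identifies integrations of $T_D N$ with discrete, totally disconnected, normal Lie subgroupoids $\Nn$ of the source-simply-connected integration $\Gg$ in~\eqref{eq: ssclogtangpd}, the Hausdorff ones being those for which $\Nn$ is closed. The first step is to describe $\Gg$ explicitly: orientability of $N$ permits a trivialization $N \cong D \times \RR$ under which $\Hol^c(N) \cong \Pi_1(D) \times \RR_{>0}$ and $\Gg$ is the action groupoid of this on $D \times \RR$, with $(\gamma, \lambda)$ sending $(s_0\gamma, t)$ to $(t_0\gamma, \lambda t)$. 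The orbits on $N$ are then exactly $D$, $N^+$, $N^-$, with isotropy groups $\Gg_x \cong \pi_1(D, x) \times \RR_{>0}$ and $\Gg_{x^\pm} \cong \pi_1(N^\pm, x^\pm)$, the latter identified with $\pi_1(D, x)$ via $r_*$.

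Next I claim that such a subgroupoid $\Nn$ is uniquely determined by the triple $(K^+, K, K^-)$ of its isotropy subgroups at the chosen basepoints. The groups $K^\pm := \Nn \cap \Gg_{x^\pm}$ are automatically normal in $\pi_1(N^\pm, x^\pm)$, while $K_x := \Nn \cap \Gg_x$ is a discrete normal subgroup of $\pi_1(D, x) \times \RR_{>0}$; since the identity component of the latter is $\{e\} \times \RR_{>0}$, discreteness forces the projection $K_x \to \pi_1(D, x)$ to be injective, so $K_x$ is the graph of a homomorphism $\phi\colon K \to \RR_{>0}$ from a normal subgroup $K \subset \pi_1(D, x)$. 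The principal obstacle is the smoothness constraint across the stratum $D$, where the dimension of the isotropy jumps. Since the source map restricts to $\Nn$ as a local diffeomorphism onto $M$, any element $(\kappa, \phi(\kappa), (x, 0)) \in \Nn$ extends uniquely to a local section $\sigma$ of $s$; for $t > 0$, being in isotropy at $(x, t)$ forces the scaling coordinate of $\sigma(x, t)$ to be $1$, and continuity as $t \to 0^+$ then forces $\phi(\kappa) = 1$, so that $K_x = K \times \{1\}$. Moreover, the extended section supplies elements $(\kappa, 1) \in \Nn_{(x, t)}$ for $t > 0$, and translating to $x^+$ along a path in $N^+$ gives $\kappa \in r_*K^+$, whence $K \subset r_*K^+$; the analogous argument on the negative side yields $K \subset r_*K^-$.

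For the converse, any triple satisfying these constraints assembles into a $\Nn$ by taking the subgroupoids of $\Pi_1(N^\pm)$ determined by $K^\pm$ and the subgroupoid of $\Hol^c(N)|_D$ determined by $K$; the constraints are exactly what is needed to glue these into a smooth embedded submanifold of $\Gg$ by the local model above. The Hausdorff refinement follows from testing closedness along the stratum: a sequence $(\kappa, 1, (x, t_n))$ in $\Nn|_{N^+}$ with $t_n \to 0^+$ limits to $(\kappa, 1, (x, 0)) \in \Gg_x$, which lies in $\Nn$ iff $\kappa \in K$, so closedness forces $r_*K^+ \subset K$ and likewise $r_*K^- \subset K$, yielding the equalities, and this condition is easily checked to be sufficient. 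The remaining items are formal consequences: morphisms correspond to componentwise inclusion of triples by functoriality of Theorem~\ref{Thm: Gpd-NormalSubgpd}; the fundamental group of the source fiber of $\Gg/\Nn$ at $x$ is $K_x \cong K$, since it is a free quotient of the simply connected ssc source fiber by $K_x$; and the restriction of $\Gg/\Nn$ to $N^\pm$ yields $\Pi_1(N^\pm)/\Nn^\pm$, where $\Nn^\pm$ is the unique totally disconnected normal Lie subgroupoid of $\Pi_1(N^\pm)$ with isotropy $K^\pm$ at $x^\pm$.
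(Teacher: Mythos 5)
Your proposal is correct and takes essentially the same approach as the paper: describe the ssc integration explicitly via a trivialization $N\cong D\times\RR$, classify the discrete totally disconnected normal Lie subgroupoids $\Nn$ by their isotropy on the three orbits $N^+, D, N^-$, and extract the smoothness condition $K\subset r_*K^\pm$ and the closedness condition $K = r_*K^\pm$. One intermediate claim is wrong as stated: discreteness of $K_x\subset \pi_1(D,x)\times\RR_{>0}$ does \emph{not} force the projection to $\pi_1(D,x)$ to be injective (e.g.\ $\{e\}\times 2^{\ZZ}$ is discrete), so $K_x$ need not a priori be the graph of a homomorphism. This is harmless, however, because your subsequent argument --- extending an arbitrary element of $\Nn$ over $(x,0)$ to a local section of $s|_{\Nn}$, noting that total disconnectedness forces the scaling coordinate to equal $1$ over $t\neq 0$, and passing to the limit --- applies to every element of $K_x$ and directly yields $K_x = K\times\{1\}$ without the graph hypothesis; it also plays the role of the paper's (terser) non-manifold argument for ruling out elements with $a\neq 1$ over $D$.
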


\begin{proof}
Using a trivialization $\tot(N) \cong \RR \times D$ with $N^+ = \RR^+ \times D$, $N^- = \RR^- \times D$, we express the ssc integration $\Gg \rra N$ as in \eqref{eq: ssclogtangpd} explicitly,
\[
\Gg = (\RR^+ \ltimes \RR) \times \Pi_1(D).
\]
By Theorem \ref{Thm: Gpd-NormalSubgpd}, the integrations of $T_D N$ are classified by the discrete, totally disconnected, normal Lie subgroupoids of $\Gg \rra N$, with the Hausdorff integrations requiring the normal subgroupoids be closed.

Let $\Nn$ be a closed discrete, totally disconnected, normal Lie subgroupoid of $\Gg$. Since $\Gg|_{N^+} \cong \Pi_1 (N^+)$, its isotropy $K^+$ at $x^+$ is a normal subgroup of $\pi_1(N^+, x^+)$. Explicitly, we have $\Nn|_{N^+} = \{1\} \times \RR^+ \times \Kk^+$ where $\Kk^+$ is the normal subgroupoid of $\Pi_1 D$ induced by $r_*K^+$. Likewise, we have $\Nn|_{N^-} = \{1\} \times \RR^- \times \Kk^-$ where $\Kk^-$ is the normal subgroupoid of $\Pi_1 D$ induced by $r_*K^-$.

We have $\Nn|_D = \{1\} \times \{0\} \times \Kk$ where $\Kk$ is the normal subgroupoid of $\Pi_1 D$ induced by $K$. Indeed, if 
$\Nn$ contains a point $p = (a, 0, \gamma) \in \Gg$ where $a \neq 1$, then $\Nn$ contains both $\RR^+ \times \{0\} \times \id(D)$ and the identity bisection $\id(N) = \{1\} \times \RR \times \id(D)$, the union of which is not a manifold.

To summarize, we have three normal subgroups $r_*K^-$, $K$ and $r_*K^+$ of $\pi_1(D, x)$ inducing three normal subgroupoids $\Kk^-$, $\Kk$ and $\Kk^+$ of $\Pi_1 D$ respectively; and
\begin{equation}
\Nn = \left(\{1\} \times \RR^- \times \Kk^-\right) \coprod \left(\{1\} \times \{0\} \times \Kk\right) \coprod \left(\{1\} \times \RR^+ \times \Kk^+\right)
\end{equation}
which is a regular submanifold $\Gg$ if and only if $K \subset r_*K^-$ and $K \subset r_*K^+$, obtaining \textit{i)}. Moreover, $\Nn$ is a closed submanifold if and only if $K = r_*K^- = r_*K^+$, obtaining \textit{ii)}. The results \textit{iii)}, \textit{iv)} and \textit{v)} follow from the construction.
\end{proof}

\begin{prop} \label{Prop: MonthubertLocalNOr}

Let $p: N \rightarrow D$ be non-orientable, and choose base points $x \in D$ and $x' \in N\setminus D$ such that $p(x') = x$.  Also, let $r=p|_{N\setminus D}$.
\begin{enumerate}
\item The integrations of $T_D N$ are classified by pairs
\begin{equation} \label{eq: logTang2}
(K', K)
\end{equation}
of normal subgroups $K' \subset \pi_1(N\setminus D, x')$ and $K \subset \pi_1(D, x)$ such that $K \subset r_*K'$.
\item Hausdorff integrations are those pairs such that $K = r_*K'$.
\item Morphisms between integrations correspond to componentwise inclusion for the associated pair of normal subgroups.
\item The fundamental group of the source fiber at $x$ is isomorphic to $K$.
\item Restricting the integration of $T_D N$ to $N\setminus D$, we obtain the integration of $T(N\setminus D)$ given by 
\[
\Pi_1(N\setminus D) / \Nn',
\]
where $\Nn'$ is the unique totally disconnected normal Lie subgroupoid of $\Pi_1(N\setminus D)$ with isotropy $K'$ at $x'$.
\end{enumerate}

\end{prop}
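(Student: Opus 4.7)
The plan is to mirror the proof of Proposition~\ref{Prop: MonthubertLocal}, reducing to the orientable case by passing to the orientation double cover. Let $q:\til D\to D$ denote the connected double cover classified by $w_1(N)\in H^1(D,\ZZ_2)$; the pullback $\til N = q^*N$ is then orientable and trivializes as $\til D\times\RR$, and the deck transformation $\sigma$ of $q$ lifts to a free involution on $\til N$ given by $(\til d,t)\mapsto(\sigma(\til d),-t)$, whose quotient is $N$.

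I would first construct the ssc integration of $T_D N$ by descent. By Proposition~\ref{Prop: MonthubertLocal} applied to $\til N\to\til D$, the ssc integration of $T_{\til D}\til N$ is $\til\Gg = (\RR^+\ltimes\RR)\times\Pi_1(\til D)$. The involution lifts to a free groupoid automorphism acting as $((a,x),\gamma)\mapsto((a,-x),\sigma_*\gamma)$, and since it acts freely on the simply connected source fibres, the quotient $\Gg = \til\Gg/\sigma$ is a smooth source-simply-connected Lie groupoid whose Lie algebroid is $T_{\til D}\til N/\sigma = T_D N$.

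Next I would classify discrete, totally disconnected, normal subgroupoids of $\Gg$ via those of $\til\Gg$. Because $\til\Gg\to\Gg$ is a Galois $\ZZ_2$-cover, such subgroupoids of $\Gg$ correspond bijectively to $\sigma$-invariant such subgroupoids of $\til\Gg$. By Proposition~\ref{Prop: MonthubertLocal} the latter are triples $(\til K^+,\til K,\til K^-)$ with $\til K\subset r_*\til K^\pm$. Since $\sigma$ exchanges $\til N^+$ and $\til N^-$, invariance forces $\til K^- = \sigma_*\til K^+$, so the triple is determined by a pair $(\til K^+,\til K)$ with $\til K$ invariant under $\sigma_*$, and the constraint on the $\til K^-$-side becomes automatic. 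The projection $\til N^+\hookrightarrow \til N\setminus\til D\to N\setminus D$ is a homeomorphism, so $\til K^+$ is exactly a normal subgroup $K'\subset\pi_1(N\setminus D,x')$.

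Finally, the dictionary between $\sigma$-invariant normal subgroups $\til K\subset\pi_1(\til D)$ and normal subgroups $K\subset\pi_1(D)$ is that, because $\pi_1(\til D) = \ker w_1$ has index two in $\pi_1(D)$ and $\sigma_*$ is implemented by conjugation by any element of $\pi_1(D)\setminus\ker w_1$, $\sigma$-invariance together with normality in $\pi_1(\til D)$ is equivalent to normality in $\pi_1(D)$; the required inclusion $K\subset\ker w_1$ is implied by $K\subset r_*K'$. The orientable constraint $\til K\subset r_*\til K^+$ then reads $K\subset r_*K'$, proving \textit{i)}, and the closedness criterion $\til K = r_*\til K^+$ descends to $K=r_*K'$, proving \textit{ii)}. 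Parts \textit{iii)}, \textit{iv)}, \textit{v)} follow by directly descending the corresponding statements of Proposition~\ref{Prop: MonthubertLocal} through the quotient. The main subtlety to watch is that the two constraints from $\til K^\pm$ in the orientable triple really do collapse to a single constraint in the non-orientable setting, which hinges on the $\sigma$-equivariance argument above.
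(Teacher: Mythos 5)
Your proposal is correct and follows essentially the same route as the paper: pass to the orientation double cover, apply Proposition~\ref{Prop: MonthubertLocal} there, and identify integrations downstairs with $\tau$-invariant discrete, totally disconnected normal subgroupoids upstairs. You are in fact slightly more explicit than the paper on two points it leaves implicit — that the second constraint $\til K\subset \til r_*\til K^-$ becomes automatic, and that $\sigma$-invariance plus normality in $\ker w_1$ is exactly normality in $\pi_1(D)$ — but the argument is the same.
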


\begin{proof}
The line bundle $N$ determines a double cover $\varsigma: \til{D} \rightarrow D$ and $\til{N} = \varsigma^* N$ is a trivial line bundle, which admits an involution $\tau$ such that $\tot(\til{N}) / \tau = \tot(N)$. Note that $\tau$ induces an involution on the ssc integration $\til{G} \rra \til{N}$ of $T_{\til{D}} \til{N}$. The ssc integration of $T_D N$ is the quotient
\[
(\til{\Gg} \rra \til{N}) / \tau.
\]
We apply Proposition \ref{Prop: MonthubertLocal} to $T_{\til{D}} \til{N}$ and seek the $\tau$-invariant, discrete, totally disconnected, normal subgroupoids. Using the same notation as in Proposition \ref{Prop: MonthubertLocal}, these subgroupoids are classified by triples $(\til{K}^+, \til{K}, \til{K}^-)$ of normal subgroups such that $\til{K} \subset \til{r}_*\til{K}^+$ and $\til{K} \subset \til{r}_*\til{K}^-$, and $\tau$-invariance imposes $\til{K}^+ = \til{K}^-$. If we identify $\pi_1(\til{D}, \til{x})$ with $\pi_1(N\setminus D, x')$, define $K' = \til{r}_*\til{K}^+$ and $K = r_*\til{K}$, then $\til{K} \subset \til{r}_*\til{K}^+$ implies $K \subset r_*K'$ obtaining \textit{i)}. On the other hand, the closed condition $\til{K} = \til{r}_*\til{K}^+$ implies $K = r_*K'$ obtaining \textit{ii)}. The results \textit{iii)}, \textit{iv)} and \textit{v)} follow from the construction.

\end{proof}

	\subsection{Log symplectic case} \label{Appen: LogSymp}

		Following Proposition \ref{Prop: LogSympLineariation}, the local normal form of a proper log symplectic structure near a connected component of its degeneracy locus is built from the following data:
	\begin{itemize}
		\item[--] A compact, connected, symplectic manifold $(F, \omega)$, a symplectomorphism $\varphi: F \rightarrow F$, and a constant $\lambda\in \RR^+$, which determine the Poisson mapping torus 
		\[
		D = S^1_\lambda \ltimes_\varphi F
		\] 
		as defined in~\eqref{maptor}, with projection $f:D\to S^1_\lambda$;
		\item[--] A line bundle $L$ over $D$ induced by an $\ZZ$-equivariant line bundle over $F$ with a metric connection $\nabla$;
		\item[--] An orientable or non-orientable line bundle over $S^1_\lambda$ which we call $Q^+, Q^-$, respectively.
	\end{itemize}
Then the total space of $N = f^*Q^\pm\otimes L$ inherits a log symplectic structure $\pi$, as explained in Proposition~\ref{totpois}.
We construct the ssc symplectic groupoid of $(\tot(N), \pi)$ as an action groupoid of the fiber product of two groupoids.

The first groupoid is the monodromy groupoid, obtained by lifting $\varphi: F \rightarrow F$ to $\varphi: \Pi_1 F \rightarrow \Pi_1 F$:
	\[
		\Mon(D,f) = S^1_\lambda \ltimes_\varphi \Pi_1 F = \frac{\Pi_1 F \times \RR}{(\gamma,t)\sim (\varphi(\gamma), t + \lambda)}.
	\]
This is a Lie groupoid over $D$, and using the metric connection $\nabla$ on $L$, we obtain an action of $\Mon(D,f)$ on $L$.

In the case that the orientable line bundle $Q^+$ is chosen,  the second groupoid, $\Aa^+$, is defined to be the trivial bundle of groups $\Aa^+ = A \times S^1_{\lambda}$, where $A = \RR^+ \ltimes \RR$ is the group of affine transformations of the plane. Using a trivialization $Q^+ = S^1_\lambda \times \RR$ with coordinates $(t, r)$, we obtain an action of $\Aa^+$ on $Q^+$ via
\begin{equation}\label{aact}
\begin{pmatrix}t\\r\end{pmatrix}\mapsto \begin{pmatrix} 1 & b \\ 0 & a \end{pmatrix}\begin{pmatrix}t\\r\end{pmatrix}, \text{ for } \begin{pmatrix} 1 & b \\ 0 & a \end{pmatrix} \in A.
\end{equation}
In the case that $Q^-$ is chosen, we define a groupoid $\Aa^-$ by taking the quotient of $A \times S^1_{2\lambda}$ by the involution $\sigma$ defined by
	\[
	\begin{aligned}
		 A \times S^1_{2\lambda} &\xrightarrow{\sigma} A \times S^1_{2\lambda}, \\
		 \left(\begin{pmatrix} 1 & b \\ 0 & a \end{pmatrix}, t \right) & \mapsto \left(\begin{pmatrix} 1 & -b \\ 0 & a \end{pmatrix}, t +\lambda \right).
	\end{aligned}
	\]
Then $\Aa^- = (A \times S^1_{2\lambda}) / \sigma$ is a nontrivial bundle of groups over $S^1_\lambda$.  By expressing $Q^-$ as the quotient $(t,r)\sim (t+\lambda, -r)$, it 
inherits an $\Aa^-$--action as in~\eqref{aact}. 

Having the groupoid $\Mon(D,f)$ over $D$, and pulling back $\Aa^\pm$ to a groupoid over $D$, we form the fiber product groupoid
\[
\Hh = f^*\Aa^\pm  \times_{D} \Mon(D, f),
\]
obtaining a Lie groupoid over $D$ which acts on $N = f^*Q^\pm \otimes L$ by combining the action of $\Mon(D, f)$ on $L$ and the action of $\Aa^\pm$ on $Q^\pm$. Finally, the action groupoid
	\begin{equation} \label{eq: ssclogsympgpd}
		\Gg = \Hh \ltimes N
	\end{equation}
is the source-simply-connected integration of the Poisson algebroid $T^*_\pi(\tot(N))$. 


\begin{prop} \label{Prop: LocalSympIntegrationHausdorff}
	Let $p: N \rightarrow D$, as defined above, be orientable, let $x\in D$ be a basepoint on the zero section, and let $\iota: F \hookrightarrow D$ be the inclusion of a symplectic leaf through $x$. We denote the two connected components of $N\setminus D$ by $N^+$ and $N^-$,
	and choose base points $x^\pm \in N^\pm$ such that $p(x^\pm) = x$. Also, we let 
	$r = p|_{N\setminus D}$.
	\begin{enumerate}
		\item The Hausdorff integrations of $T^*_\pi N$ are classified by pairs
			\begin{equation} \label{eq: logSymp2}
				(K^+, K^-)
			\end{equation}
		of normal subgroups $K^+ \subset \pi_1(N^+, x^+)$, and $K^- \subset \pi_1(N^-, x^-)$ such that 
			\[
				\iota_*^{-1}(r_*K^+) = \iota_*^{-1}(r_*K^-).
			\]
		\item Morphisms between integrations correspond to componentwise inclusion for the associated pair of normal subgroups.
		\item The fundamental group of the source fiber at $x$ is isomorphic to $\iota_*^{-1}(r_*K^+)$.
		\item 
		Restricting the integration of $T^*_\pi N$ to $N^\pm$ and $N^\pm$, we obtain the integration of $T(N^\pm)$
\[
\Pi_1(N^\pm) / \Nn^\pm
\]
where $\Nn^\pm$ is the unique totally disconnected normal Lie subgroupoid of $\Pi_1(N^\pm)$ with isotropy $K^\pm$ at $x^\pm$.
	\end{enumerate}
\end{prop}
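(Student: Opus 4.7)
The plan is to mimic the proof of Proposition~\ref{Prop: MonthubertLocal}, now working with the explicit model~\eqref{eq: ssclogsympgpd} of the ssc integration $\Gg = \Hh \ltimes N$. By Theorem~\ref{Thm: Gpd-NormalSubgpd}, Hausdorff integrations of $T^*_\pi N$ correspond bijectively to closed, discrete, totally disconnected, normal Lie subgroupoids $\Nn$ of $\Gg$; it suffices to classify all such $\Nn$. Since $\pi$ is symplectic on each open set $N^\pm$, the restriction $\Gg|_{N^\pm}$ integrates the tangent algebroid of $N^\pm$, and source-simply-connectedness forces $\Gg|_{N^\pm} \cong \Pi_1(N^\pm)$. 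Exactly as in Example~\ref{ex: tanggpd}, the restriction $\Nn|_{N^\pm}$ is then determined by a normal subgroup $K^\pm \subset \pi_1(N^\pm, x^\pm)$.

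The heart of the proof is the analysis of $\Nn|_D$ and its compatibility with $K^\pm$. Using~\eqref{eq: ssclogsympgpd}, one computes that the isotropy of $\Gg$ at $x \in D$ is $A \times \pi_1(F_x, x)$, where $F_x$ is the symplectic leaf through $x$: the group $A$ acts by affine transformations fixing the zero section of $Q^+$, and $\Mon(D,f)$ acts on $L$ via the metric holonomy of $\nabla$, fixing $0 \in L_x$. Since $A$ is connected, any discrete subgroup at $x$ that avoids the identity bisection must be contained in $\{1\} \times \pi_1(F_x, x)$, and by continuity over $D$ the subgroupoid $\Nn|_D$ is determined by a single normal subgroup $K \subset \pi_1(F,x)$.

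The closedness condition linking $K^\pm$ to $K$ is obtained by a limiting argument. A representative path $\gamma$ for a class in $K^\pm$ in $N^\pm$ projects via $r$ to a loop $r\gamma$ based at $x$; as we scale the $r$-coordinate to zero the corresponding element of $\Gg$ must have a limit in $\Nn|_D \subset \{1\} \times \pi_1(F,x)$, and such a limit exists in the isotropy fiber over $x$ only when $r\gamma$ lies in the image $\iota_*\pi_1(F,x) \subset \pi_1(D,x)$. This forces $\iota_*^{-1}(r_*K^\pm) = K$, producing the compatibility in \textit{i)}. Conversely, given $K^\pm$ with $\iota_*^{-1}(r_*K^+) = \iota_*^{-1}(r_*K^-)$, one sets $K$ equal to this common subgroup and assembles $\Nn$ piecewise over $N^+, D, N^-$; the compatibility is exactly what ensures the assembly is a closed embedded submanifold of $\Gg$. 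Parts \textit{ii)}-\textit{iv)} follow directly from the construction: morphisms correspond to componentwise inclusion of normal subgroups; the source fiber at $x$ in the quotient $\Gg/\Nn$ has fundamental group equal to the isotropy $\Nn_x$, namely $K = \iota_*^{-1}(r_*K^\pm)$; and the restriction to $N^\pm$ gives $\Pi_1(N^\pm)/\Nn^\pm$ as in the log tangent case.

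The main obstacle is the third paragraph, specifically the verification that $\iota_*^{-1}(r_*K^+) = \iota_*^{-1}(r_*K^-)$ is both the necessary and sufficient combinatorial translation of the closed-smooth gluing of $\Nn|_{N^+}, \Nn|_D, \Nn|_{N^-}$. This requires a careful local analysis in a coordinate chart provided by the semidirect product structure $A = \RR^+ \ltimes \RR$, keeping track of the interplay between the $L$-holonomy, the affine $Q^+$-action, and the symplectic leaf foliation of $D$, so as to see exactly which approach directions in $\Gg$ over $N^\pm$ are captured by the closure $\thickbar{\Nn|_{N^\pm}}$ in the isotropy $\{1\} \times \pi_1(F_x, x)$ at a boundary point $x\in D$.
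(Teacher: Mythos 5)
Your overall strategy coincides with the paper's: write the ssc groupoid explicitly as the action groupoid $\Gg=\Hh\ltimes N$ and classify its closed, discrete, totally disconnected normal Lie subgroupoids via Theorem~\ref{Thm: Gpd-NormalSubgpd}. However, two steps are defective. First, your justification that $\Nn|_D$ lies in $\{1\}\times\pi_1(F,x)$ --- ``since $A$ is connected, any discrete subgroup \dots must be contained in $\{1\}\times\pi_1(F_x,x)$'' --- is false as stated: $A=\RR^+\ltimes\RR$ is connected but has plenty of nontrivial discrete subgroups (e.g.\ the cyclic group generated by a dilation), so connectedness alone rules out nothing. What does work is either the paper's dimension count (a point $(a,b,t,\gamma,0)\in\Nn|_D$ with $(a,b)\neq(1,0)$ has a neighbourhood in $\Gg$ meeting $\Nn$ only inside $\Gg|_D$, so $\Nn$ would be a $\dim M$--dimensional manifold whose source image near that point lies in the hypersurface $D$, contradicting that $s$ restricted to $\Nn$ is a local diffeomorphism onto open sets of $M$), or the algebraic observation that $\Nn_x$ is \emph{normal} in $\Gg_x\cong A\times\pi_1(F,x)$ and $A$ has no nontrivial discrete normal subgroups, since conjugating $(a,b)\neq(1,0)$ by translations or dilations sweeps out a continuum. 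Normality, not connectedness, is the operative hypothesis.

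Second, the limiting argument in your third paragraph is both misstated and, by your own admission, not carried out. Elements of $\Nn|_{N^+}$ with nonzero winding around $S^1_\lambda$ have $b\in\tfrac{\lambda}{r}\ZZ\setminus\{0\}$ and therefore escape to infinity as $r\to 0$; they do \emph{not} ``have a limit in $\Nn|_D$,'' and this causes no conflict with closedness. The correct bookkeeping is: closedness forces the limit points that do exist, namely the classes in $H^\pm=\iota_*^{-1}(r_*K^\pm)$ sitting at $b=0$, to lie in $\Nn|_D$, giving $H^\pm\subset H$; while the requirement that $\Nn$ be a submanifold on which $s$ restricts to a local diffeomorphism forces every point of $\Nn|_D$ to be approached from both $N^+$ and $N^-$, giving $H\subset H^+\cap H^-$. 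Together these yield $H=H^+=H^-$, i.e.\ $\iota_*^{-1}(r_*K^+)=\iota_*^{-1}(r_*K^-)$, and running the same two-sided analysis in reverse proves sufficiency. This is precisely the ``careful local analysis'' you defer to your final paragraph; since it is the entire content of part \textit{i)}, the proposal as written does not yet constitute a proof.
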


\begin{proof}
Using a trivialization, we may decompose $\tot(N)$ as follows:
\begin{align*}
\tot(N) & = (S^1_\lambda \ltimes_\varphi F) \times \RR \\
& = \{(t, x, r) ~|~ t\in S^1_\lambda, ~x\in F, ~r \in \RR, ~(t, \gamma, r) \sim (t+\lambda, \varphi(\gamma), r)\}.
\end{align*}
Similarly, we write the ssc integration $\Gg \rra N$ defined in~\eqref{eq: ssclogsympgpd} explicitly:
\begin{equation} \label{eq: expl G}
\begin{aligned}
\Gg & = (\Aa^+ \times_D (S^1_\lambda \ltimes_\varphi \Pi_1 F)) \ltimes N \\
& = \{(a, b, t, \gamma, r) ~|~ a\in \RR^+, ~b\in \RR, ~t\in S^1_\lambda, ~\gamma \in \Pi_1 F, ~r \in \RR, \\
& ~~~~~~~~~~~~~~~~~~~~~~~ (a, b, t, \gamma, r) \sim (a, b, t+\lambda, \varphi(\gamma), r)\},
\end{aligned}
\end{equation}
where the source and target maps to $\tot(N)$ are given by
\[
s: (a, b, t, \gamma, r) \mapsto (t, s_0(\gamma), r), ~~ t: (a, b, t, \gamma, r) \mapsto (t+br, t_0(\gamma), ar).
\]
By Theorem \ref{Thm: Gpd-NormalSubgpd}, it suffices to classify closed, discrete, totally disconnected, normal Lie subgroupoids of $\Gg \rra \tot(N)$.  So, let $\Nn$ be such a subgroupoid.

First note that the isotropy groups of the groupoid $\Gg$ are given as follows: at $x^+ = (t, x, r) \in N \setminus D$, we have $r > 0$ and the isotropy group of $\Gg$ is given by
\begin{equation} \label{eq: normal-iso'}
\Gg(x', x') = \left(\left(\{1\} \times \tfrac{\lambda}{r}\ZZ \times \{t\}\right) \ltimes \pi_1(F, x)\right) \times \{r\},
\end{equation}
while at $x = (t, x, 0) \in D$, the isotropy group of $\Gg$ is given by
\begin{equation} \label{eq: normal-iso}
\Gg(x, x) = \left(\left(\RR^+ \ltimes \RR) \times \{t\}\right) \times \pi_1(F, x)\right) \times \{0\}.
\end{equation}

For a point $p = (a, b, t, \gamma, 0) \in \Gg|_D$ such that $a \neq 1$ or $b \neq 0$, if we take a small neighbourhood $U_p$ around $p$, then $U_p \cap \Nn|_{N\setminus D} = \varnothing$. Since $\dim \Nn = \dim M$, it follows that $\dim \left(U_p \cap \Nn|_D\right) = \dim M$. However since $s: \Nn|_D \rightarrow D$ is a submersion, we must have that $\dim \left(\Nn \cap s^{-1}(s(p))\right) = 1$, which is an contradiction.

Therefore the subgroupoid $\Nn|_D$ must take the form 
\[
\Nn|_D = \left((\{1\} \times \{0\} \times S^1_\lambda) \ltimes \Hh\right) \times \{0\}.
\]
where $\Hh$ is the normal subgroupoid $\Pi_1 F$ induced by a normal subgroup $H \subset \pi_1(F, x)$. We denote the isotropy group of $\Nn$ at $x^+$ by $K^+$. By \eqref{eq: normal-iso'}, $K^+$ is a normal subgroup of $\tfrac{\lambda}{r}\ZZ \ltimes \pi_1(F, x)$. If we define $H^+ = K^+ \cap \pi_1(F, x) = \iota_*^{-1}(r_*K^+)$, which is a $\varphi$-invariant normal subgroup of $\pi_1(F, x)$, then as we take the limit $r \rightarrow 0$, the condition that $\Nn$ is closed implies $H^+ = H$. Similarly, we have $H^- = H$.

Since $H^+ = H = H^-$, we conclude that
\[
\iota_*^{-1}(r_*K^+) = \iota_*^{-1}(r_*K^-),
\]
obtaining \textit{i)}. The results \textit{ii)}, \textit{iii)} and \textit{iv)} follow from the construction.
\end{proof}

\begin{prop} \label{Prop: LocalSympIntegrationNorHausdorff}
Let $p: N \rightarrow D$, as defined above, be non-orientable, let $x\in D$ be a basepoint on the zero section, and let $\iota:F\hookrightarrow D$ be the inclusion of a symplectic leaf through $x$.  Choose a base point $x' \in N\setminus D$, and let $r=p|_{N\setminus D}$.
\begin{enumerate}
\item The Hausdorff integrations of $T^*_\pi N$ are classified by a normal subgroup $K' \subset \pi_1(N\setminus D, x')$.
\item Morphisms between integrations correspond to inclusions of associated normal subgroups.
\item The fundamental group of the source fiber at $x$ is isomorphic to $\iota_*^{-1}(r_*K')$.
\item Restricting the integration of $T^*_\pi N$ to $N\setminus D$, we obtain the integration of $T(N\setminus D)$ given by
\[
\Pi_1(N\setminus D) / \Nn'
\]
where $\Nn'$ is the unique closed, totally disconnected, normal Lie subgroupoid of $\Pi_1(N\setminus D)$ with isotropy $K'$ at $x'$.
\end{enumerate}
\end{prop}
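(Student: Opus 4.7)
The plan is to reduce to Proposition~\ref{Prop: LocalSympIntegrationHausdorff} via the orientation double cover technique, in direct parallel with the proof of Proposition~\ref{Prop: MonthubertLocalNOr}. Let $\varsigma:\til{D}\to D$ be the orientation double cover determined by the Stiefel--Whitney class $w_1(N)$, and set $\til{N}=\varsigma^*N$, which is an orientable line bundle over $\til{D}$. There is a free involution $\tau$ on $\tot(\til{N})$ whose quotient recovers $\tot(N)$, and in the coordinates of~\S\ref{Appen: LogSymp} this pullback trivializes the non-orientable factor (either $Q^{-}$ or the relevant Stiefel--Whitney class of $L$) so that Proposition~\ref{Prop: LocalSympIntegrationHausdorff} applies upstairs. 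The log symplectic structure $\pi$ lifts to a $\tau$-invariant log symplectic structure $\til{\pi}$ on $\til{N}$ with degeneracy locus $\til{D}$.

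The source-simply-connected integration $\til{\Gg}\rra\tot(\til{N})$ of $T^*_{\til{\pi}}\til{N}$ inherits a lifted $\tau$-action, and the ssc integration $\Gg$ of $T^*_{\pi} N$ is the quotient $\til{\Gg}/\tau$. By Theorem~\ref{Thm: Gpd-NormalSubgpd}, Hausdorff integrations of $T^*_{\pi} N$ correspond bijectively to closed, discrete, totally disconnected, normal Lie subgroupoids of $\Gg$, which pull back to $\tau$-invariant subgroupoids of the same type in $\til{\Gg}$. Applying Proposition~\ref{Prop: LocalSympIntegrationHausdorff} upstairs classifies these in terms of pairs $(\til{K}^+,\til{K}^-)$ of normal subgroups $\til{K}^\pm\subset\pi_1(\til{N}^\pm,\til{x}^\pm)$ satisfying the compatibility $\iota_*^{-1}(\til{r}_*\til{K}^+)=\iota_*^{-1}(\til{r}_*\til{K}^-)$ in the relevant $\pi_1(\til{F},\til{x})$.

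Since $N$ is non-orientable, the two components $\til{N}^\pm$ of $\til{N}\setminus\til{D}$ are swapped by $\tau$, and, $\tau$ acting freely, the projection $\til{N}^+\to N\setminus D$ is a diffeomorphism. This yields the identification $\pi_1(\til{N}^+,\til{x}^+)\cong \pi_1(N\setminus D,x')$, under which $\tau_*$ transports $\til{K}^+$ to $\til{K}^-$. Hence $\tau$-invariance forces $\til{K}^-=\tau_*\til{K}^+$, and the pair collapses to a single normal subgroup $K'\subset\pi_1(N\setminus D,x')$, proving part~\textit{i)}. By the naturality of $\tau_*$, the two sides of the upstairs compatibility condition become tautologically equal, so no extra constraint on $K'$ survives. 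Parts~\textit{ii)},~\textit{iii)}, and~\textit{iv)} then follow from the corresponding statements in Proposition~\ref{Prop: LocalSympIntegrationHausdorff}: morphisms and restrictions are detected after passing to the double cover, and the source fibre of $\Gg$ over $x$ is the $\tau$-quotient of the source fibre of $\til{\Gg}$ over $\til{x}$, which contributes the stated fundamental group $\iota_*^{-1}(r_*K')$.

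The main obstacle will be carefully verifying that the upstairs compatibility condition becomes vacuous under $\tau$-invariance. This requires tracking the $\tau$-action on symplectic leaves, which bifurcates into two subcases depending on whether the restriction of $N$ to a symplectic leaf $F\subset D$ is orientable: in one subcase $\tau$ swaps two disjoint copies of $F$ in $\til{D}$, and in the other it acts on a connected double cover of $F$. In each subcase one checks, using that $\iota$, $\til{r}$, and $\tau$ are compatible diffeomorphisms, that $\iota_*^{-1}(\til{r}_*\tau_*\til{K}^+)$ and $\iota_*^{-1}(\til{r}_*\til{K}^+)$ define the same subgroup once translated back to $\pi_1(N\setminus D,x')$, which is what allows the single-parameter family of $K'$ to be genuinely unconstrained.
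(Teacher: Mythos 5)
Your overall strategy is the same as the paper's: pass to the orientation double cover $\varsigma:\til{D}\to D$, apply Proposition~\ref{Prop: LocalSympIntegrationHausdorff} to $\til{N}=\varsigma^*N$, and cut down to the $\tau$-invariant closed, discrete, totally disconnected normal subgroupoids. You also correctly locate where the real work lies, but that is exactly where your argument has a gap: the claim that ``by the naturality of $\tau_*$, the two sides of the upstairs compatibility condition become tautologically equal'' is not a tautology, and the reason you give for it is not valid. Since $\tau$ covers the deck transformation $\tau_D$ of $\til{D}$, one has $\til{r}_*\circ\tau_*=(\tau_D)_*\circ\til{r}_*$; so writing $H=\til{r}_*\til{K}^+\trianglelefteq\pi_1(\til{D},\til{x})$, the condition to be checked is
\[
\til{\iota}_*^{-1}(H)\;=\;\til{\iota}_*^{-1}\bigl((\tau_D)_*H\bigr),
\]
where $(\tau_D)_*$ is conjugation by an element $g\in\pi_1(D,x)\setminus\ker w_1(N)$. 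A subgroup that is only normal in $\ker w_1(N)\cong\pi_1(N\setminus D,x')$ need not be preserved by such a conjugation, so the condition does not evaporate by naturality; it translates into the requirement that $\iota_*^{-1}(r_*K')$ be invariant under the residual $\ZZ/2$-action of $\pi_1(D,x)/\ker w_1$ on $\pi_1(F,x)$.

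This is a genuine constraint, not a formality. Take $F=T^2$ with $\varphi$ the symplectomorphism inducing $\varphi_*=\begin{pmatrix}0&-1\\1&0\end{pmatrix}$ on $\pi_1(F)=\ZZ^2$, let $D=S^1_\lambda\ltimes_\varphi F$ and $N=f^*Q^-$. Then $\pi_1(N\setminus D,x')\cong\ker w_1=\{(n,h):n\in 2\ZZ,\ h\in\ZZ^2\}$, and $K'=\{0\}\times(2\ZZ\oplus\ZZ)$ is normal there (conjugation by $(2,0)$ acts by $\varphi_*^2=-I$, which preserves $2\ZZ\oplus\ZZ$), yet $\iota_*^{-1}(r_*K')=2\ZZ\oplus\ZZ$ is not $\varphi_*$-invariant. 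For such a $K'$ the limit points of $\Nn|_{N\setminus D}$ along $r\to 0^+$ and $r\to 0^-$ in a local trivialization (the two local sides of $D$, which are joined inside the connected set $N\setminus D$ by a $w_1$-nontrivial loop) force incompatible isotropy over $D$, so no closed $\Nn$ with this boundary data exists. In other words, the verification you defer as ``the main obstacle'' is precisely where either an argument or an extra hypothesis on $K'$ is required; note that the paper's own proof asserts $\til{K}^+=\til{K}^-$ at the same point without confronting the discrepancy between $\tau_*$ and the identity once both groups are identified with $\pi_1(N\setminus D,x')$, so your proposal reproduces, rather than fills, this gap.
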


\begin{proof}
Using the same strategy as in the proof of Proposition \ref{Prop: MonthubertLocalNOr}, $N$ determines a double cover $\varsigma: \til{D} \rightarrow D$ and an involution $\tau$ on the trivial pullback $\til{N} = \varsigma^* N$, whose total space carries a $\tau$--invariant Poisson structure $\til{\pi}$. This induces an involution on the ssc integration $\til{\Gg} \rra \til{N}$ of $T^*_{\til{\pi}} \til{N}$.

We apply Proposition \ref{Prop: LocalSympIntegrationHausdorff} to $T^*_{\til{\pi}} \til{N}$ and seek the $\tau$-invariant, closed, discrete, totally disconnected, normal subgroupoids. Using the same notation as in Proposition \ref{Prop: LocalSympIntegrationHausdorff}, these subgroupoids are classified by pairs $(\til{K}^+, \til{K}^-)$ such that $\til{\iota}_*^{-1}(\til{r}_*\til{K}^+) = \til{\iota}_*^{-1}(\til{r}_*\til{K}^-)$, and the $\tau$-invariance implies $\til{K}^+ = \til{K}^-$. Identifying $\pi_1(\til{D}, \til{x})$ with $\pi_1(N\setminus D, x')$, and taking $K' = \til{K}^+$, we obtain \textit{i)}. The results \textit{ii)}, \textit{iii)} and \textit{iv)} follow from the construction.
\end{proof}

\bibliographystyle{hyperamsplain} 
\bibliography{SympGpd} 

\end{document}